\newtheorem{theorem}{Theorem}[section]
\newtheorem{corollary}[theorem]{Corollary}
\newtheorem{proposition}[theorem]{Proposition}
\theoremstyle{definition}
\newtheorem{definition}[theorem]{Definition}
\newtheorem{remark}[theorem]{Remark}
\newcommand{\R}{\mathbb{R}}
\newcommand{\N}{\mathbb{N}}
\newcommand{\C}{\mathbb{C}}
\newcommand{\T}{\mathbb{T}}
\newcommand{\Z}{\mathbb{Z}}
\newcommand{\Q}{\mathbb{Q}}
\def\XXint#1#2#3{{\setbox0=\hbox{$#1{#2#3}{\int}$ }
\vcenter{\hbox{$#2#3$ }}\kern-.6\wd0}}
\begin{document}

\title{Strichartz estimates for quasi-periodic functions and applications}

\author{Robert Schippa}
\address{UC Berkeley, Department of Mathematics, Evans Hall
Berkeley, CA 94720-3840}
\email{robert.schippa@gmail.com}

\keywords{Strichartz estimates, quasi-periodic functions, nonlinear Schr\"odinger equation}

\begin{abstract}
We show Strichartz estimates for quasi-periodic functions with decaying Fourier coefficients via $\ell^2$-decoupling. When we additionally average in time, further improvements can be obtained. Next, we apply multilinear refinements to show low regularity local well-posedness for nonlinear Schr\"odinger equations. For the cubic nonlinear Schr\"odinger equation the approach yields the sharp local well-posedness result.
\end{abstract}

\maketitle

\section{Introduction}

The local well-posedness of nonlinear dispersive equations with almost periodic initial data has recently received increased attention.
Two prominent models are the cubic nonlinear Schr\"odinger equation
\begin{equation}
\label{eq:CubicNLSIntro}
\left\{ \begin{array}{cl}
i \partial_t u + \partial_{x}^2 u &= \pm |u|^2 u, \quad (t,x) \in \R \times \R, \\
u(0) &= u_0
\end{array} \right.
\end{equation}
and the Korteweg-de Vries equation:
\begin{equation}
\label{eq:KdVIntro}
\left\{ \begin{array}{cl}
\partial_t u + \partial_x^3 u &= u \partial_x u , \quad (t,x) \in \R \times \R, \\
u(0) &= u_0.
\end{array} \right.
\end{equation}

Recall that a Bohr-almost periodic function \cite{Bohr1925} is obtained as a uniform limit of trigonometric polynomials. The set of almost periodic functions on $\R$ is denoted by $AP(\R)$. For these functions the Fourier series expansion holds in the $L^\infty$-norm:
\begin{equation}
\label{eq:FourierExpansionIntro}
f(x) = \sum_{\lambda \in \Lambda} e^{i \lambda x} a_\lambda, \quad \lambda \in \Lambda \subseteq \R: \text{ at most countably infinite}.
\end{equation}

T. Oh \cite{Oh2015} proved local well-posedness of \eqref{eq:CubicNLSIntro} in the Banach algebra of functions with absolutely summable Fourier coefficients. By local well-posedness we refer to existence, uniqueness, and continuous dependence of the solutions on the initial data. The result in \cite{Oh2015} was recently generalized by Papenburg \cite{Papenburg2024} who considered more general Banach algebras. Furthermore, Damanik \emph{et al.} \cite{DamanikLiXu2024A,DamanikLiXu2024B} considered power-type nonlinear Schr\"odinger equations with a nonlinearity at least quintic and the derivative nonlinear Schr\"odinger equation.

\smallskip

 Tsugawa \cite{Tsugawa2012} applied the Fourier restriction norm method due to Bourgain \cite{Bourgain1993A,Bourgain1993B} to show local well-posedness of \eqref{eq:KdVIntro}. He showed that his result is sharp in general in terms of Fourier decay.
Presently, we likewise aim to minimize the decay of the Fourier coefficients, which is required to prove local well-posedness. This can be viewed as a low regularity well-posedness theory for nonlinear dispersive equations with quasi-periodic initial data. The presented arguments generalize the theory in the periodic case, which was initiated in \cite{Bourgain1993A,Bourgain1993B}.

\smallskip

On the other hand, Chapouto--Killip--Vi\c{s}an \cite{ChapoutoKillipVisan2022} pointed out that \eqref{eq:KdVIntro} is ill-posed in $AP(\R)$ due to instant loss of continuity. This disproved the Deift conjecture \cite{Deift2017} stating that almost-periodic initial data to \eqref{eq:KdVIntro} yield solutions almost-periodic in time. Notably, an affirmative answer was given under more restrictive assumptions on the initial data by Damanik \emph{et al.}  \cite{DamanikGoldstein2016,BinderDamanikGoldsteinLukic2018,DamanikGoldstein2014}.
 A possible interpretation to \cite{ChapoutoKillipVisan2022} is that continuity is too strong an assumption for initial data to (nonlinear) dispersive equations. Instead we consider variants of the following norm:
\begin{equation*}
\| f \|_{\mathcal{L}_x^2}^2 = \lim_{L \to \infty} \frac{1}{2L} \int_{[-L,L]} |f(x)|^2 dx = \mathcal{M}(|f|^2).
\end{equation*}
The set of functions $f(x) = \sum_{\lambda \in \Lambda} a_\lambda e^{i \lambda x}$ with absolutely summable Fourier coefficients $(a_\lambda)_{\lambda \in \Lambda} \in \ell^1 $ is denoted with $\mathcal{A}_\omega$. For these functions we have
\begin{equation*}
\| f \|_{\mathcal{L}_x^2} = \big( \sum_{\lambda \in \Lambda} |a_\lambda|^2 \big)^{\frac{1}{2}}.
\end{equation*}
For $f \in \mathcal{L}^2_x(\R)$ we let $\sigma(f) = \{ \lambda \in \R^d : \mathcal{M}(f e^{-i \lambda \cdot})\neq 0 \}$. We define the space of Besicovitch-almost periodic functions $\mathcal{B}_2(\R)$ as closure of trigonometric polynomials under the $\mathcal{L}^2_x$-norm. \eqref{eq:FourierExpansionIntro} holds in $\mathcal{L}^2_x(\R)$.
 The fact that solutions to \eqref{eq:CubicNLSIntro} and \eqref{eq:KdVIntro} for real-valued initial data satisfy the formal conservation law
\begin{equation*}
\| u(t) \|_{\mathcal{L}_x^2} = \| u_0 \|_{\mathcal{L}_x^2}
\end{equation*}
indicates that an $\mathcal{L}^2_x$-based space is more appropriate to solve the quasiperiodic Cauchy problem.

\smallskip

For decaying initial data on the real line or periodic initial data, dispersive effects are the bedrock to establish a low regularity well-posedness theory.
A manifestation of dispersive effects are Fourier extension inequalities, which are also known as Strichartz estimates.

For linear Schr\"odinger equations on the real line these estimates (see \cite{KeelTao1998} and references therein) read
\begin{equation*}
\| e^{it \partial_x^2} u_0 \|_{L^p_t(\R;L^q_x(\R))} \lesssim \| u_0 \|_{L^2_x(\R)}, \quad p,q \geq 2, \quad \frac{2}{p} + \frac{1}{q} = \frac{1}{2}.
\end{equation*}
With this at hand, it is straight-forward to solve \eqref{eq:CubicNLSIntro} via the contraction mapping principle and obtain solutions with analytic dependence on the initial data in $L^2(\R)$. Since the mass is conserved, the local well-posedness theory is readily iterated to prove global well-posedness.

\smallskip

Due to weaker dispersive effects on the circle, less is known about Strichartz estimates for periodic functions. It still holds the $L^4_{t,x}$-Strichartz estimate without derivative loss (cf. \cite{Bourgain1993A,Zygmund1974}):
\begin{equation}
\label{eq:PeriodicL4StrichartzIntroduction}
\| e^{it \partial_x^2} u_0 \|_{L_t^4([0,1],L_x^4(\T))} \lesssim \| u_0 \|_{L^2(\T)}.
\end{equation}

\medskip

Here we point out how the C\'ordoba--Fefferman square function estimate \cite{Cordoba1979,Cordoba1982,Fefferman1973} and decoupling arguments due to Bourgain--Demeter \cite{BourgainDemeter2015} yield Strichartz estimates for linear solutions with quasi-periodic initial data. Here it becomes important to approximate an exponential sum with an oscillatory integral. The latter is amenable to decoupling. This approach of estimating exponential sums goes back to Bourgain \cite{Bourgain2013}. We also remark that substitutes of Strichartz estimates in $L^p$-based spaces for $p>2$, again relating to initial data with less decay compared to $L^2$-based spaces, were previously investigated in \cite{Schippa2022SlowDecay,RozendaalSchippa2023}.
 
\smallskip 
 
Let $f \in \mathcal{B}_2(\R)$ be a quasi-periodic function with $\sigma(f) \subseteq \Lambda $. Here $\nu \in \N$, and suppose that $\omega \in \R^{\nu}_{>0}$ is \emph{non-resonant}, i.e., $\{ \omega_1,\ldots,\omega_{\nu} \}$ are linearly independent over $\Q$. Let $\Lambda = \omega_1 \Z + \ldots + \omega_{\nu} \Z$ and denote by $\langle n \rangle_\omega = n \cdot \omega$ for $n \in \Z^{\nu}$. We have the representation in $\mathcal{L}^2_x$:
\begin{equation*}
f(x) = \sum_{n \in \Z^{\nu}} e^{i \langle n \rangle_\omega \cdot x} \hat{f}(\langle n \rangle_\omega).
\end{equation*}

\smallskip

We quantify the Fourier decay in terms of the height $|n|$ of the Fourier coefficients, which leads us to the following definition of Sobolev-type spaces:
\begin{equation*}
\| f \|^2_{\mathcal{H}^s_{\Lambda}} = \sum_{n \in \Z^{\nu}} \langle n \rangle^{2s} |\hat{f}(\langle n \rangle_\omega)|^2.
\end{equation*}

\medskip

For $C,N \in 2^{\N_0}$ let $R_C$ denote the projection to frequencies of height $C$ and $P_N$ denote the projection to frequencies of size $N$:
\begin{equation*}
R_C f = \sum_{n \in \Z^{\nu}} \chi_C(n) e^{i \langle n \rangle_\omega \cdot x} \hat{f}(\langle n \rangle_\omega), \quad P_N f = \sum_{n \in \Z^{\nu}} \chi_N(\langle n \rangle_\omega) e^{i \langle n \rangle_\omega \cdot x} \hat{f}(\langle n \rangle_\omega).
\end{equation*}
Above $\chi_A$ denotes a function, which smoothly localizes to size $A$.

\smallskip

To obtain an $L^4$-Strichartz estimate for quasi-periodic functions, we need to count the number of frequencies $\langle n \rangle_\omega$ contained in an interval $I$ of length $\sim 1$: A counting argument based on the rank of the lattice gives
\begin{equation*}
\# \{ n \in \Z^{\nu}: |n| \sim C, \; \langle n \rangle_\omega \in I \} \lesssim C^{\nu-1}.
\end{equation*}
%
%
%

We define the density parameter as $b=\nu-1$.

\medskip

The generalization of the estimate \eqref{eq:PeriodicL4StrichartzIntroduction} reads
\begin{equation}
\label{eq:QPStrichartzEstimateIntroduction}
\| e^{it \partial_x^2} R_C f \|_{L_t^4([0,1],\mathcal{L}^4_x)} \lesssim_\varepsilon C^{\frac{b}{4}+\varepsilon}  \| f \|_{\mathcal{L}^2_x}.
\end{equation}
The above estimate is obtained in two steps:
\begin{itemize}
\item[(i)] The decoupling or square function estimate provides us with an almost orthogonal frequency decomposition of $\langle n \rangle_\omega$ into unit intervals. 
\item[(ii)] In the second step we use a bound for the height $|n| \leq C$ and a counting argument to estimate the number of frequencies $\langle n \rangle_\omega$ contained in a unit interval.
\end{itemize}
The first step trivializes the time-evolution on a scale determined by the frequency $\langle n \rangle_\omega$, but we can have many frequencies with a height $| n  | \gg |\langle n \rangle_{\omega}|$. This creates a mismatch between the almost orthogonal decomposition provided by the square function estimate and the number of frequencies within the intervals, which is divergent from the periodic case. Still it appears that the arguments optimally take advantage of the time-oscillation and the density of frequencies. We give a simple example, which shows sharpness of \eqref{eq:QPStrichartzEstimateIntroduction}. 

\medskip

 We show the following:
\begin{theorem}
\label{thm:L4StrichartzFixedTime}
Let $T \in (0,1]$, and $\sigma(u_0) \subseteq \Lambda = \omega \cdot \Z^{\nu}$ with $\omega \in \R^{\nu}_{>0}$ non-resonant, and $b=\nu-1$. Then the following estimates hold:
\begin{align}
\label{eq:FixedTimeL4StrichartzSEQIntro}
\| e^{it \partial_x^2} u_0 \|_{L_t^4([0,T],\mathcal{L}^4_x(\R))} &\lesssim T^{\frac{1}{8}} \| u_0 \|_{\mathcal{H}^s_{\Lambda}} \text{ for } s > \frac{b}{4}, \\
\label{eq:FixedTimeL4StrichartzAiryIntro}
\| e^{t \partial_x^3} P_N R_C u_0 \|_{L_t^4([0,T],\mathcal{L}^4_x(\R))} &\lesssim T^{\frac{1}{4}} \langle T^{-\frac{1}{2}} N^{-\frac{1}{2}} \rangle^{\frac{1}{4}} C^{\frac{b}{4}} \| u_0 \|_{\mathcal{L}^2_x} \text{ for } 1 \leq N \lesssim C.
\end{align}
\end{theorem}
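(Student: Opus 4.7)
The plan is to implement the two-step strategy previewed just before the statement: first decouple the dispersive exponential sum at unit scale in the $\langle n \rangle_\omega$-variable via $\ell^2$-decoupling for the underlying curve (parabola for Schr\"odinger, cubic for Airy), then use the counting bound $\#\{n \in \Z^\nu : |n| \sim C, \langle n \rangle_\omega \in I\} \lesssim C^b$ for unit intervals $I$. The decoupling is transferred from the Euclidean Bourgain--Demeter theorem to the quasi-periodic setting by approximating the trigonometric sum with an oscillatory integral on a large physical box $[-L,L]$ and passing to the limit $L \to \infty$, as recalled in the introduction.

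Concretely, for \eqref{eq:FixedTimeL4StrichartzSEQIntro}, a Littlewood--Paley decomposition together with the strict inequality $s > b/4$ reduces matters to the single-shell estimate
$$\|e^{it\partial_x^2} R_C u_0\|_{L^4_t([0,T],\,\mathcal{L}^4_x)} \lesssim_\varepsilon T^{1/8}\, C^{b/4+\varepsilon}\, \|R_C u_0\|_{\mathcal{L}^2_x}.$$
Expanding $\|e^{it\partial_x^2} R_C u_0\|_{\mathcal{L}^4_x}^4$ via the mean value and using non-resonance of $\omega$ reduces the fourth moment to a sum over quadruples $(n_1,n_2,n_3,n_4)$ with $n_1 + n_3 = n_2 + n_4$, weighted by the modulation factor $e^{-it(\langle n_1\rangle_\omega^2 - \langle n_2\rangle_\omega^2 + \langle n_3\rangle_\omega^2 - \langle n_4\rangle_\omega^2)}$. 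This quantity has the algebraic structure of an ordinary Euclidean $L^4_{t,x}$-norm and is therefore amenable to $\ell^2$-decoupling for the parabola at unit scale. Partitioning the $\langle n\rangle_\omega$ into unit intervals $I_k$ with corresponding pieces $F_k$ of $F = e^{it\partial_x^2} R_C u_0$, decoupling yields
$$\|F\|_{L^4_t \mathcal{L}^4_x}^2 \lesssim_\varepsilon C^\varepsilon \sum_k \|F_k\|_{L^4_t \mathcal{L}^4_x}^2.$$
Each $F_k$ has at most $C^b$ active Fourier modes by the counting bound, producing the factor $C^{b/4}$ after Cauchy--Schwarz; the $T^{1/8}$ factor then arises by interpolating a trivial $T^{1/4}$ H\"older bound in time against the $T$-independent $L^4_{t,x}$-Strichartz on $[0,1]$. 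Summing via $\sum_k \|F_k\|_{\mathcal{L}^2_x}^2 = \|R_C u_0\|_{\mathcal{L}^2_x}^2$ closes the estimate.

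For \eqref{eq:FixedTimeL4StrichartzAiryIntro} the strategy is identical, with the parabola replaced by the cubic curve $(\xi,\xi^3)$. Since the curvature of $\xi \mapsto \xi^3$ at height $N$ is of order $N$, $\ell^2$-decoupling operates naturally at cap scale $\delta = \min(1,(TN)^{-1/2})$, and the factor $\langle T^{-1/2} N^{-1/2}\rangle^{1/4}$ is precisely the ratio $(1/\delta)^{1/4}$ between unit scale and cap scale; each cap still contains at most $C^b$ frequencies (the counting bound is monotone in interval length when $\delta \leq 1$), and $T^{1/4}$ is the standard H\"older-in-$t$ loss on $L^4_t([0,T])$.

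The main obstacle is to execute the decoupling step rigorously in the quasi-periodic setting: the Bourgain--Demeter $\ell^2$-decoupling is a Euclidean statement, so one must approximate the mean $\mathcal{L}^4_x$-norm by $L^4$-integrals on boxes $[-L,L]$ large enough to resolve the curvature of the curve (roughly $L \gtrsim C^2$ for the parabola and $L \gtrsim \delta^{-2} N^{-1}$ for the cubic), apply the theorem there with uniform control of the $\varepsilon$-loss as $L \to \infty$, and only then invoke the counting bound. Once this transfer is in place, both \eqref{eq:FixedTimeL4StrichartzSEQIntro} and \eqref{eq:FixedTimeL4StrichartzAiryIntro} reduce to elementary H\"older and Cauchy--Schwarz manipulations together with the density estimate $C^b$.
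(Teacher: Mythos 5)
There is a genuine gap at your central decoupling step. With time restricted to $[0,T]$, $T\le 1$, neither Bourgain--Demeter decoupling nor the C\'ordoba--Fefferman square function estimate yields a decomposition of $e^{it\partial_x^2}R_Cu_0$ into \emph{unit} frequency intervals with constant uniform in $T$: after localizing to $|\xi|\sim N$ and rescaling to the unit parabola, the admissible thickening is $\delta=(N^2T)^{-1}$, so the finest cap width available over $[0,T]$ corresponds to intervals of length $T^{-\frac12}\ge 1$ in the original frequency variable (and $T^{-\frac12}N^{-\frac12}$ for the Airy flow). That coarser scale is exactly what the paper works with: it decomposes into $T^{-\frac12}$-intervals $I_T$ via the square function estimate (after the weight-function approximation and $L\to\infty$), trivializes the evolution on each $I_T$ by Galilean invariance, and counts $T^{-\frac12}C^b$ frequencies per interval, giving the per-interval bound $T^{\frac14}\cdot(T^{-\frac12}C^b)^{\frac14}=T^{\frac18}C^{\frac b4}$. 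Your substitute for this --- recovering $T^{\frac18}$ by interpolating the trivial H\"older bound against the $T$-independent estimate on $[0,1]$ --- does not close: the trivial bound for a full dyadic block is $T^{\frac14}(NC^b)^{\frac14}\|u_0\|_{\mathcal{L}^2_x}$ (a block carries $\sim NC^b$ modes, not $C^b$), so the geometric mean with $C^{\frac b4+\varepsilon}\|u_0\|_{\mathcal{L}^2_x}$ produces an extra $N^{\frac18}\lesssim C^{\frac18}$ and only proves \eqref{eq:FixedTimeL4StrichartzSEQIntro} for $s>\frac b4+\frac18$; if instead you interpolate per unit block (where the count is $C^b$) you are back to needing the unit-scale decomposition over $[0,T]$ that is not justified.

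Two further issues concern the Airy bound. First, \eqref{eq:FixedTimeL4StrichartzAiryIntro} has no $\varepsilon$-loss, so $\ell^2$-decoupling (which always costs $\delta^{-\varepsilon}$) cannot be the tool; the paper uses the loss-free square function estimate of Theorem \ref{thm:CFSquareFunctionEstimateCurvature}, applicable after restricting to $|\xi|\sim N$ and rescaling, where the cubic has nondegenerate curvature. Second, your accounting of $\langle T^{-\frac12}N^{-\frac12}\rangle^{\frac14}$ is off: the caps forced by the time localization have length $T^{-\frac12}N^{-\frac12}$, and when $TN\le 1$ this exceeds $1$, so a cap contains up to $\langle T^{-\frac12}N^{-\frac12}\rangle C^b$ frequencies; the factor comes from this enlarged count, not from a ratio of scales, and your claim that each cap still contains at most $C^b$ frequencies fails in that regime. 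Your reduction to dyadic shells and the unit-interval counting bound are the right ingredients, but the argument needs the paper's structure: square-function decomposition at the time-limited scale, Galilean trivialization on each interval, counting proportional to the interval length, and the separate short-time regimes $T\le N^{-2}$ (respectively $T\le N^{-3}$) handled by H\"older and Bernstein.
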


\medskip
 
The arguments can clearly cover more general dispersion relations. The above estimates extend the Strichartz estimates for periodic solutions to one-dimensional Schr\"odinger and Airy equations due to Bourgain \cite{Bourgain1993A,Bourgain1993B}. The higher-dimensional case, also on irrational tori was settled as a consequence of sharp $\ell^2$-decoupling due to Bourgain--Demeter \cite{BourgainDemeter2015}. We rely on decoupling to show Strichartz estimates for quasi-periodic functions in higher dimensions; see Theorem \ref{thm:FixedTimeStrichartzGeneral}. 
 
\medskip

A different type of Strichartz estimate for quasi-periodic functions with additional time averaging was proved last year by Klaus \cite{Klaus2023}:
\begin{equation*}
\| e^{it \partial_x^2} f \|_{\mathcal{L}_{t,x}^4(\R \times \R)} \lesssim \| f \|_{\mathcal{L}^2_x(\R)}.
\end{equation*} 
He considered the special case of quasi-periodic functions $f \in \mathcal{B}_2(\R)$ with $\sigma(f) \subseteq \Z + \sqrt{2} \Z$. The proof relied on a counting argument similar to the ones from Bourgain \cite{Bourgain1993A}.
With additional time averaging, the almost orthogonal decomposition in frequencies $\langle n \rangle_\omega$ can be taken arbitrarily small. We show the following Strichartz estimates for Besicovitch-almost periodic functions:
\begin{theorem}
\label{thm:AveragedL4Strichartz}
Let $u_0 \in \mathcal{B}_2(\R)$. The following Strichartz estimates hold:
\begin{align}
\label{eq:L4StrichartzSEQ}
\| e^{it \partial_x^2} u_0 \|_{\mathcal{L}^4_{t,x}(\R \times \R)} &\lesssim \| u_0 \|_{\mathcal{L}_x^2(\R)}, \\
\label{eq:L4StrichartzAiry}
\| e^{t \partial_x^3} u_0 \|_{\mathcal{L}^4_{t,x}(\R \times \R)} &\lesssim \| u_0 \|_{\mathcal{L}_x^2(\R)}.
\end{align}
\end{theorem}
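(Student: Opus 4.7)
The plan is to reduce to trigonometric polynomials by density in $\mathcal{B}_2(\R)$, expand the fourth power of the linear evolution as a quadruple Fourier series in $(t,x)$, and invoke the Besicovitch space-time mean $\mathcal{M}_{t,x}$ to annihilate every non-resonant exponential. Concretely, for $u_0=\sum_{\lambda\in F}a_\lambda e^{i\lambda x}$ with $F\subset\R$ finite, this produces the identities
\begin{align*}
\|e^{it\partial_x^2}u_0\|_{\mathcal{L}^4_{t,x}}^4 &= \sum_{\mathcal{R}_2}a_{\lambda_1}\overline{a_{\lambda_2}}a_{\lambda_3}\overline{a_{\lambda_4}},\\
\|e^{t\partial_x^3}u_0\|_{\mathcal{L}^4_{t,x}}^4 &= \sum_{\mathcal{R}_3}a_{\lambda_1}\overline{a_{\lambda_2}}a_{\lambda_3}\overline{a_{\lambda_4}},
\end{align*}
where $\mathcal{R}_k\subset F^4$ consists of tuples satisfying $\lambda_1-\lambda_2+\lambda_3-\lambda_4=0$ together with the phase condition $\lambda_1^k-\lambda_2^k+\lambda_3^k-\lambda_4^k=0$.

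For \eqref{eq:L4StrichartzSEQ}, I would set $s=\lambda_1+\lambda_3=\lambda_2+\lambda_4$; the quadratic phase constraint then yields $\lambda_1\lambda_3=\lambda_2\lambda_4$, so that $\{\lambda_1,\lambda_3\}$ and $\{\lambda_2,\lambda_4\}$ are both root sets of the same quadratic $X^2-sX+\lambda_1\lambda_3$ and coincide as unordered pairs. Hence each ordered pair $(\lambda_1,\lambda_3)$ admits at most two resonant choices of $(\lambda_2,\lambda_4)$, each contributing $|a_{\lambda_1}|^2|a_{\lambda_3}|^2$, so
\begin{equation*}
\|e^{it\partial_x^2}u_0\|_{\mathcal{L}^4_{t,x}}^4 \leq 2\Bigl(\sum_\lambda|a_\lambda|^2\Bigr)^2 = 2\|u_0\|_{\mathcal{L}^2_x}^4.
\end{equation*}

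For \eqref{eq:L4StrichartzAiry}, I would exploit the identity $a^3+c^3=(a+c)^3-3ac(a+c)$ to reduce the cubic phase constraint, in tandem with the sum constraint, to the factorization $s(\lambda_1\lambda_3-\lambda_2\lambda_4)=0$. Thus $\mathcal{R}_3$ splits into a ``permutation'' subset $\{\lambda_1,\lambda_3\}=\{\lambda_2,\lambda_4\}$, bounded as above by $2\|u_0\|_{\mathcal{L}^2_x}^4$, and a ``degenerate'' subset with $s=0$, i.e., $\lambda_3=-\lambda_1$ and $\lambda_4=-\lambda_2$, whose contribution factors as
\begin{equation*}
\Bigl|\sum_\lambda a_\lambda a_{-\lambda}\Bigr|^2 \leq \Bigl(\sum_\lambda|a_\lambda|^2\Bigr)^2 = \|u_0\|_{\mathcal{L}^2_x}^4
\end{equation*}
by Cauchy--Schwarz. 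Adding the two contributions yields \eqref{eq:L4StrichartzAiry}.

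The main obstacle is not the algebraic resonance analysis above (which collapses the number-theoretic divisor count from the periodic case of \cite{Bourgain1993A} to an elementary polynomial factorization, precisely because $\sigma(u_0)$ carries no a priori lattice structure), but rather the clean verification that $\mathcal{M}_{t,x}$ realizes the stated Parseval-type identity for the quartic quantity and that both estimates extend from trigonometric polynomials to all of $\mathcal{B}_2(\R)$. Both facts follow from standard properties of the Besicovitch mean on trigonometric polynomials in $\R^{1+1}$ together with the $\mathcal{L}^2_x$-isometry of the free propagators; once the map $u_0\mapsto e^{it\partial_x^2}u_0$ is seen to extend continuously from trigonometric polynomials, equipped with the $\mathcal{L}^2_x$-norm, into $\mathcal{L}^4_{t,x}(\R\times\R)$, the quartic identity passes to the limit by routine approximation, and similarly for the Airy evolution.
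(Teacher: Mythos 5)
Your argument is correct, but it follows a genuinely different route from the paper. The paper proves Theorem \ref{thm:AveragedL4Strichartz} by a Littlewood--Paley decomposition (Proposition \ref{prop:LittlewoodPaley}), rescaling each dyadic block to unit frequencies, dominating the truncated space-time integral by weights with compactly supported Fourier transform, and then applying the C\'ordoba--Fefferman square function estimate (including the cubic variant, Theorem \ref{thm:CFSquareFunctionCubic}, for the Airy flow at unit frequency): since a trigonometric polynomial has finitely many frequencies, $L$ can be chosen so large that the caps of size $\delta^{\frac12}$ (resp.\ $\delta^{\frac13}$), $\delta \sim L^{-1}$, each contain $O(1)$ frequencies, which trivializes the evolution and yields the $\ell^2$ bound. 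You instead expand the fourth power, use that the full space-time mean annihilates every non-exact resonance, and solve the resonance system $\lambda_1+\lambda_3=\lambda_2+\lambda_4$, $\lambda_1^k+\lambda_3^k=\lambda_2^k+\lambda_4^k$ exactly by Vieta: for $k=2$ only permutations survive, and for $k=3$ there is in addition the family $\lambda_1+\lambda_3=0=\lambda_2+\lambda_4$, which you correctly isolate and control by Cauchy--Schwarz -- this degenerate family reflects the vanishing of the Airy resonance $3(\xi_1+\xi_2)\xi_1\xi_2$ and a naive ``permutations only'' claim would be false there. Your route is more elementary (no square function estimate, no Littlewood--Paley, no rescaling) and shows that with averaging in both variables the counting argument of Klaus collapses to polynomial algebra valid for arbitrary, not necessarily lattice, spectra; the paper's route has the advantage that the same scheme transfers to the finite-time estimates of Theorem \ref{thm:L4StrichartzFixedTime}, where only approximate resonances are available and counting inside caps is genuinely needed, and to higher dimensions via $\ell^2$-decoupling (Theorems \ref{thm:FixedTimeStrichartzGeneral} and \ref{thm:AveragedStrichartzHigh}), settings in which exact-resonance orthogonality gives nothing. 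Two minor points: in the Airy case the permutation and degenerate families overlap, so estimate the two contributions by absolute values rather than treating them as a partition; and the reduction to trigonometric polynomials and passage to general $u_0\in\mathcal{B}_2(\R)$ is the same density/limiting step the paper itself invokes, so it is acceptable at the paper's level of rigor.
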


Klaus explained why \eqref{eq:L4StrichartzSEQ} does not imply local well-posedness of \eqref{eq:CubicNLSIntro} in $\mathcal{L}_x^2$. The problem is that applying the $TT^*$-argument we find with
\begin{equation*}
T: \mathcal{L}_x^2(\R) \to \mathcal{L}^4_{t,x}(\R \times \R), \quad u_0 \mapsto e^{it \partial_x^2} u_0
\end{equation*}
that
\begin{equation*}
T^*: \mathcal{L}^{\frac{4}{3}}_{t,x}(\R \times \R) \to \mathcal{L}^2_x(\R), \quad F \mapsto \lim_{L \to \infty} \frac{1}{2L} \int_{[-L,L]} e^{-is \partial_x^2} F(s) ds.
\end{equation*}
Consequently,
\begin{equation*}
TT^*: \mathcal{L}^{\frac{4}{3}}_{t,x}(\R \times \R) \to \mathcal{L}^4_{t,x}(\R \times \R), \quad F \mapsto \lim_{L \to \infty} \frac{1}{2L} \int_{[-L,L]} e^{i(t-s) \partial_x^2} F(s) ds.
\end{equation*}
This $TT^*$-operator is not suitable for estimating the Duhamel formula:
\begin{equation*}
u(t) = e^{it \partial_x^2} u_0 - i \int_0^t e^{i(t-s) \partial_x^2} (|u|^2 u)(s) ds.
\end{equation*}

\medskip

It is the preceding observation, which suggests to study the Strichartz estimates from Theorem \ref{thm:L4StrichartzFixedTime}. We remark that the Strichartz estimates from Theorem \ref{thm:AveragedL4Strichartz} extend Strichartz estimates for periodic functions, too: Let $f \in L^2(\T)$. Then $e^{it \partial_x^2} f$ is time-periodic, and we have
\begin{equation*}
\| e^{it \partial_x^2} f \|_{\mathcal{L}^4_{t,x}(\R^2)} = \| e^{it \partial_x^2} f \|_{L^4_{t,x}(\T^2)} \lesssim \| f \|_{\mathcal{L}^2_x(\R)} = \| f \|_{L^2(\T)},
\end{equation*}
which recovers \eqref{eq:PeriodicL4StrichartzIntroduction}.

 Applying $\ell^2$-decoupling as a substitute for the square function estimate, we show Strichartz estimates in higher dimensions with additional time averaging for quasi-periodic functions in $\mathcal{H}^\varepsilon_\Lambda$ in Theorem \ref{thm:AveragedStrichartzHigh} with frequency-vector satisfying a diophantine condition.

\medskip

Finally, we apply the Strichartz estimates to show new low regularity well-posedness of nonlinear dispersive equations with quasi-periodic initial data. 
%
We show the following low regularity local well-posedness result for the nonlinear Schr\"o\-dinger equation:
\begin{equation}
\label{eq:NLSIntroII}
\left\{ \begin{array}{cl}
i \partial_t u + \partial_{x}^2 u &= \pm |u|^2 u , \quad (t,x) \in \R \times \R, \\
u(0) &= f \in \mathcal{H}^s_{\Lambda}(\R).
\end{array} \right.
\end{equation}
By local well-posedness we refer to existence, uniqueness, and continuous dependence of the solution on the initial data.

\begin{theorem}[Local well-posedness of NLS with quasi-periodic initial data]
\label{thm:LWPNLS}
Let $\nu \in \N$, and suppose that $\omega \in \R^{\nu}_{>0}$ is non-resonant. Let $\Lambda = \omega \cdot \Z^{\nu}$ with $b=\nu-1$. Then \eqref{eq:NLSIntroII} is locally well-posed for $s > \frac{b}{2}$.
\end{theorem}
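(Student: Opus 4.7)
I plan to adapt Bourgain's Fourier restriction norm method to the quasi-periodic setting. First, I define the $X^{s,b}$-type norm
\begin{equation*}
\| u \|_{X^{s, b}_\Lambda}^2 = \sum_{n \in \Z^\nu} \langle n \rangle^{2s} \int_\R \langle \tau + \langle n \rangle_\omega^2 \rangle^{2b} |\tilde u(\tau, n)|^2 \, d\tau,
\end{equation*}
where $\tilde u$ denotes the space-time Fourier transform adapted to the lattice $\Lambda$. Time-localization with a smooth cutoff $\eta_T(t)$ yields the standard linear estimates
\begin{equation*}
\| \eta_T\, e^{it \partial_x^2} u_0 \|_{X^{s, 1/2+\delta}_\Lambda} \lesssim \| u_0 \|_{\mathcal{H}^s_\Lambda}
\end{equation*}
and the Duhamel estimate
\begin{equation*}
\Big\| \eta_T \int_0^t e^{i(t-t')\partial_x^2} F(t') \, dt' \Big\|_{X^{s, 1/2+\delta}_\Lambda} \lesssim T^\theta \| F \|_{X^{s, -1/2+\delta}_\Lambda}
\end{equation*}
for some $\theta > 0$; these transfer from the periodic theory without substantial modification. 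A contraction argument in $X^{s, 1/2+\delta}_T$ then reduces Theorem \ref{thm:LWPNLS} to the trilinear bound
\begin{equation*}
\| |u|^2 u \|_{X^{s, -1/2+\delta}_\Lambda} \lesssim \| u \|_{X^{s, 1/2+\delta}_\Lambda}^3.
\end{equation*}

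To establish the trilinear estimate I dualize against $v \in X^{-s, 1/2-\delta}_\Lambda$, decompose dyadically in the height variable $u_i = R_{C_i} u$ for $i=1,2,3$ and $u_4 = R_{C_4} v$, and estimate the resulting four-linear form $\mathcal{M}_x \langle u_1 \bar u_2 u_3, u_4 \rangle$. Non-resonance of $\omega$ over $\Q$ forces $n_1 - n_2 + n_3 - n_4 = 0$ in $\Z^\nu$ on the support of the integrand, so $C_4 \lesssim \max(C_1, C_2, C_3)$. H\"older in space-time combined with the fixed-time $L^4$-Strichartz estimate \eqref{eq:FixedTimeL4StrichartzSEQIntro} applied to each factor yields the baseline bound
\begin{equation*}
\big| \mathcal{M}_x \langle u_1 \bar u_2 u_3, u_4 \rangle \big| \lesssim T^{1/2} \prod_{i=1}^{4} C_i^{b/4+\varepsilon} \, \| u_i \|_{X^{0, 1/2+\delta}_\Lambda},
\end{equation*}
and after redistributing Sobolev weights $\| u_i \|_{X^{0, 1/2+\delta}} \sim C_i^{-s_i} \| u_i \|_{X^{s_i, 1/2+\delta}}$ with $s_i = s$ for $i=1,2,3$ and $s_4 = -s$, the problem reduces to summing
\begin{equation*}
C_1^{b/4+\varepsilon-s} \, C_2^{b/4+\varepsilon-s} \, C_3^{b/4+\varepsilon-s} \, C_4^{b/4+\varepsilon+s}
\end{equation*}
across dyadic scales under the convolution constraint.

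The critical balance appears in the case $C_1 \sim C_2 \sim C_3 \sim C_4 = C$: the exponents collapse to $C^{b+4\varepsilon-2s}$, and a four-fold H\"older together with the embedding $\ell^2 \hookrightarrow \ell^4$ of the dyadic norms closes the sum precisely when $s > b/2$, which is the threshold of the theorem. The main obstacle is the unbalanced high-height configuration $C_1 \sim C_4 \gg C_2, C_3$, where the naive bound produces a surplus factor $C_1^{b/2+2\varepsilon}$ that cannot be absorbed into the $\mathcal{H}^s_\Lambda$-weight on $u_1$ or $v$ separately. I plan to close this case through a multilinear refinement driven by the resonance identity
\begin{equation*}
\langle n_4 \rangle_\omega^2 - \langle n_1 \rangle_\omega^2 + \langle n_2 \rangle_\omega^2 - \langle n_3 \rangle_\omega^2 = -2 (\langle n_1 \rangle_\omega - \langle n_2 \rangle_\omega)(\langle n_2 \rangle_\omega - \langle n_3 \rangle_\omega),
\end{equation*}
which guarantees that on genuinely four-wave interactions at least one modulation weight $\langle \tau_i + \langle n_i \rangle_\omega^2 \rangle$ is large, yielding a modulation gain in the $X^{s, 1/2+\delta}$-norms that cancels the surplus. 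The strictly resonant contributions $n_1 = n_2$ or $n_2 = n_3$ collapse to diagonal sums, handled by a Wick-type gauge transformation using $\mathcal{L}^2_x$-conservation of mass. Once the sharp trilinear estimate is in hand, Picard iteration in $X^{s, 1/2+\delta}_T$ for small $T = T(\| u_0 \|_{\mathcal{H}^s_\Lambda})$ yields existence, uniqueness, and Lipschitz dependence of the solution on the initial data in $\mathcal{H}^s_\Lambda$.
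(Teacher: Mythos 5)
There is a genuine gap, and it sits exactly where you flag it: the unbalanced configuration $C_1 \sim C_4 \gg C_2, C_3$. Your plan to cancel the surplus $C_1^{b/2+2\varepsilon}$ by a modulation gain from the resonance identity cannot work, because the resonance function $\Omega = 2(\langle n_1 \rangle_\omega - \langle n_2 \rangle_\omega)(\langle n_2 \rangle_\omega - \langle n_3 \rangle_\omega)$ is measured in the actual frequencies $\langle n_i \rangle_\omega \in \R$, whereas your surplus is a power of the \emph{height} $|n_1| \sim C_1$. In the quasi-periodic lattice these are decoupled: there are lattice points of height $\sim C_1$ with $|\langle n \rangle_\omega| \lesssim 1$ (this mismatch is the whole difficulty of the problem, and the paper's ill-posedness example is built from exactly such frequencies). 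For an interaction in which all four frequencies $\langle n_i \rangle_\omega$ are $O(1)$ but $|n_1| \sim |n_4| \sim C_1 \gg 1$, every modulation weight $\langle \tau_i + \langle n_i \rangle_\omega^2 \rangle$ can be $O(1)$, so there is no gain whatsoever to set against $C_1^{b/2}$; the $X^{s,b}$ machinery sees only frequencies and modulations, never heights. (The Wick-type gauge for the diagonal terms is also not the issue; at $s>b/2$ those terms are harmless.)

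The missing ingredient is a bilinear refinement in the height variable rather than a resonance argument. The paper pairs the two high-height factors together and decomposes the higher one into height-cubes $Q^a_{C_{\min}}$ of side $C_{\min}$; Galilean invariance translates each cube to the origin, so the $L^4$-Strichartz loss for that factor is $C_{\min}^{b/4+\varepsilon}$ rather than $C_{\max}^{b/4+\varepsilon}$, and almost orthogonality of the cube decomposition recovers the square sum in $a$. This yields
\begin{equation*}
\| R_{C_1} u_1\, R_{C_2} u_2 \|_{L_t^2([0,T],\mathcal{L}^2_x)} \lesssim_\varepsilon T^{\frac{1}{4}}\, C_{\min}^{\frac{b}{2}+\varepsilon}\, \| R_{C_1} u_1 \|_{Y^0_T} \| R_{C_2} u_2 \|_{Y^0_T},
\end{equation*}
so that in the trilinear form one never pays in the maximal height: in your notation, two such bilinear estimates replace the four-fold H\"older and close both the balanced and the unbalanced cases at $s > \frac{b}{2}$. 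Your framework ($X^{s,1/2+\delta}$ instead of $V^2$-based spaces, dualization, dyadic height decomposition, the balanced-case computation) is compatible with this, but as written the proof does not close: you must replace the per-factor $L^4$ bound and the resonance step by the Galilean/almost-orthogonality bilinear estimate (or an equivalent device that charges losses only to $C_{\min}$).
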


Previously, Oh \cite{Oh2015} considered \eqref{eq:NLSIntroII} with $\Lambda = \Z + \sqrt{2} \Z$ and showed local well-posedness for $s > 1$. More precisely, he showed local well-posedness in $\mathcal{A}_\omega$. Applying the Cauchy-Schwarz inequality yields the following embedding for $s>1$:
\begin{equation*}
\mathcal{H}^s_{\Lambda} \subseteq \mathcal{A}_\omega.
\end{equation*}

For $\Lambda = \Z + \sqrt{2} \Z$ Theorem \ref{thm:LWPNLS} yields local well-posedness for $s>\frac{1}{2}$. We regard this as low regularity local well-posedness since the space $\mathcal{H}^s_\Lambda$ is not a Banach algebra for $s < 1$. 
The improvement of the local well-posedness rests on the fixed-time estimates \eqref{eq:FixedTimeL4StrichartzSEQIntro}.
The key bilinear estimates are a consequence of almost orthogonality and Galilean invariance, which read for $\Lambda = \Z + \sqrt{2} \Z$:
\begin{equation*}
\| R_{C_1} e^{it \partial_x^2} f_1 R_{C_2} e^{it \partial_x^2} f_2 \|_{L_t^2([0,T],\mathcal{L}^2_x(\R))} \lesssim_\varepsilon T^{\frac{1}{4}} C_{\min}^{\frac{1}{2}+\varepsilon} \| R_{C_1} u \|_{\mathcal{L}^2_x} \| R_{C_2} v \|_{\mathcal{L}^2_x}.
\end{equation*}
With the above at hand, it is standard to apply the contraction mapping principle in adapted function spaces \cite{KochTataru2005,HadacHerrKoch2009,HadacHerrKoch2009Erratum}. For this reason we shall be brief. We remark that, since the solutions are constructed via the contraction mapping principle, the dependence of solutions on initial data is real analytic like the nonlinearity. This result is sharp up to endpoints. We show $C^3$-ill-posedness of \eqref{eq:CubicNLSIntro} on $\Lambda = \omega \cdot \Z^{\nu}$ in $\mathcal{H}^s_\Lambda$ for $s<\frac{\nu-1}{2}$.
Moreover, the arguments are flexible again and apply to Schr\"odinger equations with algebraic power nonlinearity. We record a result for the NLS with algebraic power- nonlinearity in Section \ref{section:LWP}: Let $\Lambda = \omega \cdot \Z^{\nu} \subseteq \R$ be like above and $m \geq 3$. Then
\begin{equation*}
\left\{ \begin{array}{cl}
i \partial_t u + \partial_x^2 u &= \pm |u|^{2(m-1)} u, \quad (t,x) \in \R \times \R, \\
u(0) &= u_0 \in \mathcal{H}^s_\Lambda
\end{array} \right.
\end{equation*}
is locally well-posed for $s>s^*=2(\nu-1)\big( \frac{1}{2} - \frac{1}{2m} \big) + (1-\frac{3}{m})$. Damanik \emph{et al.} \cite{DamanikLiXu2024A} show local well-posedness for coefficients decaying like $|c(n)| \lesssim (1+|n|)^{-r}$ for $r > 4 \nu$. Theorem \ref{thm:LWPGenearlNLSDetailed} implies an admissible Fourier decay of $r>s^* + \frac{\nu}{2}$, which constitutes an improvement.

\medskip

Finally, we remark that one can understand the ill-posedness result as limitation of the Harmonic Analysis approach presently taken. Proving global well-posedness of \eqref{eq:CubicNLSIntro}, which would be very natural in $\mathcal{L}^2_x(\R)$, is beyond the methods of the paper. Global well-posedness might also depend on the lattice $\Lambda$ and whether the evolution is focusing or defocusing. The present analysis suggests that a proof of global well-posedness can only be accomplished employing the complete integrability. This would parallel the result for decaying data, in the sense that local well-posedness below $L^2(\R)$ could only be proved using the complete integrability \cite{HarropGriffithsKillipVisan2024}.

\medskip

\emph{Outline of the paper.} In Section \ref{section:Preliminaries} we elaborate on the properties of almost-periodic functions and the diophantine conditions. Moreover, we recall the C\'ordoba--Fefferman square function estimate and $\ell^2$-decoupling. In Section \ref{section:FixedTimeEstimates} Strichartz estimates on fixed time scales are proved, in Section \ref{section:AveragedStrichartzEstimates} we show Stri\-chartz estimates with additional averaging in time. Finally, in Section \ref{section:LWP} we show local well-posedness results for Schr\"odinger equations with quasi-periodic initial data. Examples pointing out sharpness of the Strichartz estimates and of the local well-posedness of the cubic NLS are given in Section \ref{section:Examples}. In the Appendix we indicate how the analysis recovers Tsugawa's result for the KdV equation.

\medskip

\textbf{Basic notations:}

\begin{itemize}
\item For $A \in \C^d$ we denote the Euclidean norm with $|A| = \sqrt{|A_1|^2 + \ldots + |A_d|^2}$. We let $\langle A \rangle = 1 + |A|$. For $A,B \in \R$ let $A \wedge B = \min(A,B)$, and $A \vee B = \max(A,B)$.
\item $A \lesssim B$ denotes the inequality $A \leq C B$ with a harmless constant $C$. Dependence of $C$ on parameters is indicated with subscripts, e.g., $A \lesssim_\varepsilon B$ denotes $A \leq C(\varepsilon) B$ with $C$ depending on $\varepsilon$.
\item Dyadic numbers are denoted with capital letters $M,N, \ldots \in 2^{\N_0}$.
\item For $A \subseteq \R^d$ we denote with $\mathcal{N}_\delta(A)$ the $\delta$-neighborhood of $A$.
\item For $1 \leq p < \infty$, $A \subseteq \R^d$, the usual $L^p$-norm for Lebesgue-measurable functions $f:A \to \C$  is given by
\begin{equation*}
\| f \|_{L^p(A)}^p = \int_A |f(x)|^p dx.
\end{equation*}
For $p = \infty$ the $L^\infty$-norm describes the essential supremum.
\item With $\mathcal{L}^p_x$ we denote the norms on $\R^d$ after averaging, see Section \ref{section:Preliminaries}.

\end{itemize}

\section{Preliminaries}
\label{section:Preliminaries}
\subsection{Almost-periodic functions}

In this section we recall key properties of Besico\-vitch-almost periodic functions. We refer to \cite{Corduneanu1989,BesicovitchBohr1931} for further reading.
For $1 \leq p < \infty$, and a measurable function $f: \R^d \to \C$ define
\begin{equation*}
\| f \|^p_{\mathcal{L}^p_x(\R^d)} = \lim_{L \to \infty} \frac{1}{(2L)^d} \int_{[-L,L]^d} |f(x)|^p dx.
\end{equation*}
Moreover, let
\begin{equation*}
\| f \|_{\mathcal{L}^\infty(\R^d)} := \| f \|_{L^\infty(\R^d)}.
\end{equation*}

We define Besicovitch-almost-periodic functions in higher dimensions as
\begin{equation*}
\mathcal{B}_2(\R^d) = \overline{\{ f(x) = \sum_{\lambda \in \Lambda_f} a_\lambda e^{i \lambda x}: \Lambda_f \subseteq \R^d \text{ finite }  \}}^{\| \cdot \|_{\mathcal{L}_x^2}}.
\end{equation*}

The Fourier expansion is valid in $\mathcal{L}^2_x(\R^d)$:
\begin{equation*}
f(x) = \sum_{\lambda \in \Lambda} \hat{f}(\lambda) e^{i \lambda x}, \quad x \in \R^d
\end{equation*}
with $\Lambda \subseteq \R^d$ at most countable infinite.\footnote{Precisely, $\mathcal{B}_2$ is comprised of equivalence classes of functions $f \sim g$ with $\mathcal{M}(|f-g|^2) = 0$.} By H\"older's inequality, we obtain the Fourier coefficients by
\begin{equation*}
\hat{f}(\lambda) = \lim_{L \to \infty} \frac{1}{2L} \int_{[-L,L]} f(x) e^{-i \lambda x} dx,
\end{equation*}
which satisfies $|\hat{f}(\lambda)| \leq \| f \|_{\mathcal{L}^2}$. Define the mean-value by
\begin{equation*}
\mathcal{M}(f) = \lim_{L \to \infty} \frac{1}{2L} \int_{[-L,L]} f(x) dx.
\end{equation*}

In the following let $\sigma(f) \subseteq \Lambda \subseteq \R^d$. We have the following classification of $f \in \mathcal{B}_2(\R^d)$:
\begin{itemize}
\item If $\Lambda$ can be chosen such that $\Lambda \subseteq \omega^{(1)} \Z \times \ldots \times \omega^{(d)} \Z$ for some $\omega^{(i)} > 0$, $i=1,\ldots,d$, then $f$ is a periodic function.
\item If $\Lambda$ can be chosen such that $\Lambda \subseteq \omega^{(1)} \cdot \Z^{\nu_1} \times \ldots \times \omega^{(d)} \cdot \Z^{\nu_d}$ with $\omega^{(i)} \in \R^{\nu_i}_{>0}$ non-resonant, then $f$ is a quasi-periodic function.
\item Else, $\Lambda \subseteq \sum_{k_1=1}^\infty \omega^{(1)}_k \Z \times \sum_{k_2=1}^\infty \omega^{(2)}_k \Z \ldots \times \sum_{k_d=1}^\infty \omega^{(d)}_k \Z$ for $(\omega^{(i)}_k)_{k \in \N} \subseteq \R_{>0}$ linearly independent over $\Q$ for any $i=1,\ldots,d$, then $f$ is an almost periodic function.
\end{itemize}

Let $f \in \mathcal{B}_2(\R)$ be a quasi-periodic function. Then, there are rationally independent $\{ \omega_1,\ldots,\omega_{\nu} \} \subseteq \R_{>0}$ such that $\sigma(f) \subseteq \Lambda = \omega \cdot \Z^{\nu}$. For $n \in \Z^{\nu}$ we let $\langle n \rangle_\omega = n \cdot \omega$. For any frequency $\lambda \in \Lambda$ there is a unique $n \in \Z^{\nu}$ such that $\lambda = \langle n \rangle_\omega$. We refer to $|n|$ as the \emph{height} of the frequency $\langle n \rangle_\omega$.  We have the Fourier series representation with convergence in $\mathcal{L}^2_x(\R)$:
\begin{equation*}
f(x) = \sum_{n \in \Z^{\nu}} e^{i \langle n \rangle_\omega x} \hat{f}(\langle n \rangle_\omega).
\end{equation*}
We quantify regularity through the Sobolev-type norms: For $s \geq 0$ define
\begin{equation*}
\mathcal{H}^s_{\Lambda}(\R) = \{ f \in \mathcal{B}_2(\R) : \| f \|_{\mathcal{H}^s_\Lambda} < \infty \}, \quad \| f \|^2_{\mathcal{H}^s_\Lambda} = \sum_{n \in \Z^{\nu}} \langle n \rangle^{2s} | \hat{f}(\langle n \rangle_\omega)|^2.
\end{equation*}

%
%
%

The density parameter is defined as $b=\nu-1$.

\medskip

The generalization of the above considerations to higher-dimensional quasi-perio\-dic functions is straight-forward. Let $d \in \N$, $(\nu_1,\ldots,\nu_d) \in \N^d$, $\omega^{(i)} \in \R_{>0}^{\nu_i}$ for $i=1,\ldots,d$ with $\omega^{(i)}$ non-resonant, and let
\begin{equation}
\label{eq:HigherDimensionalLattice} 
\Lambda = \omega^{(1)} \cdot \Z^{\nu_1} \times \ldots \times \omega^{(d)} \cdot \Z^{\nu_d}.
\end{equation}
Define for $n_1 \in \Z^{\nu_1},\ldots,n_d \in \Z^{\nu_d}$: 
\begin{equation*}
\langle n \rangle_\omega = (\langle n_1 \rangle_{\omega^{(1)}}, \ldots, \langle n_d \rangle_{\omega^{(d)}}) \in \R^d.
\end{equation*}
 We have the Fourier series expansion in $\mathcal{L}^2(\R^d)$:
\begin{equation*} 
f = \sum_{n_1 \in \Z^{\nu_1},\ldots,n_d \in \Z^{\nu_d} }  e^{i \langle n \rangle_\omega \cdot x} \hat{f}(\langle n \rangle_\omega)
\end{equation*}
and the Sobolev norm:
\begin{equation*}
\| f \|^2_{\mathcal{H}^s_{\Lambda}(\R^d)} = \sum_{n_1 \in \Z^{\nu_1}, \ldots, n_d \in \Z^{\nu_d}} \langle n \rangle^{2s} |\hat{f}(\langle n \rangle_\omega)|^2.
\end{equation*}
We let $b_i = \nu_i - 1$ and $b=b_1+\ldots+b_d$.

%
%
%

\subsection{C\'ordoba--Fefferman square function estimate}

We recall the following square function estimate \cite{Fefferman1973,Cordoba1979,Cordoba1982} as one of the key ingredients for the $L^4$-Strichartz estimates in one dimension:
\begin{theorem}[C\'ordoba--Fefferman~square~function~estimate]
\label{thm:CFSquareFunctionEstimateCurvature}
Let $\Gamma = \{(\xi,h(\xi)): \xi \in [-1,1] \}$ with 
\begin{equation*}
\frac{1}{2} \leq h''(\xi) \leq 2
\end{equation*}
and $\text{supp}(\hat{F}) \subseteq \mathcal{N}_\delta(\Gamma)$ for some $0<\delta \ll 1$. Let  $\Theta_\delta$ denote a cover of $\mathcal{N}_\delta(\Gamma)$ with finitely overlapping $10 \delta^{\frac{1}{2}} \times 10 \delta$-boxes with long-side pointing  into tangential and short side pointing into normal direction. Then the following estimate holds:
\begin{equation*}
\| F \|_{L^4(\R^2)} \lesssim \big\| \big( \sum_{\theta \in \Theta_\delta} |F_\theta|^2 \big)^{\frac{1}{2}} \big\|_{L^4(\R^2)}.
\end{equation*}
with $F_\theta$ denoting the Fourier projection of $F$ to $\theta$.
\end{theorem}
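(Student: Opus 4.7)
The plan is to follow the classical C\'ordoba--Fefferman bilinear strategy: expand $|F|^2$ as a double sum over cap-pairs, transfer to the Fourier side via Plancherel, and exploit the curvature of $\Gamma$ to bound the multiplicity of the Minkowski-difference support. Concretely, I would first observe that
\begin{equation*}
\|F\|_{L^4(\R^2)}^4 = \||F|^2\|_{L^2(\R^2)}^2, \qquad \Big\|\Big(\sum_\theta |F_\theta|^2\Big)^{1/2}\Big\|_{L^4(\R^2)}^4 = \Big\|\sum_\theta |F_\theta|^2\Big\|_{L^2(\R^2)}^2,
\end{equation*}
and decompose
\begin{equation*}
|F|^2 = \sum_\theta |F_\theta|^2 + \sum_{\theta_1 \neq \theta_2} F_{\theta_1}\overline{F_{\theta_2}}.
\end{equation*}
By the triangle inequality in $L^2(\R^2)$, it then suffices to control the off-diagonal sum by $\|\sum_\theta |F_\theta|^2\|_{L^2}$. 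Each product $F_{\theta_1}\overline{F_{\theta_2}}$ has Fourier support in the Minkowski difference $\theta_1 - \theta_2$.

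The geometric heart of the argument is a multiplicity bound for the family $\{\theta_1 - \theta_2\}_{(\theta_1, \theta_2)}$. Parametrizing each cap by the tangential coordinate $\xi_j \in [-1,1]$ of its base point on $\Gamma$, membership $\eta = (a,b) \in \theta_1 - \theta_2$ forces
\begin{equation*}
\xi_1 - \xi_2 = a + O(\delta^{1/2}), \qquad h(\xi_1) - h(\xi_2) = b + O(\delta^{1/2}),
\end{equation*}
where the $\delta^{1/2}$ tolerances come from the tangential widths of the caps together with $|h'|\lesssim 1$. Writing the second identity as $h'(\xi^*)(\xi_1 - \xi_2) = b + O(\delta^{1/2})$ via the mean-value theorem and using $h'' \geq 1/2$ to invert $h'$, one obtains that in the regime $|a| \gtrsim \delta^{1/2}$ the pair $(\xi_1, \xi_2)$ is pinned to an $O(\delta^{1/2})$-neighborhood, accommodating only $O(1)$ cap-pairs.

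Given the multiplicity bound, Cauchy--Schwarz at each frequency followed by a second application of Plancherel yields
\begin{equation*}
\Big\|\sum_{(\theta_1, \theta_2)} F_{\theta_1}\overline{F_{\theta_2}}\Big\|_{L^2}^2 \lesssim \sum_{(\theta_1, \theta_2)} \|F_{\theta_1}\overline{F_{\theta_2}}\|_{L^2}^2 = \sum_{(\theta_1, \theta_2)} \int |F_{\theta_1}|^2 |F_{\theta_2}|^2 = \Big\|\sum_\theta |F_\theta|^2\Big\|_{L^2}^2,
\end{equation*}
which completes the proof. The main obstacle is establishing the pointwise $O(1)$ multiplicity uniformly in $\eta$; the near-diagonal regime $|a| \lesssim \delta^{1/2}$, where adjacent cap-pairs produce nearly coincident parallelograms centered near the origin, is the most delicate. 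I would handle it by noting that these parallelograms, although nearly concentric, carry distinct long-axis orientations dictated by the tangent direction at $\xi^* \approx (\xi_1 + \xi_2)/2$, and the lower bound $h'' \geq 1/2$ matches the angular step between consecutive caps to the angular width of each parallelogram, again yielding $O(1)$ overlaps.
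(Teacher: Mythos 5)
There is a genuine gap at the heart of your multiplicity claim. Your decomposition expands $|F|^2=\sum_{\theta_1,\theta_2}F_{\theta_1}\overline{F_{\theta_2}}$, so each piece has Fourier support in the \emph{difference} set $\theta_1-\theta_2$, and the Cauchy--Schwarz/Plancherel step needs these sets to have $O(1)$ overlap. This fails near the origin even after you discard the exact diagonal: for every pair of \emph{adjacent} caps $\theta_1\neq\theta_2$, the set $\theta_1-\theta_2$ contains a full $c\delta$-neighborhood of $0$ (take points of the two caps near their common edge), and there are $\sim\delta^{-1/2}$ such pairs, so a frequency $\eta$ with $|\eta|\lesssim\delta$ lies in $\sim\delta^{-1/2}$ of the difference sets. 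Your proposed angular-separation fix cannot repair this: a $10\delta^{1/2}\times10\delta$ box passing within $O(\delta)$ of the origin contains the whole $c\delta$-ball around the origin regardless of its orientation; the tangent directions, spaced $\sim\delta^{1/2}$ apart, only separate the boxes at distance $r\gtrsim\delta^{1/2}$ from the origin (the angular width $\sim\delta/r$ matches the step $\delta^{1/2}$ precisely when $r\sim\delta^{1/2}$), so at $r\sim\delta$ the multiplicity really is $\sim\delta^{-1/2}$, not $O(1)$. A smaller but real slip occurs in your far regime: the tolerance in the second relation should be $O(\delta)$ (the normal thickness, after replacing box points by curve points), not $O(\delta^{1/2})$; with $O(\delta^{1/2})$ the mean-value argument pins $\xi^*$ only to an interval of length $O(\delta^{1/2}/|a|)$, which gives nothing at the threshold $|a|\sim\delta^{1/2}$. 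With the correct $O(\delta)$ tolerance, monotonicity of $\xi\mapsto h(\xi)-h(\xi-a)$, whose derivative is comparable to $|a|$ by $h''\in[\tfrac12,2]$, pins $\xi_1$ to an interval of length $O(\delta/|a|)\leq O(\delta^{1/2})$, and the far count is indeed $O(1)$.

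The argument can be repaired in two standard ways. Either keep your difference decomposition, handle the finitely many near-diagonal families ($\theta_2$ within $O(1)$ caps of $\theta_1$) pointwise via $\sum_{\mathrm{near}}|F_{\theta_1}||F_{\theta_2}|\lesssim\sum_\theta|F_\theta|^2$ (AM--GM), and use the overlap bound only for separated pairs $|a|\gtrsim K\delta^{1/2}$; or avoid differences altogether by writing $\|F\|_{L^4}^4=\|F^2\|_{L^2}^2$ with $F^2=\sum_{\theta_1,\theta_2}F_{\theta_1}F_{\theta_2}$, whose pieces have Fourier support in the \emph{sum} sets $\theta_1+\theta_2$; strict convexity ($h''\geq\tfrac12$) makes the sum map essentially injective, i.e.\ knowledge of $\xi_1+\xi_2$ and $h(\xi_1)+h(\xi_2)$ up to $O(\delta)$ determines the unordered pair $\{\xi_1,\xi_2\}$ up to $O(\delta^{1/2})$, uniformly including the diagonal, so the sum sets have $O(1)$ overlap everywhere. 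The second route is the biorthogonality argument the paper itself relies on: the statement is quoted from the classical references, and the underlying quadruple count for the system $\xi_1+\xi_2=\xi_3+\xi_4$, $h(\xi_1)+h(\xi_2)=h(\xi_3)+h(\xi_4)+\mathcal{O}(\delta)$ is carried out explicitly inside the proof of Theorem~\ref{thm:CFSquareFunctionCubic}. As written, your proof does not close; with repair (i) or (ii) it becomes the standard C\'ordoba--Fefferman argument.
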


This will separate the frequencies for the Schr\"odinger evolution and trivialize the time-evolution. Note that for $h(\xi) = \xi^3$ the curvature degenerates at the origin. We show the following variant directly:
\begin{theorem}[C\'ordoba--Fefferman square function estimate for the cubic]
\label{thm:CFSquareFunctionCubic}
Let $\Gamma = \{(\xi,\xi^3) : \xi \in [-1,1] \}$ and $\text{supp}(\hat{F}) \subseteq \mathcal{N}_\delta(\Gamma)$ for some $0<\delta\ll 1$. Let $\Theta_\delta$ denote a cover of $\mathcal{N}_\delta(\Gamma)$ with finitely overlapping $10 \delta^{\frac{1}{3}} \times 10 \delta$-boxes with long side pointing into tangential and short side into normal direction. Then the following estimate holds:
\begin{equation*}
\| F \|_{L^4(\R^2)} \lesssim \big\| \big( \sum_{\theta \in \Theta_\delta} |F_\theta|^2 \big)^{\frac{1}{2}} \big\|_{L^4(\R^2)}.
\end{equation*}
\end{theorem}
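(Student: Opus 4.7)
The plan is to adapt the Córdoba--Fefferman proof for the parabola to the cubic case, using the algebraic identity $\xi^3+\eta^3=(\xi+\eta)^3-3\xi\eta(\xi+\eta)$ to establish translative (Minkowski-sum) separation of the $\delta^{1/3}\times\delta$ caps.

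Starting from the identity $\|F\|_{L^4(\R^2)}^4 = \|F^2\|_{L^2(\R^2)}^2$ and the decomposition $F=\sum_{\theta\in\Theta_\delta}F_\theta$, I would expand $F^2=\sum_{\theta_1,\theta_2}F_{\theta_1}F_{\theta_2}$; each product is Fourier-supported in the Minkowski sum $\theta_1+\theta_2$. Parametrizing caps by their central abscissa $a\in[-1,1]$ (quantized at scale $\delta^{1/3}$), the center of $\theta_{a_1}+\theta_{a_2}$ is $(a_1+a_2,\,a_1^3+a_2^3)$; when $a_1+a_2\neq 0$ the identity rearranges to
\[
a_1 a_2 \;=\; \frac{(a_1+a_2)^3-(a_1^3+a_2^3)}{3(a_1+a_2)},
\]
so the unordered pair $\{a_1,a_2\}$ is recovered from the Minkowski center and each $\theta_1+\theta_2$ arises from at most $O(1)$ ordered pairs. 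By Plancherel this yields
\[
\|F^2\|_{L^2}^2 \;\lesssim\; \sum_{\theta_1,\theta_2} \|F_{\theta_1}F_{\theta_2}\|_{L^2}^2 \;=\; \int \Bigl(\sum_\theta |F_\theta|^2\Bigr)^2 \;=\; \Bigl\|\Bigl(\sum_\theta |F_\theta|^2\Bigr)^{1/2}\Bigr\|_{L^4}^4,
\]
which is the desired estimate.

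The main obstacle is the anti-diagonal regime $a_1+a_2\approx 0$, where the algebraic inversion collapses and the Minkowski rectangles $\theta_a+\theta_{-a}$ all concentrate near the frequency origin as $a$ varies. To keep the overlap multiplicity bounded I would exploit that these rectangles have pairwise distinct tangent orientations $(1,3a^2)$, so their pairwise intersection is controlled by the transversality $|a^2-a'^2|$. Equivalently, dyadically decomposing $|\xi_1|\sim 2^{-k}$ for $2^{-k}\geq\delta^{1/3}$ and applying the anisotropic rescaling $(\xi_1,\xi_2)\mapsto(2^{-k}\xi_1,2^{-3k}\xi_2)$ preserves the cubic and normalizes the curvature to unit size, reducing each dyadic piece to Theorem \ref{thm:CFSquareFunctionEstimateCurvature} with neighborhood parameter $\delta_k=\delta\cdot 2^{3k}\leq 1$; the central piece $|\xi_1|\lesssim\delta^{1/3}$ sits inside a single $\theta\in\Theta_\delta$ and is trivial. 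Verifying bounded overlap multiplicity uniformly on the anti-diagonal, or equivalently recombining the dyadic pieces without any logarithmic loss, is the delicate step I expect to require the most care.
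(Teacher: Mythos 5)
Your main-term algebra (recovering $\xi_1\xi_2$ from $\xi_1+\xi_2$ and $\xi_1^3+\xi_2^3$ and then running the parabola-type biorthogonality) is exactly the idea the paper uses, but the one genuinely delicate regime --- $\xi_1+\xi_2\approx 0$ --- is left open in your proposal, and neither of your two suggested fixes works as stated. First, bounded overlap multiplicity on the anti-diagonal is not merely delicate, it is false: every Minkowski rectangle $\theta_a+\theta_{-a}$ contains the frequency origin, so the origin has multiplicity $\sim\delta^{-1/3}$; worse, the quadruple system $\xi_1+\xi_2=\xi_3+\xi_4$, $\xi_1^3+\xi_2^3=\xi_3^3+\xi_4^3$ has the exact two-parameter family of solutions $(\xi,-\xi,\eta,-\eta)$ with $\xi,\eta$ arbitrary, so no transversality count of support intersections can restore $O(1)$ multiplicity for mixed-sign interactions; biorthogonality across the two signs simply does not hold. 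Second, the dyadic rescaling $|\xi_1|\sim 2^{-k}$ reduces to Theorem \ref{thm:CFSquareFunctionEstimateCurvature} only after splitting each block by sign (on $|\xi|\sim 2^{-k}$ one has $h''(\xi)=6\xi$, which changes sign, so the hypothesis $\tfrac12\le h''\le 2$ fails on the full block), and it produces caps at the curvature-adapted scale $(\delta 2^{k})^{1/2}\le\delta^{1/3}$, which is \emph{finer} than the caps in the statement; passing from the fine square function to the coarse $\delta^{1/3}$ one requires an additional vector-valued (Rubio de Francia type) ingredient, and recombining the $\sim\log(1/\delta)$ dyadic blocks without loss is exactly the point you concede you cannot yet do. So as written the proposal does not prove the theorem.

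The missing idea is much simpler and is how the paper proceeds: split $F=F_++F_-$ into positive- and negative-frequency parts by the triangle inequality \emph{before} expanding the $L^4$ norm. This is harmless because the square function of each half is pointwise dominated by the full square function over $\Theta_\delta$. On each half all frequencies have one sign, so in the resonance system either $\xi_1+\xi_2\gtrsim\delta^{1/3}$, in which case one may divide by $3(\xi_1+\xi_2)$ to get $\xi_1\xi_2=\xi_3\xi_4+\mathcal{O}(\delta^{2/3})$ and conclude by the parabola biorthogonality (your main-term computation), or $\xi_1+\xi_2\ll\delta^{1/3}$, in which case positivity forces all four frequencies to lie within $\mathcal{O}(1)$ caps of the origin and the contribution is trivially biorthogonal. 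The sign splitting removes the anti-diagonal interactions entirely, with no dyadic decomposition, no fine-to-coarse comparison, and no logarithmic loss; note also that if you do insist on the dyadic route, you end up splitting by sign anyway to apply Theorem \ref{thm:CFSquareFunctionEstimateCurvature}, at which point the global sign splitting renders the whole dyadic apparatus unnecessary.
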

\begin{proof}
We split the Fourier support into positive and negative frequencies and suppose in the following by symmetry that
\begin{equation*}
\text{supp}(\hat{F}) \subseteq \mathcal{N}_\delta (\{ (\xi,\xi^3) : \, \xi \in [0,1] \}).
\end{equation*}
We decompose $F = \sum_{\theta \in \Theta_\delta} F_\theta$ and obtain by Plancherel's theorem
\begin{equation*}
\begin{split}
\int_{\R^2} \big| \sum_{\theta \in \Theta_\delta} F_\theta \big|^4 dx &= \int_{\R^2} \big| \sum_{\theta_1 \in \Theta_\delta} F_{\theta_1} \sum_{\theta_2 \in \Theta_\delta} F_{\theta_2} \big|^2 \\
&= \int_{\R^2} \big| \sum_{\theta_1} \widehat{F_{\theta_1}} * \sum_{\theta_2} \widehat{F_{\theta_2}} \big|^2 \\
&= \int_{\R^2} \big( \sum_{\theta_1} \widehat{F_{\theta_1}} * \sum_{\theta_2} \widehat{F_{\theta_2}} \big) \overline{\big( \sum_{\theta_3} \widehat{F_{\theta_3}} * \sum_{\theta_4} \widehat{F_{\theta_4}} \big)}.
\end{split} 
\end{equation*}
Non-trivial contribution to the integral stems from solutions to the system
\begin{equation*}
\left\{ \begin{array}{cl}
\xi_1 + \xi_2 &= \xi_3 + \xi_4, \\
\xi_1^3 + \xi_2^3 &= \xi_3^3 + \xi_4^3 + \mathcal{O}(\delta)
\end{array} \right.
\end{equation*}
for $(\xi_i,\xi_i^3) \in \theta_i$. We shall establish a biorthogonality: For $\xi_1 + \xi_2 \ll \delta^{\frac{1}{3}}$ it is clear that $\xi_i$ are coming from the first $\mathcal{O}(1)$-blocks $\delta^{\frac{1}{3}} \times \delta$-close to the origin. So we suppose that $\xi_1 + \xi_2 \gtrsim \delta^{\frac{1}{3}}$. We take the cubic power of the first equation:
\begin{equation*}
\xi_1^3 + 3 \xi_1^2 \xi_2 + 3 \xi_1 \xi_2^2 + \xi_2^3 = \xi_3^3 + 3 \xi_3^2 \xi_4 + 3 \xi_3 \xi_4^2 + \xi_4^3.
\end{equation*}
Subtracting this from the second equation gives
\begin{equation*}
3\xi_1 \xi_2 (\xi_1+\xi_2) = 3 \xi_3 \xi_4 (\xi_3 + \xi_4) + \mathcal{O}(\delta).
\end{equation*}
This implies
\begin{equation*}
\xi_1 \xi_2 = \xi_3 \xi_4 + \mathcal{O}(\delta^{\frac{2}{3}}).
\end{equation*}
We square the first equation and subtract the above display to find
\begin{equation*}
\left\{ \begin{array}{cl}
\xi_1 + \xi_2 &= \xi_3 + \xi_4, \\
\xi_1^2 + \xi_2^2 &= \xi_3^2 + \xi_4^2 + \mathcal{O}(\delta^{\frac{2}{3}}).
\end{array} \right.
\end{equation*}
Now the claim follows from the biorthogonality underpinning Theorem \ref{thm:CFSquareFunctionEstimateCurvature}. For the sake of self-containedness we give the details. Write
\begin{equation*}
(\xi_1 - \xi_3)(\xi_1 + \xi_3) = (\xi_4-\xi_2)(\xi_4+\xi_2) + \mathcal{O}(\delta^{\frac{2}{3}}).
\end{equation*}
If $|\xi_1-\xi_3| \lesssim \delta^{\frac{1}{3}}$ biorthogonality follows. So, we suppose that $|\xi_1-\xi_3| \gg \delta^{\frac{1}{3}}$. In this case we find
\begin{equation*}
\left\{ \begin{array}{cl}
\xi_1 + \xi_3 &= \xi_2 + \xi_4 + \mathcal{O}(\delta^{\frac{1}{3}}), \\
\xi_1 + \xi_2 &= \xi_3 + \xi_4,
\end{array} \right.
\end{equation*}
which implies $|\xi_1-\xi_4| \lesssim \delta^{\frac{1}{3}}$, hence biorthogonality.
\end{proof}

\subsection{$\ell^2$-decoupling}

Here we recall the $\ell^2$-decoupling result due to Bour\-gain-\-De\-meter \cite[Theorem~1]{BourgainDemeter2015} for future use. Let $h \in C^2(B_d(0,1),\R)$, $\Gamma = \{(\xi,h(\xi)) : \xi \in B_d(0,1) \} \subseteq \R^{d+1}$ be a compact elliptic hypersurface with principal curvature $K_i \in [C^{-1},C]$ for some $C>0$. Let $p_d = \frac{2(d+2)}{d}$, and define
\begin{equation*}
\alpha(p) = \begin{cases}
0, &\quad 2 < p < p_d, \\
\frac{d}{2} - \frac{d+2}{p}, &\quad p_d \leq p < \infty.
\end{cases}
\end{equation*}

\begin{theorem}[$\ell^2$-decoupling]
\label{thm:Decoupling}
Let $0 < \delta \ll 1$, $F \in \mathcal{S}(\R^{d+1})$ with $\text{supp}(\hat{F}) \subseteq \mathcal{N}_\delta(\Gamma)$ and $2 < p < \infty$. Then the following estimate holds:
\begin{equation*}
\| F \|_{L^{p}(B_{\delta^{-1}})} \lesssim_\varepsilon \delta^{-(\alpha(p)+\varepsilon)} \big( \sum_{\theta \in \Theta_\delta} \| F_\theta \|^2_{L^{p}(w_{B_{\delta^{-1}}})} \big)^{\frac{1}{2}}.
\end{equation*}
In the above display $\Theta_\delta$ contains a finitely overlapping collection of caps of size comparable to $\delta^{\frac{1}{2}} \times \ldots \times \delta^{\frac{1}{2}} \times \delta$ with small side pointing into the normal direction, which covers $\mathcal{N}_\delta(\Gamma)$.
\end{theorem}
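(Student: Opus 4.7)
The plan is to follow the Bourgain--Demeter strategy based on a multilinear Kakeya-type inequality combined with a broad/narrow decomposition and induction on scales. Define the decoupling constant $D_p(\delta)$ as the smallest constant $A$ such that $\|F\|_{L^p(B_{\delta^{-1}})} \leq A \big(\sum_{\theta \in \Theta_\delta} \|F_\theta\|^2_{L^p(w_{B_{\delta^{-1}}})}\big)^{1/2}$ holds for every Schwartz $F$ with $\mathrm{supp}(\hat F) \subseteq \mathcal{N}_\delta(\Gamma)$. The goal is to prove $D_p(\delta) \lesssim_\varepsilon \delta^{-\alpha(p)-\varepsilon}$, which by the definition of $\alpha(p)$ is trivial below $p_d$ (and follows from the endpoint) and nontrivial at and above $p_d$. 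I would first reduce to the paraboloid $h(\xi) = |\xi|^2$ by a compactness/partition-of-unity argument using the uniform bounds on the principal curvatures.

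The central tool is a \emph{multilinear decoupling} inequality: if $F_1, \ldots, F_{d+1}$ have Fourier supports in $\delta$-neighborhoods of $(d+1)$-transverse caps $\tau_1, \ldots, \tau_{d+1}$ on $\Gamma$, one can prove
\begin{equation*}
\Bigl\| \prod_{j=1}^{d+1} |F_j|^{1/(d+1)} \Bigr\|_{L^p(B_{\delta^{-1}})} \lesssim_\varepsilon \delta^{-\varepsilon} \prod_{j=1}^{d+1} \Bigl( \sum_{\theta \subseteq \tau_j} \|F_{j,\theta}\|_{L^p(w_{B_{\delta^{-1}}})}^2 \Bigr)^{1/(2(d+1))}
\end{equation*}
by combining $L^2$-orthogonality with the Bennett--Carbery--Tao multilinear Kakeya/restriction bound, which converts transversality of the caps into an $L^{p_d}$ gain. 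A wave packet decomposition of each $F_j$ at scale $\delta^{-1/2}$ puts the problem into the geometric form required by multilinear Kakeya.

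To pass from multilinear back to linear, I would run a broad/narrow dichotomy at a small scale $K^{-1}$ (with $K$ large but independent of $\delta$). At each point in $B_{\delta^{-1}}$ either the value $|F(x)|$ is controlled by a bounded number of $K^{-1}$-caps (the \emph{narrow} case, handled by a trivial Cauchy--Schwarz inequality and then parabolic rescaling, which maps a cap of radius $K^{-1}$ to the full $\delta K^2$-problem) or it is controlled by the $(d+1)$-linear expression over transverse caps (the \emph{broad} case, handled by the multilinear estimate above). This yields a recursive inequality of the schematic form
\begin{equation*}
D_p(\delta) \lesssim_\varepsilon K^{C(d,p)} \, \delta^{-\varepsilon} + K^{-\eta} D_p(\delta K^2)
\end{equation*}
for some $\eta > 0$ depending on the gap between $p$ and the sharp exponents, together with the parabolic rescaling identity $D_p(\delta) \leq D_p(\delta/\sigma^2)$ applied inside each cap of size $\sigma$. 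Iterating the recursion $\log(\delta^{-1})/\log(K^2)$ times and choosing $K = K(\varepsilon)$ large enough absorbs the multiplicative losses into $\delta^{-\varepsilon}$ and closes the induction.

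The main obstacle is the bookkeeping in the bootstrap: the multilinear estimate is sharp only for genuinely transverse configurations, so one must carefully quantify how the narrow case at scale $K^{-1}$ produces a strictly smaller problem after parabolic rescaling, and one must verify that the $\varepsilon$-loss in the multilinear Kakeya bound does not compound across the $\log(\delta^{-1})$ iteration steps. The delicate point is balancing the constants $K^{C(d,p)}$ against the gain $K^{-\eta}$ so that the iteration converges to the sharp exponent $\alpha(p)$; this is where the input from Bennett--Carbery--Tao, and in particular the full-strength multilinear restriction theorem at $L^{p_d}$, enters essentially.
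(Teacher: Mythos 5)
The paper does not prove Theorem~\ref{thm:Decoupling} at all: it is recalled verbatim from Bourgain--Demeter \cite{BourgainDemeter2015}, with \cite{Demeter2020} cited for a textbook treatment, and is then used as a black box. So your proposal cannot be compared with an argument in the paper; what you have written is a sketch of the original Bourgain--Demeter proof, and as a roadmap it names the right ingredients: reduction to the paraboloid, wave packets at scale $\delta^{1/2}$, transversality and the Bennett--Carbery--Tao multilinear input, a broad/narrow (Bourgain--Guth type) reduction, parabolic rescaling, and induction on scales. One caveat is worth stating, since you present it as "bookkeeping": the single recursive inequality $D_p(\delta) \lesssim K^{C}\delta^{-\varepsilon} + K^{-\eta} D_p(\delta K^2)$, obtained from one application of a multilinear bound at scale $\delta$ plus the narrow/rescaling step, is essentially the older Bourgain--Guth scheme and does not by itself reach the sharp exponent $\alpha(p)$ for all $p \geq p_d$. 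The sharp theorem requires proving the multilinear \emph{decoupling} inequality by the genuinely multiscale iteration of Bourgain--Demeter --- the ball-inflation step coming from multilinear Kakeya, interleaved with $L^2$-orthogonality on balls at a whole hierarchy of intermediate scales --- and only afterwards transferring to the linear inequality; balancing $K^{C}$ against $K^{-\eta}$ in a one-step recursion is exactly where the naive argument loses an exponent. So your outline is faithful to the literature at the level of ingredients, but the step you defer is the heart of the proof rather than a technicality; for the purposes of this paper, quoting the theorem as the author does is the appropriate course.
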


Recall that the above decoupling also implies the global estimate with $L^p$-norm taken over the whole space. We refer to \cite{Demeter2020} for a textbook treatment.

\section{Strichartz estimates on fixed time intervals}
\label{section:FixedTimeEstimates}
This section is devoted to the proof of Strichartz estimates on fixed time intervals. We handle the $L^4$-estimates in one dimension  from Theorem \ref{thm:CFSquareFunctionEstimateCurvature} first. In the $L^4$-estimate we obtain a gain for estimates on small time scales.

\subsection{The $L^4$-estimates}

Recall that the frequencies are contained in the lattice $\Lambda = \omega_1 \Z + \ldots + \omega_{\nu} \Z$
with $\nu \in \N$, $\omega \in \R^{\nu}_{>0}$ non-resonant, and let $b=\nu-1$.

In the following we prove the estimates
\begin{align}
\label{eq:FixedTimeL4StrichartzSEQ}
\| e^{it \partial_x^2} u_0 \|_{L_t^4([0,T],\mathcal{L}^4_x(\R))} &\lesssim T^{\frac{1}{8}} \| u_0 \|_{\mathcal{H}^s_x} \text{ for } s > \frac{b}{4}, \\
\label{eq:FixedTimeL4StrichartzAiry}
\| e^{t \partial_x^3} P_N R_C u_0 \|_{L_t^4([0,T],\mathcal{L}^4_x(\R))} &\lesssim T^{\frac{1}{4}} C^{\frac{b}{4}} \langle T^{-\frac{1}{2}} N^{-\frac{1}{2}} \rangle^{\frac{1}{4}} \| u_0 \|_{\mathcal{L}^2_x} \text{ for } 1 \leq N \lesssim C.
\end{align}
\begin{proof}[Proof~of~Theorem~\ref{thm:L4StrichartzFixedTime}]
We begin with the proof of \eqref{eq:FixedTimeL4StrichartzSEQ}. Note that the height bounds the modulus of frequencies $\langle n \rangle_\omega$, which can occur. By the triangle inequality we find that it suffices to prove for $1 \leq N \lesssim C$:
\begin{equation*}
\| e^{it \partial_x^2} P_N R_C f \|_{L_t^4([0,T],\mathcal{L}_x^4)} \lesssim T^{\frac{1}{8}} C^{\frac{b}{4}} \| f \|_{\mathcal{L}^2_x}.
\end{equation*}
For $T \geq N^{-2}$ we shall show that
\begin{equation}
\label{eq:AlmostOrthogonalityL4Strichartz}
\| e^{it \partial_x^2} P_N R_C f \|_{L_t^4([0,T],\mathcal{L}^4_x)} \lesssim \big( \sum_{I_T: T^{-\frac{1}{2}}-\text{interval}} \| e^{it \partial_x^2} P_N R_C P_{I_T} f \|^2_{L_t^4(w_T, \mathcal{L}^4_x)} \big)^{\frac{1}{2}}.
\end{equation}
Above $L_t^4(w_T)$ denotes the $L^4$-norm with a polynomial weight $w_T(t) \lesssim (1+ |t|/T)^{-100}$ decaying off $[-T,T]$:
\begin{equation*}
\| f \|_{L_t^4(w_T)}^4 = \int_{\R} |f(t)|^4 w_T(t) dt.
\end{equation*}

Taken the above estimate for granted, it remains to prove an estimate
\begin{equation}
\label{eq:AuxL4SEQI}
\| e^{it \partial_x^2 } R_C P_{I_T} f \|_{L_t^4(w_T, \mathcal{L}^4_x)} \lesssim T^{\frac{1}{8}} C^{\frac{b}{4}} \| P_{I_T} f \|_{\mathcal{L}^2_x}
\end{equation}
as the claim will then follow from the almost orthogonality in \eqref{eq:AlmostOrthogonalityL4Strichartz}. It follows from Galilean invariance and lack of oscillation on the time interval $[0,T]$ for frequencies $\lesssim T^{-\frac{1}{2}}$ that
\begin{equation}
\label{eq:AuxL4SEQII}
\| e^{it \partial_x^2} P_N R_C P_I f \|_{L_t^4([0,T],\mathcal{L}^4_x)} \lesssim T^{\frac{1}{4}} \| P_I R_C f \|_{\mathcal{L}^4_x}.
\end{equation}
It remains to obtain an estimate for the number of lattice points $n \in \Z^{\nu}$ with $\langle n \rangle \sim C$ and $\langle n \rangle_\omega \in I_T$. Firstly, divide $I_T$ into intervals $I$ of unit length. We can bound the number of frequencies contained in an interval of unit length by $C^b = C^{\nu-1}$ by the rank of $\Lambda$. Hence, we obtain from the Cauchy-Schwarz inequality
\begin{equation*}
\| P_{I_T} R_C f \|_{\mathcal{L}^\infty_x} \lesssim T^{-\frac{1}{4}} C^{\frac{b}{2}} \| f \|_{\mathcal{L}^2_x}.
\end{equation*}
Interpolation with the trivial $\mathcal{L}^2_x$-estimate gives
\begin{equation*}
\| P_{I_T} R_C f \|_{\mathcal{L}^4_x} \lesssim T^{-\frac{1}{8}} C^{\frac{b}{4}} \| f \|_{\mathcal{L}^2_x},
\end{equation*}
which together with \eqref{eq:AuxL4SEQII} implies \eqref{eq:AuxL4SEQI}.

\smallskip

We are left with proving the almost orthogonality estimate \eqref{eq:AlmostOrthogonalityL4Strichartz}. For $P_{\lesssim 1} R_C e^{it \partial_x^2} f$ this is immediate from the Cauchy-Schwarz inequality. We turn to the contribution of $P_N R_C e^{it \partial_x^2} f$ for $1 \ll N \lesssim C$. We rescale to unit frequencies by $t \to N^2 t$, $x \to Nx$, $\xi \to \xi/N$, and we find
\begin{equation*}
\| P_N R_C e^{it \partial_x^2} f \|^4_{L_t^4([0,T],\mathcal{L}^4_x)} = N^{-2} \| P_1 R_C e^{it \partial_x^2} f' \|^4_{L_t^4([0,N^2 T], \mathcal{L}^4_x)}.
\end{equation*}
We write
\begin{equation*}
\begin{split}
&\quad \| P_1 R_C e^{it \partial_x^2} f' \|^4_{L_t^4([0,N^2 T], \mathcal{L}^4_x)} \\
&= \lim_{L \to \infty} \frac{1}{2L} \int_{[0,N^2 T] \times [-NL,NL]} \big| \sum_{\substack{n \in \Z^{\nu}/N, \\ |N n | \sim C}} e^{i( \langle n \rangle_\omega x + \langle n \rangle^2_{\omega} t)} \hat{f}(N \langle n \rangle_\omega) \big|^4 dx dt.
\end{split}
\end{equation*}
Choose $L \gg N^2 T$, and let $w_A : \R \to \R$ denote a function with compactly supported Fourier transform, which satisfies $|w_A(x)| \gtrsim 1$ for $|x| \lesssim A$ and $\text{supp}(\hat{w}_A) \subseteq B(0,cA^{-1})$. This allows us to dominate:
\begin{equation*}
\begin{split}
&\quad \int_{[0,N^2 T] \times [-NL,NL]} \big| \sum_{\substack{n \in \Z^{\nu}/N, \\ |N n | \sim C}} e^{i( \langle n \rangle_\omega x + \langle n \rangle^2_{\omega} t)} \hat{f}(N \langle n \rangle_\omega) \big|^4 dx dt \\
&\lesssim \int_{\R^2} \underbrace{w_{N^2 T}^4(t) w_{NL}^4(x) \big| \sum_{\substack{n \in \Z^{\nu}/N, \\ |n| \sim C/N}} e^{i( \langle n \rangle_\omega x + \langle n \rangle_\omega^2 t)} \hat{f}(N \langle n \rangle_\omega) \big|^4}_{|F(x,t)|^4} dx dt.
\end{split}
\end{equation*}
Let $\delta = (N^2 T)^{-1}$. The support of the space-time Fourier transform of $F$ is contained in $\mathcal{N}_\delta(\{(\xi, |\xi|^2) : |\xi| \lesssim 1 \})$. Consequently, applying the C\'ordoba--Fefferman square function estimate yields
\begin{equation*}
\int_{\R^2} |F(x,t)|^4 dx dt \lesssim \int \big( \sum_{I_T: N^{-1} T^{-\frac{1}{2}}-\text{interval}} |P_{I_T} F(x,t)|^2 \big)^2 dx dt.
\end{equation*}
We take the exponent $1/4$, apply Minkowski's inequality to interchange the $\ell^2$ with the $L^4$-norm and rescale to find after taking the limit $L\to \infty$
\begin{equation*}
\| P_N R_C e^{it \partial_x^2} f \|_{L_t^4([0,T],\mathcal{L}^4_x)} \lesssim \big( \sum_{I_T: T^{-\frac{1}{2}}-\text{interval}} \| P_{I_T} R_C e^{it \partial_x^2} f \|^2_{L_t^4(w_T,\mathcal{L}^4_x)} \big)^{\frac{1}{2}},
\end{equation*}
which is \eqref{eq:AlmostOrthogonalityL4Strichartz}.

Next, we consider times $T \leq N^{-2}$. There are no Schr\"odinger oscillations, which allows us to estimate
\begin{equation*}
\| e^{it \partial_x^2} P_N R_C f \|_{L_t^4([0,T],\mathcal{L}^4_x)} \lesssim T^{\frac{1}{4}} \| P_N R_C f \|_{\mathcal{L}^4_x}.
\end{equation*}
We divide now the Fourier support $[-2N,-N/2] \cup [N/2,2N]$ into $\sim N$ unit intervals and for every interval of unit length we have like above at most $\lesssim C^{b}$ frequencies of height $C$. Consequently, an application of Bernstein's inequality yields
\begin{equation*}
\| P_N R_C f \|_{\mathcal{L}^4_x} \lesssim (N C^b)^{\frac{1}{4}} \| f \|_{\mathcal{L}^2_x}.
\end{equation*}
Since $T \leq N^{-2}$ taking the preceding displays together, we finish the proof of \eqref{eq:FixedTimeL4StrichartzSEQ}.

\medskip

We turn to the proof of \eqref{eq:FixedTimeL4StrichartzAiry}. For $N \lesssim 1$ we simply use the same frequency counting argument to bound the number of frequencies with height $\sim C$ by $C^{b}$, which gives by H\"older's inequality
\begin{equation*}
\| P_{N} R_C e^{t \partial_x^3} f \|_{L_t^4([0,T],\mathcal{L}^4_x)} \lesssim T^{\frac{1}{4}} C^{\frac{b}{4}} \| P_N R_C f \|_{\mathcal{L}^2_x}.
\end{equation*}

For $N \gg 1$ and $T \geq N^{-3}$ we shall show the almost orthogonal decomposition:
\begin{equation}
\label{eq:CFSquareFunctionEstimateAiryFixedTime}
\| e^{t \partial_x^3} P_N R_C f \|_{L_t^4([0,T],\mathcal{L}^4_x)} \lesssim \big(\sum_{I_T:T^{-\frac{1}{2}} N^{-\frac{1}{2}}-\text{interval}} \| P_N R_C e^{t \partial_x^3} P_{I_T} f \|_{L_t^4(w_T,\mathcal{L}^4_x)}^2 \big)^{\frac{1}{2}}.
\end{equation}
Then the estimate can be concluded like above by counting the frequencies of height $C$, which are contained in an interval of length $N^{-\frac{1}{2}} T^{-\frac{1}{2}}$. To prove \eqref{eq:CFSquareFunctionEstimateAiryFixedTime}, we rescale to unit frequencies to find:
\begin{equation*}
\begin{split}
&\quad \| P_N R_C e^{t \partial_x^3} f \|^4_{L_t^4([0,T],\mathcal{L}^4_x)} \\
 &= N^{-4} \lim_{L \to \infty} \frac{1}{2L} \int_{[0,N^3 T] \times [-NL,NL]} \big| \sum_{\substack{n \in \Z^{\nu}/N, \\ |\langle n \rangle_\omega| \sim 1, \; |n| \sim C}} e^{i(\langle n \rangle_\omega x + t \langle n \rangle_\omega^3)} \hat{f}(N \langle n \rangle_\omega) \big|^4 dx dt \\
&\lesssim N^{-4} \lim_{L \to \infty} \frac{1}{2L} \int_{\R^2} \underbrace{w^4_{N^3 T}(t) w^4_{NL}(x) \big| \sum_{\substack{n \in \Z^{\nu}/N, \\ |\langle n \rangle_\omega| \sim 1, \; |n| \sim C}} e^{i(\langle n \rangle_\omega x + t \langle n \rangle_\omega^3)} \hat{f}(N \langle n \rangle_\omega) \big|^4}_{|F(x,t)|^4} dx dt.
\end{split}
\end{equation*}
Let $\delta = (N^3 T)^{-1}$. Note that $\text{supp}(\hat{F}) \subseteq \mathcal{N}_\delta(\{(\xi,\xi^3): \, |\xi| \sim 1 \})$. We apply the square function estimate recalled in Theorem \ref{thm:CFSquareFunctionEstimateCurvature} to find
\begin{equation*}
\int_{\R^2} |F(x,t)|^4 dx dt \lesssim \int_{\R^2} \big( \sum_{\theta \in \Theta_\delta} |P_\theta F(x,t)|^2 \big)^2 dx dt.
\end{equation*}
Rescaling gives the almost orthogonal decomposition:
\begin{equation*}
\| P_N R_C e^{t \partial_x^3} f \|_{L_t^4([0,T],\mathcal{L}^4_x)} \lesssim \big( \sum_{I_T: T^{-\frac{1}{2}} N^{-\frac{1}{2}}-\text{interval}} \| P_{N} R_C e^{t \partial_x^3} P_{I_T} f \|^2_{L_t^4(w_T,\mathcal{L}^4_x))} \big)^{\frac{1}{2}}.
\end{equation*}
It remains to count the number of frequencies contained in an $T^{-\frac{1}{2}} N^{-\frac{1}{2}}$-frequency interval. Dividing $T^{-\frac{1}{2}} N^{-\frac{1}{2}}$ into unit intervals and using the density parameter, we find that the number of frequencies contained in an interval $I_T$ is estimated by $\langle T^{-\frac{1}{2}} N^{-\frac{1}{2}} \rangle C^b$.  Arguing like above we find
\begin{equation*}
\| P_{I_T} R_C e^{t \partial_x^3} f \|_{L_t^4(w_T,\mathcal{L}^4_x)} \lesssim T^{\frac{1}{4}} \langle T^{-\frac{1}{2}} N^{-\frac{1}{2}} \rangle^{\frac{1}{4}} C^{\frac{b}{4}}  \| P_{I_T} R_C f \|_{\mathcal{L}^2_x}.
\end{equation*}

For $T \leq N^{-3}$ we again find no significant time oscillations and can conclude by H\"older's inequality and counting the frequencies:
\begin{equation*}
\begin{split}
\| e^{t \partial_x^3} P_N R_C u_0 \|_{L_t^4([0,T],\mathcal{L}^4_x(\R))} \lesssim T^{\frac{1}{4}} \| P_N R_C u_0 \|_{\mathcal{L}^4_x(\R)} &\lesssim T^{\frac{1}{4}} (N C^b)^{\frac{1}{4}} \| u_0 \|_{\mathcal{L}^2_x} \\
&\lesssim T^{\frac{1}{8}} N^{-\frac{1}{8}} C^{\frac{b}{4}} \| u_0 \|_{\mathcal{L}^2_x}.
\end{split}
\end{equation*}

The proof is complete.
\end{proof}


\subsection{Strichartz estimates from decoupling}

In the following we suppose that $\Lambda = \omega^{(1)} \cdot \Z^{\nu_1} \times \ldots \times \omega^{(d)} \cdot \Z^{\nu_d}$ for $\omega^{(i)} \in \R^{\nu_i}_{>0}$ non-resonant. Let $b_i=\nu_i-1$, and $b=b_1+\ldots+b_d$.

\smallskip

Now we can formulate the Strichartz estimates from $\ell^2$-decoupling.
\begin{theorem}
\label{thm:FixedTimeStrichartzGeneral}
Let $d \in \N$, $\Lambda \subseteq \R^d$, and $b$ like above, $f \in \mathcal{H}^s_\Lambda$, and $2 < p < \infty$. Then the following estimate holds:
\begin{equation}
\| e^{it \Delta} f \|_{L_t^p([0,1],\mathcal{L}^p_x(\R^d))} \lesssim \| f \|_{\mathcal{H}^s_\Lambda}
\end{equation}
for
\begin{equation*}
s > s^* = b \big( \frac{1}{2} - \frac{1}{p} \big) + \max \big( \frac{d}{2} - \frac{d+2}{p}, 0 \big).
\end{equation*}
\end{theorem}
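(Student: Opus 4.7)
The plan is to reduce to a frequency-localized estimate via Littlewood--Paley and then combine rescaled $\ell^2$-decoupling at a base exponent (where only $\varepsilon$-loss occurs) with a lattice-counting Bernstein bound, upgrading for $p > p_d$ by interpolation against the trivial $L^\infty$-bound. By the Littlewood--Paley decomposition and the triangle/Cauchy--Schwarz inequalities, it will suffice to establish, for dyadic scales $1 \leq N \lesssim C$, the frequency-localized estimate
\begin{equation*}
\| e^{it\Delta} P_N R_C f \|_{L^p_t([0,1], \mathcal{L}^p_x(\R^d))} \lesssim_\varepsilon N^{\alpha(p)+\varepsilon} C^{b(\frac{1}{2} - \frac{1}{p})} \| P_N R_C f \|_{\mathcal{L}^2_x},
\end{equation*}
with $\alpha(p) = \max(\frac{d}{2} - \frac{d+2}{p}, 0)$; the constraint $N \leq C$ upgrades this to $C^{s^* + \varepsilon}$, and dyadic summation in $C$ under the hypothesis $s > s^*$ then closes the argument.

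For the frequency-localized bound I first rescale $t \to N^2 t$, $x \to N x$ so that the rescaled solution $\tilde u$ has space-time Fourier support in the $\delta$-neighborhood of the unit-scale paraboloid $\{(\tilde \xi, |\tilde \xi|^2) : |\tilde \xi| \sim 1\}$, with $\delta = N^{-2}$ dictated by the rescaled time interval $[0, N^2]$. Following the template of the $L^4$-proof in Section \ref{section:FixedTimeEstimates}, I introduce smooth temporal and spatial weights $w_{N^2}(t')$, $w_{L'}(x')$ to dominate the $\mathcal{L}^p_x$-averaged integral by a full $\R^{d+1}$-integral, apply the $\ell^2$-decoupling theorem at the base exponent $p_0 := \min(p, p_d)$ (for which $\alpha(p_0) = 0$, so the loss is merely $N^\varepsilon$), and pass to the limit $L' \to \infty$. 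Rescaling back yields the almost-orthogonal decomposition
\begin{equation*}
\| e^{it\Delta} P_N R_C f \|_{L^{p_0}_t([0,1], \mathcal{L}^{p_0}_x)} \lesssim_\varepsilon N^\varepsilon \Bigl( \sum_I \| e^{it\Delta} P_I R_C f \|_{L^{p_0}_t([0,1], \mathcal{L}^{p_0}_x)}^2 \Bigr)^{1/2},
\end{equation*}
where $I$ ranges over a cover of $\{|\xi| \sim N\}$ by unit cubes.

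On each unit cube $I$, Galilean invariance renders $\| e^{it\Delta} P_I R_C f \|_{\mathcal{L}^{p_0}_x}$ essentially time-independent on $[0,1]$, and the counting bound $\# \{ n : \langle n \rangle_\omega \in I,\, |n| \sim C \} \lesssim C^b$ combined with Cauchy--Schwarz and interpolation with the trivial $\mathcal{L}^2_x$-identity yields $\| P_I R_C f \|_{\mathcal{L}^{p_0}_x} \lesssim C^{b(1/2 - 1/p_0)} \| P_I R_C f \|_{\mathcal{L}^2_x}$. Summing $\ell^2$ over $I$ via Plancherel completes the argument when $p = p_0$, which handles the range $2 < p \leq p_d$. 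For $p > p_d$ I interpolate the resulting $L^{p_d}$-Strichartz bound with the trivial $L^\infty$-bound
\begin{equation*}
\| e^{it\Delta} P_N R_C f \|_{L^\infty_{t,x}} \leq \| P_N R_C f \|_{L^\infty_x} \lesssim \bigl( N^d C^b \bigr)^{1/2} \| P_N R_C f \|_{\mathcal{L}^2_x},
\end{equation*}
obtained by Cauchy--Schwarz on the $\sim N^d C^b$ lattice points in $\{ |\xi| \sim N,\, |n| \sim C \}$; H\"older's inequality $\| \cdot \|_{L^p_{t,x}} \leq \| \cdot \|_{L^{p_d}_{t,x}}^{p_d/p} \| \cdot \|_{L^\infty_{t,x}}^{1 - p_d/p}$ then reproduces the factor $N^{\alpha(p) + \varepsilon} C^{b(1/2 - 1/p)}$ via the identity $\frac{d}{2}(1 - \frac{p_d}{p}) = \alpha(p)$.

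The hard part will be transferring the Bourgain--Demeter $\ell^2$-decoupling, classically stated for $L^p$-norms on Euclidean balls, to the $\mathcal{L}^p_x$-averaged framework of Besicovitch-almost periodic functions. I will resolve this exactly as in the proof of Theorem \ref{thm:L4StrichartzFixedTime} in Section \ref{section:FixedTimeEstimates}: insert Schwartz spatial weights $w_{L'}$, apply the $\ell^2$-decoupling inequality on $\R^{d+1}$, and recover the $\mathcal{L}^p_x$-averaged norm by taking $L' \to \infty$, exploiting the rapid decay of the weights to annihilate the non-averaged tails.
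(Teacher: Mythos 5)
Your proposal is correct and follows essentially the same route as the paper: reduction to the frequency-localized pieces $P_N R_C f$, rescaling with weighted domination so that Bourgain--Demeter $\ell^2$-decoupling applies at scale $\delta = N^{-2}$, almost orthogonality over unit cubes, the $C^b$ lattice count per cube combined with interpolation against the trivial $\mathcal{L}^2_x$ bound, and dyadic summation using $s > s^*$. The only (harmless) deviation is in the range $p > p_d$: you decouple at the critical exponent $p_d$ and recover the factor $N^{\alpha(p)} C^{b(\frac12-\frac1p)}$ by H\"older against the $L^\infty$ counting bound $(N^d C^b)^{1/2}$, whereas the paper applies decoupling directly at exponent $p$ and carries the loss $N^{\frac d2 - \frac{d+2}{p}+\varepsilon}$ from the decoupling step itself; your identity $\frac d2\bigl(1-\frac{p_d}{p}\bigr)=\alpha(p)$ shows the two give the same numerology.
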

\begin{proof}
It suffices to prove for $C \in 2^{\N_0}$ with $s$ like above:
\begin{equation*}
\| R_C e^{it \Delta} f \|_{L_t^p([0,1],\mathcal{L}^p_x(\R^d))} \lesssim C^{s} \| f \|_{\mathcal{L}^2_x(\R^d)}.
\end{equation*}

We use Minkowski's inequality to decompose: 
\begin{equation}
\label{eq:AuxDecouplingI}
\| e^{it \Delta} R_C f \|_{L_t^p([0,1],\mathcal{L}^p_x(\R^d))} \leq \sum_{1 \leq N \lesssim C} \| e^{it \Delta} P_N R_C f \|_{L_t^p([0,1],\mathcal{L}^p_x(\R^d))}.
\end{equation}
We apply Theorem \ref{thm:Decoupling}, using the same scaling and approximation argument like in the previous proof with $\delta^{-1} = N^2$. This gives an almost orthogonal decomposition
\begin{equation}
\label{eq:AuxDecouplingII}
\begin{split}
&\quad \| e^{it \Delta} P_N R_C f \|_{L_t^p([0,1],\mathcal{L}^p_x)} \\
&\lesssim_\varepsilon 
\begin{cases}
 &N^\varepsilon \big( \sum_{I: 1-\text{cube}} \| e^{it \Delta} P_N R_C P_I f \|^2_{L_t^p(w_1,\mathcal{L}^p_x)} \big)^{\frac{1}{2}}, \quad \quad \quad \quad \; 2 < p \leq p_d, \\
 &N^{\frac{d}{2} - \frac{d+2}{p} + \varepsilon} \big( \sum_{I: 1-\text{cube}} \| e^{it \Delta} P_N R_C P_I f \|^2_{L_t^p(w_1,\mathcal{L}^p_x)} \big)^{\frac{1}{2}}, \quad p_d \leq p < \infty.
\end{cases}
\end{split}
\end{equation}
After frequency projection to a cube with side-length $1$, the time-evolution is trivialized and it remains to count the frequencies with height $C$ contained in a unit cube. The projection of the cube to a coordinate axis is an interval of unit length, which allows us to count the frequencies contained in the cube by $C^b$.

\smallskip

By the Cauchy-Schwarz inequality we obtain the estimate
\begin{equation*}
\| e^{it \Delta} P_N R_C P_I f \|_{L_t^{\infty}([0,1],\mathcal{L}^{\infty}_x)} \lesssim C^{\frac{b}{2}} \| R_C P_I f \|_{\mathcal{L}^2_x}.
\end{equation*}
Interpolation with the trivial $\mathcal{L}^2$-estimate gives for $p \in (2,\infty)$:
\begin{equation}
\label{eq:AuxDecouplingIII}
\| R_C P_I f \|_{L_t^p([0,1],\mathcal{L}^p_x)} \lesssim C^{b \big( \frac{1}{2} - \frac{1}{p} \big)} \| P_I f \|_{\mathcal{L}^2_x}.
\end{equation}

Taking \eqref{eq:AuxDecouplingI}-\eqref{eq:AuxDecouplingIII} together, we obtain from dyadic summation
\begin{equation*}
\| e^{it \Delta} R_C f \|_{L_t^p([0,1],\mathcal{L}^p_x)} \lesssim_\varepsilon (C^{\frac{d}{2} - \frac{d+2}{p}} \vee 1) C^{b \big( \frac{1}{2} - \frac{1}{p} \big)+\varepsilon} \| R_C f \|_{\mathcal{L}^2_x}.
\end{equation*}
The proof is complete.
\end{proof}

\section{Strichartz estimates upon averaging in time}
\label{section:AveragedStrichartzEstimates}
In this section we show Theorem \ref{thm:AveragedL4Strichartz} and a variant in higher dimensions, see below. Decoupling and square function estimates are again the key ingredients. After recording a variant of Littlewood-Paley decomposition, we can suppose a dyadic frequency localization. Dyadically localized frequencies can be rescaled to unit frequencies, after which decoupling becomes applicable. Since we average in time, we can apply decoupling on arbitrarily small scales. This allows us to perfectly separate the frequencies and show Strichartz estimates with the only derivative loss coming from decoupling.

\subsection{Littlewood-Paley decomposition}

To prove the Strichartz estimates, we use a variant of Littlewood-Paley theory. Let $\chi_1 \in C^\infty_c(-2,2)$ be a radially decreasing function with $\chi_1(\xi) \equiv 1$ for $|\xi| \leq 1$. Define for $N \in 2^{\N}$, $\chi_N: \R^d \to \R_{>0}$, $\chi_N(|\xi|) = \chi(|\xi|/N) - \chi(2 |\xi| / N)$. This gives a smooth partition of unity:
\begin{equation*}
\sum_{N \in 2^{\N_0}} \chi_N(|\xi|) \equiv 1.
\end{equation*}
We define frequency projections via $(P_N f) \widehat(\xi) = \chi_N(|\xi|) \hat{f}(\xi)$. We have the following:
\begin{proposition}[Littlewood-Paley~decomposition]
\label{prop:LittlewoodPaley}
Let $f \in \mathcal{B}_2(\R^d)$. Then it holds for $p \in [2,\infty)$:
\begin{equation*}
\| f \|_{\mathcal{L}^p_x(\R^d)} \lesssim \big( \sum_{N \in 2^{\N_0}} \| P_N f \|^2_{\mathcal{L}^p_x(\R^d)} \big)^{\frac{1}{2}}.
\end{equation*}
\end{proposition}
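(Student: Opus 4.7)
The plan is to deduce the stated inequality from the full Littlewood--Paley square-function inequality $\| f \|_{\mathcal{L}^p_x} \lesssim \| (\sum_N |P_N f|^2)^{1/2} \|_{\mathcal{L}^p_x}$ in the averaged setting, followed by Minkowski's inequality: since $p/2 \geq 1$,
\[
\Big\| \big( \sum_N |P_N f|^2 \big)^{1/2} \Big\|_{\mathcal{L}^p_x}^2 = \Big\| \sum_N |P_N f|^2 \Big\|_{\mathcal{L}^{p/2}_x} \leq \sum_N \| P_N f \|_{\mathcal{L}^p_x}^2.
\]
By density of trigonometric polynomials in $\mathcal{B}_2(\R^d)$, I reduce the main square-function task to $f(x) = \sum_{\lambda \in F} a_\lambda e^{i \lambda \cdot x}$ with $F \subseteq \R^d$ finite, so that only finitely many $P_N f$ are non-zero and all formal sums converge.

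The central ingredient I would prove is a Mikhlin-type Fourier multiplier bound in $\mathcal{L}^p_x$: for any symbol $m$ satisfying standard H\"ormander--Mikhlin derivative estimates,
\[
\| m(D) f \|_{\mathcal{L}^p_x(\R^d)} \lesssim_p \| m \|_{\mathrm{Mik}} \, \| f \|_{\mathcal{L}^p_x(\R^d)}, \quad 1 < p < \infty,
\]
for trigonometric polynomial $f$. The cleanest route is a Marcinkiewicz--Zygmund-type transfer from $L^p(\T^d_L)$ on very large rational tori: after approximating the frequencies of $f$ by rationals with a common denominator, one compares the $\mathcal{L}^p_x$-norm on $\R^d$ with the $L^p$-norm on $\T^d_L$ and invokes the classical Mikhlin theorem there. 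Equivalently, one may transfer from the Bohr compactification $G = \R^d_{\mathrm{Bohr}}$, on which the $\mathcal{L}^p_x$-norm coincides with the $L^p(G,d\mu_G)$-norm for Haar measure $\mu_G$. Applied to the random symbol $m_\epsilon(\xi) = \sum_N \epsilon_N \chi_N(|\xi|)$, whose Mikhlin constant is uniform in the signs $\epsilon_N \in \{\pm 1\}$, this gives $\| \sum_N \epsilon_N P_N f \|_{\mathcal{L}^p_x} \lesssim \| f \|_{\mathcal{L}^p_x}$ uniformly in $\epsilon$. Averaging the $p$-th power over $\epsilon$, interchanging expectation with the (finite) mean-value limit, and invoking Khintchine's inequality then produces the upper square-function bound $\| (\sum_N |P_N f|^2)^{1/2} \|_{\mathcal{L}^p_x} \lesssim_p \| f \|_{\mathcal{L}^p_x}$ for all $p \in (1,\infty)$.

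To obtain the direction actually needed, I dualize using the bilinear form $\langle f, g \rangle = \mathcal{M}(f \bar g)$, which satisfies $|\langle f, g \rangle| \leq \| f \|_{\mathcal{L}^p_x} \| g \|_{\mathcal{L}^{p'}_x}$ by H\"older in each cube $[-L,L]^d$ followed by a passage to the limit. Decomposing $\langle f, g \rangle = \sum_N \langle P_N f, \widetilde P_N g \rangle$ with $\widetilde P_N = P_{N/2} + P_N + P_{2N}$ absorbing the overlaps of neighbouring projections, Cauchy--Schwarz in $N$ and H\"older in the mean-value give
\[
|\langle f, g \rangle| \lesssim \Big\| \big( \sum_N |P_N f|^2 \big)^{1/2} \Big\|_{\mathcal{L}^p_x} \Big\| \big( \sum_N |\widetilde P_N g|^2 \big)^{1/2} \Big\|_{\mathcal{L}^{p'}_x},
\]
and the upper square-function bound at the dual exponent $p' \in (1,2]$ controls the second factor by $\| g \|_{\mathcal{L}^{p'}_x}$. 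Taking the supremum over trigonometric polynomial $g$ with $\| g \|_{\mathcal{L}^{p'}_x} \leq 1$ finishes the square-function inequality, and the Minkowski step of the first paragraph then delivers the proposition. The main obstacle is the first step: establishing Mikhlin multipliers in the non-locally-finite $\mathcal{L}^p_x$ setting, where the usual Calder\'on--Zygmund framework is unavailable and one must rely on transference from a genuine $L^p$ space.
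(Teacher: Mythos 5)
Your strategy can be made to work, but it is a genuinely different and considerably heavier route than the paper's. The paper never needs any multiplier or square-function theory in the mean-value norm itself: it multiplies $f$ by a Schwartz weight $\varphi_L$ with $|\varphi_L|\gtrsim 1$ on $[-L,L]^d$ and $\text{supp}(\hat\varphi_L)\subseteq B(0,C/L)$, so that $\varphi_L f$ is a genuine $L^p(\R^d)$ function to which the classical Littlewood--Paley inequality applies; after Minkowski (your first display is exactly this step) the key point is the near-diagonality $P_N(\varphi_L f)=P_N(\varphi_L \tilde P_N f)$ forced by the tiny Fourier support of $\varphi_L$, together with the uniform $L^p(\R^d)$-boundedness of $P_N$, and then $L\to\infty$ by monotone convergence. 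This yields the one-sided bound for $p\ge 2$ directly for all $f\in\mathcal{B}_2(\R^d)$, with no density step and no transference. You instead prove the full two-sided square-function theorem on Besicovitch spaces for all $1<p<\infty$ (Mikhlin-type transference plus Khintchine for the upper bound, then duality at $p'$ for the direction actually needed); this buys a stronger and more general statement, but at the price of a key lemma the paper's trick entirely avoids.

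Two points in your outline would need real work. First, the transference lemma itself is only sketched, and the variant via rounding the frequencies to rationals and comparing with $L^p(\T^d_L)$ is delicate: the mean-value $L^p$-norm is not continuous under perturbation of the frequencies, since the approximation can create new additive relations among them (already $\|e^{i\lambda_1 x}+e^{i\lambda_2 x}\|_{\mathcal{L}^4_x}$ jumps when $\lambda_1\to\lambda_2$, and more generally near-resonances can become exact after rounding), so one cannot simply assert that the two norms are comparable. The safe formulation is the de Leeuw/Bohr-compactification transference you mention as an alternative, or equivalently expressing the finitely many exact frequencies in a rational basis and using Weyl equidistribution to identify $\mathcal{M}(|f|^p)$ with an integral over a finite-dimensional torus; either way this is a nontrivial classical ingredient that your proposal acknowledges but does not supply. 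Second, the reduction ``by density'' to trigonometric polynomials is not automatic: convergence in $\mathcal{B}_2$ controls only the $\mathcal{L}^2_x$-norm, not $\mathcal{L}^p_x$ for $p>2$, so transferring the inequality from trigonometric polynomials to general $f$ requires an additional limiting argument (e.g.\ via partial Fourier sums and a Fatou-type property of the mean-value norm) --- a step the paper's argument never needs because it works with $f$ itself throughout.
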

\begin{proof}
We take suitable Schwartz functions $\varphi_L$, which are rapidly decaying off $[-L,L]^d$ to write:
\begin{equation*}
\| f \|_{\mathcal{L}^p_x(\R^d)}^p = \lim_{L \to \infty} \frac{1}{(2L)^d} \int_{\R^d} |\varphi_L(x) f(x)|^p dx.
\end{equation*}
We require that $\text{supp}(\hat{\varphi}_L) \subseteq B(0,C/L)$ with $|\varphi_L(x)| \gtrsim 1$ for $|x| \leq L$. Now we can apply standard Littlewood-Paley theory in $L^p$-spaces:
\begin{equation*}
\big( \frac{1}{(2L)^d} \int_{\R^d} |\varphi_L(x) f(x)|^p dx \big)^{\frac{1}{p}} \sim \big[ \frac{1}{(2L)^d} \int_{\R^d} \big( \sum_{N \in 2^{\N_0}} |P_N (\varphi_L f)|^2 \big)^{\frac{p}{2}} \big]^{\frac{1}{p}}.
\end{equation*}
We have by Minkowski's inequality:
\begin{equation}
\label{eq:AuxLPEstimate}
\big( \frac{1}{2L} \int_{\R} |\varphi_L(x) f(x)|^p dx \big)^{\frac{1}{p}} \lesssim \big[ \sum_{N \in 2^{\N_0}} \big( \frac{1}{2L} \int_{\R} |P_N (\varphi_L f)|^p \big)^{\frac{2}{p}} \big]^{\frac{1}{2}}.
\end{equation}
Note that by small Fourier support of $\varphi_L$ we have
\begin{equation*}
P_N \varphi_L f = P_N \varphi_L \tilde{P}_N f
\end{equation*}
with $\tilde{P}_N = P_{N/2} + P_N + P_{2N}$, setting $P_{1/2} = 0$. Then by uniform boundedness of $P_N$ in $L^p$ follows from \eqref{eq:AuxLPEstimate}:
\begin{equation*}
\big( \frac{1}{(2L)^d} \int_{\R^d} |\varphi_L(x) f(x)|^p dx \big)^{\frac{1}{p}} \lesssim \big[ \sum_{N \in 2^{\N_0}} \big( \frac{1}{(2L)^d} \int_{\R^d} |\varphi_L(x) \tilde{P}_N f(x)|^p dx \big)^{\frac{2}{p}} \big]^{\frac{1}{2}}.
\end{equation*}
Taking the limit we find by monotone convergence:
\begin{equation*}
\| f \|_{\mathcal{L}^p_x} \lesssim \big( \sum_{N \in 2^{\N_0}} \| \tilde{P}_N f \|_{\mathcal{L}^p_x}^2 \big)^{\frac{1}{2}} \lesssim \big( \sum_{N \in 2^{\N_0}} \| P_N f \|^2_{\mathcal{L}^p_x(\R)} \big)^{\frac{1}{2}}.
\end{equation*}
\end{proof}

\subsection{One-dimensional Strichartz estimates with time-averaging}

Next, we prove Theorem \ref{thm:AveragedL4Strichartz}.
Invoking Proposition \ref{prop:LittlewoodPaley},
it suffices to show frequency-localized estimates:
\begin{align}
\label{eq:FrequencyLocalizedL4SEQ}
\| P_N e^{it \partial_x^2} f \|_{\mathcal{L}^4_{t,x}(\R \times \R)} &\lesssim \| f \|_{\mathcal{L}^2_x(\R)}, \\
\label{eq:FrequencyLocalizedL4Airy}
\| P_N e^{t \partial_x^3} f \|_{\mathcal{L}^4_{t,x}(\R \times \R)} &\lesssim \| f \|_{\mathcal{L}^2_x(\R)}.
\end{align}
By density and limiting arguments, we can suppose that $f$ is a trigonometric polynomial:
\begin{equation*}
f = \sum_{n=1}^M e^{i \lambda_n x} \hat{f}(\lambda_n).
\end{equation*}

\medskip

We turn to the proof of the frequency localized estimates.
The estimates for $N=1$ and $N > 1$ are proven slightly different. For $N=1$, we write
\begin{equation*}
P_1 f = \sum_{ \substack{\lambda \in \Lambda, \\ |\lambda| \leq 1} } e^{i \lambda x} a_\lambda.
\end{equation*}
We consider
\begin{equation*}
e^{it \partial_x^2} P_1 f = \sum_{\substack{\lambda \in \Lambda, \\ |\lambda| \leq 1}} e^{i (\lambda x - t \lambda^2)} a_\lambda.
\end{equation*}
Choose $L$ large enough such that the frequencies are separated by at least $c L^{-\frac{1}{2}}$ (this is clearly possible with the frequencies finite). Then we can write
\begin{equation*}
\int_{[-L,L]^2} |e^{it \partial_x^2} P_1 f |^4 dx dt \lesssim \int_{\R^2} |
\varphi_L(x,t) \cdot e^{it \partial_x^2} P_1 f|^4 dx dt
\end{equation*}
with $|\varphi_L| \gtrsim 1$ on $[-L,L]^2$ and Fourier support contained in $B(0,c^2 L^{-1})$. The space-time Fourier transform of $\varphi_L e^{it \partial_x^2} P_1 f$ is clearly contained in $\mathcal{N}_{c L^{-1}}(\{(\xi,\xi^2) : \xi \in [-1,1] \})$. Let $\delta = L^{-1}$ and let $\Theta_{\delta}$ be a finitely overlapping cover of $\mathcal{N}_{c L^{-1}}(\{(\xi,\xi^2) : \xi \in [-1,1] \})$ with rectangles of size $L^{-\frac{1}{2}} \times L^{-1}$, with long-side pointing into tangential and short side pointing into normal direction. Consequently, we can apply Theorem \ref{thm:CFSquareFunctionEstimateCurvature} to find
\begin{equation*}
\int_{\R^2} |\varphi_L(x,t) e^{it \partial_x^2} P_1 f |^4 dx dt \lesssim \int \big( \sum_{\theta \in \Theta_{\delta}} |P_{\theta}( \varphi_L e^{it \partial_x^2} P_1 f) |^2 \big)^2 dx dt.
\end{equation*}
Exponentiating with $1/4$ and applying Minkowski's inequality we find
\begin{equation*}
\begin{split}
\big( \int_{\R^2} | \varphi_L(x,t) e^{it \partial_x^2} P_1 f |^4 dx dt \big)^{\frac{1}{4}} &\lesssim \big( \int_{\R^2} \big( \sum_{\theta \in \Theta_{\delta}} | \varphi_L e^{it \partial_x^2} P_{\tilde{\theta}} f |^2 \big)^2 dx dt \big)^{\frac{1}{4}} \\
&\lesssim \big[ \sum_{\theta \in \Theta_{\delta}} \big( \int_{\R^2} | \varphi_L e^{it \partial_x^2} P_{\tilde{\theta}} f|^4 dx dt \big)^{\frac{2}{4}} \big]^{\frac{1}{2}}.
\end{split}
\end{equation*}
By the compact Fourier support of $\varphi_L$ we can write $P_{\theta} \varphi_L f = P_{\theta} \varphi_L P_{\tilde{\theta}} f$ with $\tilde{\theta}$ denoting a mild enlargement of $\theta$.

\small

Still, $P_{\tilde{\theta}} f$ contains at most $\mathcal{O}(1)$ frequencies $(\lambda,\lambda^2)$ with $\lambda \in \Lambda_1$ by our choice of $L$. Consequently, the oscillations have been trivialized, and we compute the integral
\begin{equation*}
\int_{\R^2} | \varphi_L e^{it \partial_x^2} P_{\tilde{\theta}} f|^4 dx dt \lesssim L^2 \sum_{\lambda: (\lambda,\lambda^2) \in \tilde{\theta}} |a_\lambda|^4.
\end{equation*}
This gives
\begin{equation*}
\big( \int_{\R^2} | \varphi_L(x,t) e^{it \partial_x^2} P_1 f |^4 dx dt \big)^{\frac{1}{4}} \lesssim L^{\frac{1}{2}} \big( \sum_{\theta \in \Theta_\delta} \big( \sum_{\lambda \in \theta} |a_\lambda|^4 \big)^{\frac{1}{2}} \big)^{\frac{1}{2}} \lesssim L^{\frac{1}{2}} \big( \sum_{\substack{\lambda \in \Lambda, \\ |\lambda| \leq 1}} |a_\lambda|^2 \big)^{\frac{1}{2}}
\end{equation*}
and concludes the proof of \eqref{eq:FrequencyLocalizedL4SEQ} for $N=1$.

\medskip

The estimate
\begin{equation*}
\| P_1 e^{t \partial_x^3} f \|_{\mathcal{L}^4_{t,x}(\R \times \R)} \lesssim \| f \|_{\mathcal{L}^2_x(\R)}
\end{equation*}
is proved along the same lines. The only difference is that we choose $L$ large enough such that frequencies are separated by at least $L^{-\frac{1}{3}}$. Applying Theorem \ref{thm:CFSquareFunctionCubic} with $\delta = L^{-1}$ yields
\begin{equation*}
\int_{\R^2} \big| \varphi_L(x,t) e^{t \partial_x^3} P_1 f \big|^4 dx dt \lesssim \int \big( \sum_{\theta \in \Theta_\delta} \big| P_{\theta} \big( \varphi_L e^{it \partial_x^2} P_1 f \big) \big|^2 \big)^2 dx dt.
\end{equation*}
This again trivializes the time evolution and we find following along the above lines:
\begin{equation*}
\big( \int_{\R^2} |\varphi_L(x,t) e^{t \partial_x^3} P_1 f |^4 dx dt \big)^{\frac{1}{4}} \lesssim L^{\frac{1}{2}} \big( \sum_{\lambda \in \Lambda_1} |a_\lambda|^2 \big)^{\frac{1}{2}}.
\end{equation*}
This concludes \eqref{eq:FrequencyLocalizedL4Airy} for $N=1$.

\medskip

We turn to the estimates \eqref{eq:FrequencyLocalizedL4SEQ} and \eqref{eq:FrequencyLocalizedL4Airy} for $N >1$.
In both cases the proof hinges on Theorem \ref{thm:CFSquareFunctionEstimateCurvature}. We turn to the proof for the Schr\"odinger propagation. Firstly, recall that we assume the frequency set of $P_N f$ to be finite. We denote the minimal separation by $\sigma > 0$. By rescaling we find
\begin{equation}
\label{eq:RescalingSEQ}
\int_{[-L,L] \times [-L,L]} |e^{it \partial_x^2} P_N f|^4 dx dt = N^{-3} \int_{[-N^2 L, N^2 L] \times [-N L, NL]} |e^{it \partial_x^2} P_1 f'|^4 dx dt.
\end{equation}
We dominate with suitable Schwartz functions with compact Fourier support:
\begin{equation*}
\int_{\R^2} w_{N^2 L}^4(t) w^4_{NL}(x) |e^{it \partial_x^2} P_1 f'|^4 dx dt = \int_{\R^2} |\underbrace{w_{N^2 L}(t) w_{NL}(x) e^{it \partial_x^2} P_1 f'}_{F(x,t)}|^4 dx dt.
\end{equation*}
We have $\text{supp}(\hat{F}) \subseteq \mathcal{N}_{(NL)^{-1}}( \{(\xi,\xi^2) : \frac{1}{2} \leq |\xi| \leq 2 \})$. The frequencies are separated by $N^{-1} \sigma$, with $\sigma$ denoting the original separation of frequencies of $P_N f$. We aim to apply Theorem \ref{thm:CFSquareFunctionEstimateCurvature} with $\delta = (NL)^{-1}$. Choose $L$ large enough such that $(NL)^{-\frac{1}{2}} \ll N^{-1} \sigma$. Then applying Theorem \ref{thm:CFSquareFunctionEstimateCurvature} gives
\begin{equation*}
\int_{\R^2} |F(x,t)|^4 dx dt \lesssim \int_{\R^2} \big( \sum_{\theta \in \Theta_\delta} |F_{\theta}(x,t)|^2 \big)^2 dx dt.
\end{equation*}
With the frequencies being separated, i.e., only finitely many $(\lambda,\lambda^2)$ are contributing to $F_\theta$, we can conclude the argument like in the proof for $N=1$.

\smallskip

For the Airy evolution only the rescaling changes:
\begin{equation*}
\int_{[-L,L] \times [-L,L]} |P_N e^{t \partial_x^3} f |^4 dx dt = N^{-4} \int_{[-N^3 L, N^3 L] \times [-NL,NL]} |P_1 e^{t \partial_x^3} f'|^4 dx dt.
\end{equation*}
Dominating like above
\begin{equation*}
\int_{[-N^3 L, N^3 L] \times [-NL,NL]} |P_1 e^{t \partial_x^3} f'|^4 dx dt \lesssim \int_{\R^2} | \underbrace{w_{N^3 L}(t) w_{NL}(x) P_1 e^{t \partial_x^3} f'}_{F}|^4 dx dt.
\end{equation*}
Clearly, $\text{supp}(\hat{F}) \subseteq \mathcal{N}_{(NL)^{-1}}(\{(\xi,\xi^3) : |\xi| \sim 1 \})$. Since the curve $\{(\xi, \xi^3) : |\xi| \sim 1 \}$ has curvature, we can again apply Theorem \ref{thm:CFSquareFunctionEstimateCurvature} and conclude the argument like above.

$\hfill \Box$

\subsection{Strichartz estimates for quasi-periodic functions in higher dimensions}

We record a higher-dimensional version of Theorem \ref{thm:AveragedL4Strichartz}. Let $\Lambda = \omega^{(1)} \cdot \Z^{\nu_1} \times \ldots \times \omega^{(d)} \cdot \Z^{\nu_d} \subseteq \R^d$ with $\omega^{(i)}$ non-resonant. 
Moreover, we suppose the following diophantine condition:

\smallskip

\emph{There are $\alpha_i,\beta_i>0$ such that for any $n_i \in \Z^{\nu_i} \backslash 0$ it holds}
\begin{equation}
\label{eq:DiophantineConditionInfDim}
|\langle n_i \rangle_{\underline{\omega}} | \geq \alpha_i |n_i|^{-\beta_i}.
\end{equation}
Denote $\beta = \beta_1+\ldots+\beta_d$.

\smallskip

We show the following:
\begin{theorem}
\label{thm:AveragedStrichartzHigh}
Let $\Lambda$ be like above, $p_d = \frac{2(d+2)}{d}$, and $\varepsilon > 0$. For $f \in \mathcal{H}^\varepsilon_\Lambda$ the following estimate holds:
\begin{equation*}
\| e^{it \Delta} f \|_{\mathcal{L}^{p_d}_{t,x}(\R \times \R^d)} \lesssim \| f \|_{\mathcal{H}^\varepsilon_\Lambda}.
\end{equation*}
\end{theorem}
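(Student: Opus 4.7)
The plan is to follow the strategy used for the frequency-localized Strichartz estimates \eqref{eq:FrequencyLocalizedL4SEQ}--\eqref{eq:FrequencyLocalizedL4Airy} in the previous subsection, substituting the $\ell^2$-decoupling inequality of Theorem \ref{thm:Decoupling} for the C\'ordoba--Fefferman square function estimate and using the diophantine condition \eqref{eq:DiophantineConditionInfDim} to keep the decoupling $\varepsilon$-loss small. First, by Proposition \ref{prop:LittlewoodPaley} and Minkowski's inequality (valid since $p_d\ge 2$), together with a height decomposition $f=\sum_C R_C f$, the triangle inequality in $C$ and a Cauchy--Schwarz against the convergent dyadic tail $\sum_C C^{-2\varepsilon''}$, it suffices to prove the frequency-localized bound
\begin{equation*}
\|P_N R_C e^{it\Delta}f\|_{\mathcal{L}^{p_d}_{t,x}(\R\times\R^d)} \lesssim C^{\varepsilon'}\|P_N R_C f\|_{\mathcal{L}^2_x(\R^d)}
\end{equation*}
for all dyadic $N,C$ with $N\lesssim C$ (a consequence of $|\langle n\rangle_\omega|\lesssim|n|$), and some $\varepsilon'>0$ that can be made arbitrarily small by tuning the decoupling parameter and $\varepsilon''$.

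To prove this bound I would assume by density that $f$ is a trigonometric polynomial, integrate on $[-L,L]^{d+1}$ with $L$ large, rescale the dyadic band to unit frequencies via $t\mapsto N^2 t$, $x\mapsto Nx$, and dominate the resulting integrand by $\varphi_{N^2L}(T)\prod_i\varphi_{NL}(X_i)\cdot v'(T,X)$, where $\varphi_\bullet$ are Schwartz functions with Fourier support of radius $\lesssim (NL)^{-1}$. This produces a function $F$ whose space-time Fourier support lies in the $c(NL)^{-1}$-neighbourhood of the unit paraboloid $\{(\xi,|\xi|^2):|\xi|\sim 1\}$. In contrast with the one-dimensional square function argument, applying decoupling at the natural scale $(NL)^{-1}$ dictated by this dominator is disastrous: the $(NL)^\varepsilon$ loss diverges as $L\to\infty$. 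The resolution is to invoke the global ($\R^{d+1}$) version of Theorem \ref{thm:Decoupling} at a fixed scale $\delta_0$, independent of $L$, chosen just small enough so that every decoupling cap contains at most $\mathcal{O}(1)$ rescaled frequencies.

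The admissible scale is produced by \eqref{eq:DiophantineConditionInfDim}: two distinct lattice points in $\Lambda$ with heights at most $C$ either coincide in coordinate $i$ or differ there by at least $\alpha_i(2C)^{-\beta_i}$, so after rescaling by $N$ the rescaled frequencies are separated by $\gtrsim C^{-\beta}/N$. Hence $\delta_0\sim(NC^\beta)^{-2}$ is admissible, and decoupling costs $\delta_0^{-\varepsilon}\lesssim (NC^\beta)^{2\varepsilon}\lesssim C^{2\varepsilon(\beta+1)}$ once $N\lesssim C$ is used. On each cap $\theta$ the time evolution is trivialized since $\theta$ contains only $\mathcal{O}(1)$ rescaled frequencies, yielding $\|F_\theta\|_{L^{p_d}(\R^{d+1})}\lesssim (N^{d+2}L^{d+1})^{1/p_d}\bigl(\sum_{\mu\in\theta}|\hat f(N\mu)|^2\bigr)^{1/2}$. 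Squaring, summing in $\theta$, undoing the rescaling, dividing by $(2L)^{(d+1)/p_d}$ and letting $L\to\infty$ produces the desired frequency-localized estimate with $\varepsilon'=2\varepsilon(\beta+1)$; taking the decoupling $\varepsilon$ small enough, and then $\varepsilon''>0$ small in the Cauchy--Schwarz of the first paragraph, absorbs the entire loss into $\|f\|_{\mathcal{H}^\varepsilon_\Lambda}$.

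The main obstacle, as signalled above, is that decoupling at the critical exponent $p_d$ forces a $\delta^{-\varepsilon}$ loss in the decoupling scale, and if $\delta$ were coupled to the averaging parameter $L$ the argument would collapse. What rescues the situation is precisely that averaging over all of $\R$ permits $\delta$ to be chosen independently of $L$, determined purely by the diophantine separation of $\Lambda$; the remaining steps are routine adaptations of the one-dimensional arguments given earlier in this section.
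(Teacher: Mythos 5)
Your proposal is correct and follows essentially the same route as the paper: Littlewood--Paley plus Minkowski and a height decomposition to reduce to $P_NR_C$, rescaling of the dyadic band to unit frequencies, domination by weights with small Fourier support, $\ell^2$-decoupling at the critical exponent $p_d$ (where $\alpha(p_d)=0$), the diophantine condition \eqref{eq:DiophantineConditionInfDim} to ensure $\mathcal{O}(1)$ frequencies per cap, trivialization of the evolution on each cap, and averaging in $L$. The one place where you genuinely diverge is the choice of decoupling scale: the paper applies Theorem \ref{thm:Decoupling} at $\delta=(NL)^{-1}$ while asserting a loss of only $(NC)^\varepsilon$ in \eqref{eq:AuxDecouplingHighII}, which taken literally would give an $(NL)^\varepsilon$ factor diverging as $L\to\infty$; your fix --- decoupling at a fixed scale $\delta_0\sim(NC^{\beta})^{-2}$ determined by the frequency separation, using that the Fourier support lies in the thinner $(NL)^{-1}$-neighbourhood and hence in the $\delta_0$-neighbourhood --- is exactly the Bourgain-type argument needed to make the constant uniform in $L$, and it yields the same $C^{O(\varepsilon)}$ loss the paper records. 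So your write-up is not just consistent with the paper's proof but makes explicit the $L$-uniformity that the paper's displayed decoupling step glosses over.
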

\begin{proof}
Let $p=p_d$ for brevity. We can use a Littlewood-Paley decomposition provided by Proposition \ref{prop:LittlewoodPaley} to reduce to
\begin{equation*}
\| P_N e^{it \Delta} f \|_{\mathcal{L}^p_{t,x}(\R \times \R^d)} \lesssim  \| f \|_{\mathcal{H}^\varepsilon_\Lambda}.
\end{equation*}
For the above, it suffices to show for $C \gtrsim N$:
\begin{equation}
\label{eq:AuxDecouplingHighIV}
\| P_N R_C e^{it \Delta} f \|_{\mathcal{L}^p_{t,x}(\R \times \R^d)} \lesssim_\varepsilon (NC)^\varepsilon \| f \|_{\mathcal{L}^2_x}.
\end{equation}
We can estimate the gap between two distinct frequencies $\underline{n} \neq \underline{n}' \in \Z^{\nu}$ of height $C$ by the diophantine condition:
\begin{equation*}
|\langle \underline{n} \rangle_{\underline{\omega}} - \langle \underline{n} \rangle_{\underline{\omega}} | \gtrsim C^{-\beta}.
\end{equation*}
We can choose $L$ large enough such that $(NL)^{-\frac{1}{2}} \ll C^{-\beta}$. Then, it holds
\begin{equation}
\label{eq:AuxDecouplingHighI}
\begin{split}
&\quad \int_{[-L,L] \times [-L,L]^d} \big| \sum_{\substack{ \lambda \in \Lambda, \\ |\lambda| \sim N, \\ h(\lambda) \sim C }} e^{ i (\lambda x - t \lambda^2)} \hat{f}(\lambda) \big|^p dx dt \\
&= N^{-(d+2)} \int_{[-N^2 L, N^2 L] \times [-N L, NL]} \big| \sum_{\substack{ \lambda' \in \Lambda/N, \\ |\lambda'| \sim 1, \\ h(N \lambda') \sim C }} e^{i (\lambda' x'  - t' (\lambda')^)} \hat{f}(N \lambda') \big|^p dx dt \\
 &\lesssim N^{-(d+2)} \int_{\R^{d+1}} \big| \underbrace{w_{N^2 L}(t) w_{NL}(x) \sum_{\substack{ \lambda' \in \Lambda/N, \\ |\lambda'| \sim 1, \\ h(N \lambda') \sim C }} e^{i (\lambda' x'  - t' (\lambda')^2)} \hat{f}(N \lambda')}_{F(x,t)} \big|^p dx dt.
\end{split}
\end{equation}

We have that $\text{supp}(\hat{F}) \subseteq \mathcal{N}_{\delta}(\{(\xi,-|\xi|^2):|\xi| \leq 1\})$ for $\delta = (NL)^{-1}$. Decoupling gives a decomposition into caps on the Fourier side of size $\delta^{\frac{1}{2}}$ in the tangential direction and $\delta$ in the normal direction:
\begin{equation}
\label{eq:AuxDecouplingHighII}
\| F \|_{L^p(\R^{d+1})} \lesssim_\varepsilon (N C)^\varepsilon \big( \sum_{\theta \in \Theta_\delta} \| P_{\theta} F \|^2_{L^p(\R^{d+1})} \big)^{\frac{1}{2}}.
\end{equation}
Like in the previous arguments, we see that there are only finitely many frequencies $\lambda \in \Lambda$ contributing to $P_{\theta} F$. Hence, we can carry out the integration, average, and obtain from \eqref{eq:AuxDecouplingHighI}-\eqref{eq:AuxDecouplingHighII} the estimate \eqref{eq:AuxDecouplingHighIV}. The proof is complete.

\end{proof}

\section{Low regularity local well-posedness for the nonlinear Schr\"odinger equation with quasi-periodic initial data}
\label{section:LWP}

This section is devoted to the proof of Theorem \ref{thm:LWPNLS}. We begin with the introduction of adapted function spaces:

\subsection{Adapted function spaces}
We define adapted function spaces behaving well with sharp-time cutoff. These constitute a logarithmic refinement of Fourier restriction spaces used by Bourgain \cite{Bourgain1993A,Bourgain1993B} in the analysis of the periodic case. We choose these function spaces because the key bilinear estimates are readily formulated within.

\smallskip

 Functions with bounded variation were investigated by Wiener \cite{Wiener1979} for the first time. In the following we shall also need the predual $U^p$-spaces. We refer to the detailed exposition of Hadac--Herr--Koch \cite{HadacHerrKoch2009} (and \cite{HadacHerrKoch2009Erratum}) for details without which the following account might appear a little bit sketchy.
 
 \begin{definition}
 Let $\mathcal{Z}$ be the collection of finite non-decreasing sequences $\{t_k\}_{k=0}^K$ in $\R$. For $1 \leq p < \infty$ we define $V^p$ as the space of all right-continuous functions $u:\R \to \C$ with $\lim_{t \to -\infty} u(t)=0$ and
 \begin{equation*}
 \| u \|_{V^p} = \sup_{\{t_k\}_{k=0}^K \in \mathcal{Z}} \big( \sum_{k=1}^K |u(t_k)-u(t_{k-1})|^p \big)^{\frac{1}{p}} < \infty.
 \end{equation*}
 \end{definition}

The following $U^p_{\Delta}$-/$V^p_{\Delta}$-spaces will be useful for intermediate steps. The function space, in which we formulate the well-posedness results, are the $Y^s_{\pm}$-spaces defined below.
\begin{definition}
Let $1 \leq p < \infty$. We define
\begin{equation*}
V^p_{\Delta} = \{ u: \R \times \R^d \to \C : e^{-it \Delta} u \in V^p \mathcal{L}^2(\R^d) \}
\end{equation*}
with norm given by
\begin{equation*}
\| u \|_{V^p_{\Delta}} = \big( \sup_{\{t_k\}_{k=0}^K \in \mathcal{Z}} \sum_{k=1}^K \| e^{-it_k \Delta} u(t_k) - e^{-it_{k-1} \Delta} u(t_{k-1}) \|^p_{\mathcal{L}^2_x} \big)^{\frac{1}{p}} < \infty.
\end{equation*}
\end{definition}

The predual $U^p$-spaces admit an atomic decomposition (see \cite{HadacHerrKoch2009}). As a consequence we obtain the transfer principle:
Suppose that for $2 \leq p < \infty$ the estimate
\begin{equation*}
\| R_M e^{it \Delta} f \|_{L_t^p([0,1],\mathcal{L}^p_x(\R))} \lesssim C(M) \| f \|_{\mathcal{L}^2_x}
\end{equation*}
holds. Then we have the following consequence:
\begin{equation*}
\| R_M u \|_{L_t^p([0,1],\mathcal{L}^p_x(\R))} \lesssim C(M) \| u \|_{U^p_\Delta}.
\end{equation*}

In the following fix $\Lambda \subseteq \R^d$ like in \eqref{eq:HigherDimensionalLattice}.
\begin{definition}
For $s \in \R$, we define $Y^s$ as the space of functions $u: \R \times \R^d \to \C$ with $\text{supp} (\hat{u}(t)) \subseteq \Lambda$ such that $e^{it \langle n \rangle_\omega^2} \widehat{u(t)}(\langle n \rangle_\omega)$ lies in $V^2$ for any $n \in \Z^{\nu_1} \times \ldots \times \Z^{\nu_d} =: \Z^{\nu}$ and the norm is defined as
\begin{equation}
\| u \|^2_{Y^s} = \sum_{n \in \Z^{\nu}} (1+|n|^2)^s \| e^{i t \langle n \rangle_\omega^2} \widehat{u(t)}(\langle n \rangle_\omega) \|^2_{V^2} < \infty.
\end{equation}
For $T>0$ and $u:[0,T] \times \R^d \to \C$ we define
\begin{equation*}
\| u \|^2_{Y^s_{T}} = \sum_{n \in \Z^{\nu}} (1+|n|^2)^s \| e^{ it \langle n \rangle_\omega^2} \widehat{u(t)}(\langle n \rangle_\omega) \|^2_{V^2(0,T)}.
\end{equation*}
\end{definition}

We have the following embeddings:
\begin{equation*}
Y^0_T \hookrightarrow V^2_\Delta \hookrightarrow U^p_\Delta \text{ for } p > 2.
\end{equation*}
The first is immediate from the definition and for the second we refer to \cite{HadacHerrKoch2009}.

We summarize further properties to be used freely in the following:
\begin{proposition}[{\cite[Section~2]{HerrTataruTzvetkov2011},~\cite{Koch2014}}]
The following holds:
\begin{itemize}
\item Let $A,B \subseteq \R^d$ be disjoint. For $s \geq 0$ we have
\begin{equation*}
\| P_{A \cup B} u \|^2_{Y^s_T} = \|P_A u \|^2_{Y^s} + \| P_B u \|^2_{Y^s}.
\end{equation*}
\item For $s \geq 0$, $T>0$, and $f \in L_T^1 \mathcal{H}^s_\Lambda$, denote
\begin{equation*}
\mathcal{I}(f)(t) = \int_0^t e^{i(t-s) \Delta} f(s) ds
\end{equation*}
we have with $\chi_T = \chi_{[0,T)}$ denoting the indicator function:
\begin{equation*}
\| \chi_{T} \cdot \mathcal{I}(f) \|_{Y^s} \lesssim \sup_{\substack{v \in Y^{-s}: \\ \| v \|_{Y^{-s}} \leq 1 }} \big| \int_0^T \langle f, \bar{v} \rangle_{\mathcal{L}^2_x} dt \big|.
\end{equation*}
\item For $T>0$ and a function $\phi \in \mathcal{H}^s_\Lambda$ we have
\begin{equation*}
\| \chi_T e^{it \Delta} \phi \|_{Y^s} \lesssim \| \phi \|_{\mathcal{H}^s_\Lambda}
\end{equation*}
and for $u \in Y^s_T$:
\begin{equation*}
\| u \|_{L^\infty_T \mathcal{H}^s_\Lambda} \lesssim \| u \|_{Y^s_T}.
\end{equation*}
\end{itemize}
\end{proposition}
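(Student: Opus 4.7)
The plan is to work directly from the definition of $Y^s$, which decomposes into weighted $\ell^2$-sums of $V^2$-norms of frequency-localized profiles in the interaction representation. Writing $a_n(t) := e^{it \langle n \rangle_\omega^2} \widehat{u(t)}(\langle n \rangle_\omega)$, each item reduces to a statement about the scalar functions $a_n$ combined with Plancherel on $\mathcal{L}^2_x$.

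The disjoint-frequency additivity is essentially tautological: $\widehat{P_A u}(\langle n \rangle_\omega) = \hat{u}(\langle n \rangle_\omega) \mathbf{1}_{\langle n \rangle_\omega \in A}$, and since $A \cap B = \emptyset$ the squared norm $\| P_{A \cup B} u \|_{Y^s_T}^2 = \sum_n \langle n \rangle^{2s} \| a_n \|_{V^2(0,T)}^2 \mathbf{1}_{\langle n \rangle_\omega \in A \cup B}$ splits as the sum of the corresponding sums over $A$ and over $B$. For the $L^\infty_T \mathcal{H}^s_\Lambda$-embedding I would use the pointwise bound $\sup_t |a_n(t)| \lesssim \| a_n \|_{V^2}$, which is built into the definition of $V^2$ (bounded variation controls sup). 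Summing in $n$ with the weight $\langle n \rangle^{2s}$ and applying Plancherel on $\mathcal{L}^2_x$ gives $\| u(t) \|_{\mathcal{H}^s_\Lambda}^2 \lesssim \| u \|_{Y^s_T}^2$. For the free-evolution bound, $a_n$ associated to $\chi_T e^{it\Delta} \phi$ equals $\chi_T(t) \hat\phi(\langle n \rangle_\omega)$, which has $V^2$-norm bounded by $|\hat\phi(\langle n \rangle_\omega)|$ (a single jump); summing gives $\| \chi_T e^{it\Delta} \phi \|_{Y^s} \lesssim \| \phi \|_{\mathcal{H}^s_\Lambda}$.

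The Duhamel estimate is the substantive point. My approach is the standard $U^2/V^2$ duality: since $(U^2)^* = V^2$ under the natural pairing (Hadac--Herr--Koch \cite{HadacHerrKoch2009}), and $Y^s$ is built fiberwise in frequency from $V^2$, the norm $\| \chi_T \mathcal{I}(f) \|_{Y^s}$ can be computed as a supremum over test functions $v \in Y^{-s}$ of unit norm. I would pass the interaction-representation variable through the integral, apply Fubini to reorganize $\int_0^T \langle e^{i(t-s)\Delta} f(s), \bar v(t) \rangle_{\mathcal{L}^2_x}\, ds\, dt$ into $\int_0^T \langle f(s), \overline{e^{-is\Delta}(e^{is\Delta} v)(s)} \rangle_{\mathcal{L}^2_x} ds$ after the duality identification, and then read off the stated bound. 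The delicate ingredient is the compatibility of the sharp cutoff $\chi_T$ with the $V^2$-structure; this is where the Hadac--Herr--Koch atomic decomposition of $U^2$ and the verification that $\chi_T$ is a multiplier on the relevant spaces does the real work, and is the step I expect to require the most care.

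Since all three items are standard consequences of the Hadac--Herr--Koch framework transplanted from the torus to the lattice $\Lambda \subseteq \R^d$ (the only change being that $L^2(\T^d)$ is replaced by the Fourier-Plancherel identification $\mathcal{L}^2_x \simeq \ell^2(\Lambda)$ via $\hat{f}$), the honest proof would consist of checking that nothing in the Herr--Tataru--Tzvetkov \cite{HerrTataruTzvetkov2011} and Koch \cite{Koch2014} arguments uses more than the Hilbert-space structure of $\mathcal{L}^2_x$ and the unitarity of $e^{it\Delta}$ thereon. Both are immediate, so the results transfer verbatim; a reader-friendly presentation would simply cite the above and indicate the translation $\ell^2(\Z^d) \leadsto \ell^2(\Z^\nu)$ via $n \mapsto \langle n \rangle_\omega$.
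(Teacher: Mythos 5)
Your proposal is correct and matches the paper's treatment: the paper offers no proof of this proposition, citing exactly the standard $U^p/V^p$ framework of Hadac--Herr--Koch as presented in Herr--Tataru--Tzvetkov and Koch's notes, and your sketches (disjoint additivity from the definition, $\sup_t|a_n(t)|\lesssim\|a_n\|_{V^2}$, the two-jump step function for the free evolution, and the Duhamel bound via the $U^2$--$V^2$ duality and the embedding $U^2\hookrightarrow V^2$, transferred to the lattice $\Lambda$ using only Plancherel and unitarity of $e^{it\Delta}$ on $\mathcal{L}^2_x$) are precisely the arguments those references carry out.
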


\subsection{Multilinear Strichartz estimates in adapted function spaces}

In the following we denote with $Q^{a}_C$ for $a \in \Z^{\nu}$ the following frequency projection:
\begin{equation*}
Q^a_C f = \sum_{\substack{n \in \Z^{\nu}, \\ |n-a| \leq C}} e^{i \langle n \rangle_\omega x} \hat{f}(\langle n \rangle_\omega).
\end{equation*}

We record the following consequence of Galilean invariance:
\begin{proposition}[Galilean~invariance]
\label{prop:GalileanInvariance}
Let $2 \leq p \leq \infty$, $T \in (0,1]$. Suppose that the following estimate holds:
\begin{equation}
\label{eq:GalileanInvarianceI}
\| e^{it \Delta} R_C f \|_{L_t^p([0,T],\mathcal{L}^p_x(\R^d))} \lesssim T^\delta C^s \| R_C f \|_{\mathcal{L}^2_x(\R^d)}.
\end{equation}
Then it holds with implicit constant independent of $a \in \Z^{\nu}$:
\begin{equation}
\label{eq:GalileanInvarianceII}
\| e^{it \Delta} Q_C^a f \|_{L_t^p([0,1],\mathcal{L}^p_x(\R^d))} \lesssim T^\delta C^s \| Q_C^a f \|_{\mathcal{L}^2_x(\R^d)}.
\end{equation}
\end{proposition}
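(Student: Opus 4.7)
The plan is to reduce the estimate for $Q^a_C f$ to the hypothesized estimate for $R_C$-localized data by means of a Galilean boost, exploiting the translation and modulation invariance of the $\mathcal{L}^p_x$-norms on $\R^d$.

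First I would introduce the frequency-shifted profile
\begin{equation*}
v_0(x) = e^{-i \langle a \rangle_\omega \cdot x} Q^a_C f(x) = \sum_{\substack{m \in \Z^{\nu},\\ |m|\leq C}} e^{i \langle m \rangle_\omega \cdot x} \hat{f}(\langle m+a \rangle_\omega),
\end{equation*}
where I used that the lattice $\Lambda = \omega \cdot \Z^{\nu}$ is translation-invariant under shifts by $\langle a \rangle_\omega$. By construction the frequencies of $v_0$ all have height $\leq C$, so $v_0 = R_C v_0$, and the basic identities
\begin{equation*}
\| v_0 \|_{\mathcal{L}^2_x} = \| Q^a_C f \|_{\mathcal{L}^2_x}
\end{equation*}
hold because multiplication by a unimodular character $e^{-i\langle a\rangle_\omega \cdot x}$ does not change $|{\cdot}|^2$ and therefore does not change the mean $\mathcal{M}(|{\cdot}|^2)$.

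Next I would derive the Galilean identity at the level of the linear propagator. Writing $\langle n \rangle_\omega = \langle m \rangle_\omega + \langle a \rangle_\omega$ with $m = n - a$, the algebraic identity
$|\langle n\rangle_\omega|^2 = |\langle m\rangle_\omega|^2 + 2\langle m\rangle_\omega \cdot \langle a\rangle_\omega + |\langle a\rangle_\omega|^2$
yields, termwise in the Fourier expansion,
\begin{equation*}
e^{it \Delta} Q^a_C f (x) = e^{-i t |\langle a \rangle_\omega|^2} e^{i \langle a \rangle_\omega \cdot x} \, \big( e^{it \Delta} v_0 \big)\big( x - 2 t \langle a \rangle_\omega \big).
\end{equation*}
Taking modulus removes the unimodular prefactor, and the translation invariance of the space-averaged norm $\|\cdot\|_{\mathcal{L}^p_x(\R^d)}$ under the shift $x \mapsto x - 2t\langle a \rangle_\omega$ gives, for each fixed $t$,
\begin{equation*}
\| e^{it \Delta} Q^a_C f \|_{\mathcal{L}^p_x(\R^d)} = \| e^{it \Delta} v_0 \|_{\mathcal{L}^p_x(\R^d)}.
\end{equation*}

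Finally I would take the $L^p_t$-norm over $[0,T]$ and invoke the hypothesis \eqref{eq:GalileanInvarianceI} applied to $v_0$ (which is legitimate since $v_0 = R_C v_0$) to obtain
\begin{equation*}
\| e^{it \Delta} Q^a_C f \|_{L^p_t([0,T],\mathcal{L}^p_x)} = \| e^{it \Delta} v_0 \|_{L^p_t([0,T],\mathcal{L}^p_x)} \lesssim T^{\delta} C^s \| v_0 \|_{\mathcal{L}^2_x} = T^{\delta} C^s \| Q^a_C f \|_{\mathcal{L}^2_x},
\end{equation*}
with implicit constant depending only on the constant in \eqref{eq:GalileanInvarianceI}, in particular independent of $a \in \Z^{\nu}$. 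There is no real obstacle here: the only point one must verify carefully is the invariance of $\mathcal{L}^p_x$ under spatial translations, which follows immediately from substitution in the defining limit $\lim_{L\to\infty}(2L)^{-d}\int_{[-L,L]^d} dx$.
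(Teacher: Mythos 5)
Your proposal is correct and follows essentially the same route as the paper: a Galilean boost in frequency (shifting the lattice index by $a$), the identity $e^{it\Delta}Q^a_C f = e^{-it|\langle a\rangle_\omega|^2} e^{i\langle a\rangle_\omega\cdot x}\,(e^{it\Delta}v_0)(x-2t\langle a\rangle_\omega)$ with $v_0 = R_C v_0$, translation invariance of the averaged $\mathcal{L}^p_x$-norm, and then the hypothesis \eqref{eq:GalileanInvarianceI}. The paper carries out the same computation directly on the exponential sum, so the two arguments coincide.
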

\begin{proof}
We write
\begin{equation*}
\begin{split}
e^{it \Delta} Q_C^a f &= \sum_{\substack{n \in \Z^{\nu}, \\ |n-a| \leq C}} e^{i( \langle n \rangle_\omega x- t \langle n \rangle_\omega^2)} \hat{f}(\langle n \rangle_\omega) \\
&= \sum_{\substack{n' \in \Z^{\nu}, \\ |n'| \leq C}} e^{i((\langle n' \rangle_\omega + \langle a \rangle_\omega) x - t(\langle n' \rangle_\omega + \langle a \rangle_\omega)^2)} \hat{f}(\langle n' \rangle_\omega + \langle a \rangle_\omega) \\
&= e^{i \langle a \rangle_\omega x - i t \langle a \rangle^2_{\omega}} \sum_{\substack{n' \in \Z^{\nu}, \\ |n'| \leq C}} e^{i( \langle n' \rangle_\omega (x - 2t \langle a \rangle_\omega) - t \langle n' \rangle_\omega^2)} \hat{f}(\langle n' + a \rangle_\omega).
\end{split}
\end{equation*}
Consequently, after the linear change of variables $x \to x - 2t \langle a \rangle_\omega$ for fixed $t \in [0,T]$, which leaves the $\mathcal{L}^p_x$-norm invariant, \eqref{eq:GalileanInvarianceI} becomes applicable. This concludes the proof of \eqref{eq:GalileanInvarianceII}.
\end{proof}

For sake of illustration, we record the following one-dimensional instance, which is a
consequence of Theorem \ref{thm:L4StrichartzFixedTime}, the above, and the transfer principle:
\begin{corollary}
\label{cor:L4StrichartzTransfer}
With $\Lambda = \omega \cdot \Z^{\nu} \subseteq \R$ and notations like above, the following estimate holds:
\begin{equation*}
\| Q^a_C e^{it \partial_x^2} f \|_{L_t^4([0,T],\mathcal{L}^4_x)} \lesssim_\varepsilon T^{\frac{1}{8}} C^{\frac{b}{4}+\varepsilon} \| Q_C f \|_{Y_T^0}.
\end{equation*}
\end{corollary}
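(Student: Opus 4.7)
The plan is to chain three ingredients already in place: the fixed-time estimate \eqref{eq:FixedTimeL4StrichartzSEQIntro} of Theorem~\ref{thm:L4StrichartzFixedTime}, Galilean invariance (Proposition~\ref{prop:GalileanInvariance}), and the atomic $U^p_\Delta$-transfer principle together with the embedding $Y^0_T \hookrightarrow V^2_\Delta \hookrightarrow U^p_\Delta$ valid for $p>2$.

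First I would specialize Theorem~\ref{thm:L4StrichartzFixedTime} to a single height-block $R_C f$. Using $\|R_C f\|_{\mathcal{H}^s_\Lambda} \lesssim C^s \|R_C f\|_{\mathcal{L}^2_x}$ for any $s > b/4$, the estimate \eqref{eq:FixedTimeL4StrichartzSEQIntro} reads
\begin{equation*}
\| e^{it\partial_x^2} R_C f \|_{L_t^4([0,T],\mathcal{L}^4_x)} \lesssim_\varepsilon T^{1/8} C^{b/4+\varepsilon} \|R_C f\|_{\mathcal{L}^2_x}.
\end{equation*}
This is exactly the hypothesis \eqref{eq:GalileanInvarianceI} of Proposition~\ref{prop:GalileanInvariance} with $d=1$, $p=4$, $\delta=1/8$, $s=b/4+\varepsilon$, so its conclusion \eqref{eq:GalileanInvarianceII} produces the analogous bound with $R_C$ replaced by $Q^a_C$ and with implicit constant independent of $a \in \Z^{\nu}$.

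Second, I would lift this linear estimate to $U^4_\Delta$ via the atomic decomposition of $U^p$-spaces: every $U^4_\Delta$-atom is a step-function concatenation of free Schr\"odinger solutions, so applying the previous bound on each piece and passing to the infimum over atomic decompositions yields
\begin{equation*}
\|Q^a_C u\|_{L_t^4([0,T],\mathcal{L}^4_x)} \lesssim_\varepsilon T^{1/8} C^{b/4+\varepsilon} \|Q^a_C u\|_{U^4_\Delta}.
\end{equation*}
The embedding $Y^0_T \hookrightarrow V^2_\Delta \hookrightarrow U^4_\Delta$ then trades $U^4_\Delta$ for $Y^0_T$, and the almost-orthogonality identity $\|P_{A \cup B} u\|^2_{Y^0} = \|P_A u\|^2_{Y^0} + \|P_B u\|^2_{Y^0}$ absorbs $Q^a_C$ inside $Q_C$, concluding the proof.

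No substantive obstacle arises, since the genuine analytic work is already packaged in Theorem~\ref{thm:L4StrichartzFixedTime} (where the C\'ordoba--Fefferman square function estimate and the lattice-counting at the height scale $C$ take place) and in the Hadac--Herr--Koch theory of $V^p$/$U^p$-spaces. The one point to flag is that the $\varepsilon$-loss is confined to the height factor $C^{b/4+\varepsilon}$ and does not affect the short-time gain $T^{1/8}$; this is exactly what the contraction-mapping argument for \eqref{eq:NLSIntroII} will need downstream, since it is the $T^{1/8}$ factor that furnishes the small parameter for the fixed-point iteration.
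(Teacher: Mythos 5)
Your argument is exactly the paper's intended route: the corollary is stated there as an immediate consequence of Theorem \ref{thm:L4StrichartzFixedTime}, Proposition \ref{prop:GalileanInvariance}, and the $U^p$-transfer principle with the embedding $Y^0_T \hookrightarrow V^2_\Delta \hookrightarrow U^4_\Delta$, which is precisely the chain you spell out. Your proposal is correct and, if anything, supplies slightly more detail (the reduction $\|R_C f\|_{\mathcal{H}^s_\Lambda} \lesssim C^s \|R_C f\|_{\mathcal{L}^2_x}$ and the atomic-decomposition step) than the paper records.
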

We can now formulate the bilinear refinement, which will be crucial for the local well-posedness result for the cubic NLS on $\R$:
\begin{proposition}
\label{prop:BilinearStrichartzEstimateSEQ}
Let $\Lambda = \omega \cdot \Z^{\nu} \subseteq \R$ be like above, and $C_1,C_2 \in 2^{\N_0}$ with $C_1 \leq C_2$. Then the following estimate holds:
\begin{equation*}
\| R_{C_1} u_1 R_{C_2} u_2 \|_{L_t^2([0,T],\mathcal{L}^2_x)} \lesssim_\varepsilon T^{\frac{1}{4}} C_1^{\frac{b}{2}+\varepsilon} \| R_{C_1} u_1 \|_{Y_T^0} \| R_{C_2} u_2 \|_{Y_T^0}.
\end{equation*}
\end{proposition}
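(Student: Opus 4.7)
The plan is to deploy a Bourgain-type bilinear frequency decomposition along the lattice $\Z^\nu$, using the smaller height $C_1$ as the decomposition scale and leveraging Galilean invariance to avoid paying the larger height $C_2$.

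Fix a sublattice $\Lambda_{C_1} = (2C_1+1) \Z^\nu \subseteq \Z^\nu$ and decompose
$$R_{C_2} u_2 = \sum_{a \in \Lambda_{C_1}} Q^a_{C_1} R_{C_2} u_2.$$
The $Q^a_{C_1}$ are Fourier projections onto pairwise disjoint cubes in $\Z^\nu$ of side $\sim C_1$ which tile $\Z^\nu$; since $n \mapsto \langle n \rangle_\omega$ is injective on $\Z^\nu$ by non-resonance, the $Y_T^0$ definition gives the exact identity
$$\sum_{a \in \Lambda_{C_1}} \| Q^a_{C_1} R_{C_2} u_2 \|_{Y_T^0}^2 = \| R_{C_2} u_2 \|_{Y_T^0}^2.$$
Moreover, the $n$-support of the product $R_{C_1} u_1 \cdot Q^a_{C_1} R_{C_2} u_2$ lies in $a + [-K C_1, K C_1]^\nu$ for an absolute constant $K$, since $|n_1| \sim C_1$ and $|n_2 - a|_\infty \le C_1$ force $|(n_1+n_2)-a|_\infty \lesssim C_1$. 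For distinct $a, a' \in \Lambda_{C_1}$ these enlarged cubes overlap with multiplicity at most $O_\nu(1)$, so at each fixed $t$ Plancherel on $\mathcal{B}_2(\R)$ gives the Bessel-type inequality
$$\Big\| \sum_{a} R_{C_1} u_1(t) \cdot Q^a_{C_1} R_{C_2} u_2(t) \Big\|_{\mathcal{L}^2_x}^2 \lesssim \sum_{a} \| R_{C_1} u_1(t) \cdot Q^a_{C_1} R_{C_2} u_2(t) \|_{\mathcal{L}^2_x}^2,$$
and integrating over $[0,T]$ promotes this to $L_t^2([0,T], \mathcal{L}^2_x)$.

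For each summand, H\"older's inequality in $t$ and $x$ yields
$$\| R_{C_1} u_1 \cdot Q^a_{C_1} R_{C_2} u_2 \|_{L_t^2([0,T], \mathcal{L}^2_x)} \le \| R_{C_1} u_1 \|_{L_t^4([0,T], \mathcal{L}^4_x)} \| Q^a_{C_1} R_{C_2} u_2 \|_{L_t^4([0,T], \mathcal{L}^4_x)},$$
and each factor is bounded by $T^{1/8} C_1^{b/4+\varepsilon}$ times the respective $Y_T^0$-norm via Corollary~\ref{cor:L4StrichartzTransfer}. The crux is that the $C_1^{b/4+\varepsilon}$ constant for the second factor depends only on the box width $C_1$ and not on the center $a$ or on $C_2$: this is exactly Galilean invariance as stated in Proposition~\ref{prop:GalileanInvariance}, which reduces the $L^4$-estimate on a box of width $C_1$ centered at any $a \in \Z^\nu$ to the case $a = 0$ covered by \eqref{eq:FixedTimeL4StrichartzSEQIntro}. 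Multiplying, squaring, summing in $a$, and invoking the $Y_T^0$-orthogonality above closes the estimate with exponent $C_1^{b/2+2\varepsilon}$, which is the claim after renaming $\varepsilon$.

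The main obstacle is the second display above, the almost-orthogonality bookkeeping: although morally immediate, one has to choose the sublattice spacing carefully so that (i) the $Q^a_{C_1}$'s partition $\Z^\nu$ into disjoint cubes (giving the $Y_T^0$-orthogonality), and (ii) the enlarged product-supports $a + [-K C_1, K C_1]^\nu$ still have overlap multiplicity independent of $C_1$ and $C_2$. Once that is verified, the remainder is a direct application of Corollary~\ref{cor:L4StrichartzTransfer} together with Proposition~\ref{prop:GalileanInvariance}.
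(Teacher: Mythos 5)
Your proposal is correct and follows essentially the same route as the paper: decompose $R_{C_2}u_2$ into $C_1$-separated boxes $Q^a_{C_1}$, use the convolution constraint to get almost orthogonality of the products in $L_t^2\mathcal{L}^2_x$, apply H\"older and then the transferred, Galilean-invariant $L^4$-estimate of Corollary~\ref{cor:L4StrichartzTransfer} to each factor, and resum using the square-summability of the $Q^a_{C_1}$-pieces in $Y^0_T$. Your write-up just makes explicit the bookkeeping (choice of sublattice, bounded overlap of the enlarged supports) that the paper leaves implicit.
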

\begin{proof}
We use almost orthogonality induced by the convolution constraint to write
\begin{equation*}
\| R_{C_1} u_1 R_{C_2} u_2 \|^2_{L_t^2([0,T],\mathcal{L}^2_x)} \lesssim \sum_{a: C_1-\text{separated}} \| R_{C_1} u_1 Q^a_{C_1} R_{C_2} u_2 \|_{L_t^2([0,T],\mathcal{L}^2_x)}^2
\end{equation*}
with $\sum_{a: C_1 - \text{separated}} Q_{C_1}^a R_{C_2} f = R_{C_2} f$. Then we can apply the Strichartz estimate from Corollary \ref{cor:L4StrichartzTransfer} twice and assemble the frequency projection by the properties of the function spaces:
\begin{equation*}
\begin{split}
\sum_a \| R_{C_1} u_1 Q^a_{C_1} R_{C_2} u_2 \|^2_{L_t^2([0,T],\mathcal{L}^2_x)} &\leq \| R_{C_1} u_1 \|^2_{L_t^4([0,T],\mathcal{L}^4_x)} \sum_a \| Q^a_{C_1} R_{C_2} u_2 \|^2_{L_t^4([0,T],\mathcal{L}^4_x)} \\
&\lesssim_\varepsilon T^{\frac{1}{2}} C_1^{b+\varepsilon} \| R_{C_1} u_1 \|^2_{Y_T^0} \| R_{C_2} u_2 \|^2_{Y_T^0}.
\end{split}
\end{equation*}
\end{proof}

The multilinear generalization reads as follows:
\begin{proposition}
\label{prop:MultilinearEstimate}
Let $d \geq 1$, and $\Lambda = \omega^{(1)} \cdot \Z^{\nu_1} \times \ldots \times \omega^{(d)} \cdot \Z^{\nu_d} \subseteq \R^d$, with $b_i = \nu_i - 1$, and let $b = b_1+\ldots+b_d$. Let $m \geq 3$ for $d=1$ and $m \geq 2$ for $d \geq 2$, and $M_1 \geq M_2 \geq \ldots \geq M_m$. Let $s>b \big( \frac{1}{2} - \frac{1}{2m} \big) + \frac{d}{2} - \frac{d+2}{2m}$. Then the following estimate holds for some $\varepsilon > 0$, $\delta>0$:
\begin{equation}
\label{eq:MultilinearEstimate}
\| R_{M_1} u_1 \ldots R_{M_m} u_m \|_{L_t^2([0,T],\mathcal{L}^2_x(\R^d))} \lesssim T^\delta M_2^{2s} (M_3 \ldots M_m)^{s} \prod_{i=1}^m \| R_{M_i} u_i \|_{Y_T^0}.
\end{equation}
\end{proposition}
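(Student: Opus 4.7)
The strategy combines the single-function $L^{2m}_{t,x}$ Strichartz estimate from Theorem~\ref{thm:FixedTimeStrichartzGeneral} at the critical exponent $p = 2m$ with a multilinear almost-orthogonality decomposition that trades the unwanted $M_1$-loss for a second $M_2$-loss. The key observation is that $s^{*} = b\bigl(\tfrac{1}{2} - \tfrac{1}{2m}\bigr) + \tfrac{d}{2} - \tfrac{d+2}{2m}$ is precisely the derivative loss in Theorem~\ref{thm:FixedTimeStrichartzGeneral} at $p = 2m$, so the brute H\"older estimate $\|\prod u_i\|_{L^2_{t,x}} \leq \prod_{i=1}^m \|u_i\|_{L^{2m}_{t,x}}$ already delivers $\prod_{i=1}^m M_i^{s^{*}+\varepsilon}$. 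What remains is to exchange $M_1^{s^{*}}$ for an extra copy of $M_2^{s^{*}}$ by a multilinear version of the refinement underlying Proposition~\ref{prop:BilinearStrichartzEstimateSEQ}. Note that the hypothesis $m \geq 3$ for $d=1$ (respectively $m \geq 2$ for $d \geq 2$) is exactly $2m \geq p_d$, placing us in the decoupling regime in which $s^{*}$ has the announced form.

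Concretely, decompose $R_{M_1} u_1 = \sum_a Q^{a}_{M_2} R_{M_1} u_1$ over $M_2$-separated lattice points $a \in \Z^{\nu}$ with $|a| \sim M_1$, and set $f_a = Q^{a}_{M_2} R_{M_1} u_1 \cdot \prod_{i=2}^m R_{M_i} u_i$. Since each $R_{M_i} u_i$ for $i \geq 2$ has height $\leq M_i \leq M_2$, the Fourier coefficients of $f_a$ are supported on lattice points $n$ with $|n - a| \lesssim m M_2$. Thinning $\{a\}$ to be $C m M_2$-separated (at the cost of a combinatorial factor depending only on $m$ and $\nu$) the blocks $f_a$ have disjoint Fourier supports, and Plancherel in $\mathcal{L}^2_x$ together with integration in $t$ yields the multilinear almost orthogonality
\begin{equation*}
\Bigl\| \prod_{i=1}^m R_{M_i} u_i \Bigr\|_{L^2_{t,x}}^{2} \lesssim_m \sum_{a} \| f_a \|_{L^2_{t,x}}^{2}.
\end{equation*}
H\"older in $L^{2m}_{t,x}$ on each block, followed by Galilean invariance (Proposition~\ref{prop:GalileanInvariance}) applied to the $Q^{a}_{M_2}$-localized factor, the embedding $Y^0_T \hookrightarrow U^{2m}_\Delta$ together with the transfer principle, and Theorem~\ref{thm:FixedTimeStrichartzGeneral} at $p=2m$, then give
\begin{equation*}
\| f_a \|_{L^2_{t,x}} \lesssim T^{\delta} M_2^{s^{*}+\varepsilon} \| Q^{a}_{M_2} R_{M_1} u_1 \|_{Y^0_T} \prod_{i=2}^m M_i^{s^{*}+\varepsilon} \| R_{M_i} u_i \|_{Y^0_T}.
\end{equation*}
Squaring, summing in $a$, and invoking the orthogonality identity $\sum_a \| Q^{a}_{M_2} R_{M_1} u_1 \|_{Y^0_T}^{2} = \| R_{M_1} u_1 \|_{Y^0_T}^{2}$ collapses the $a$-sum and delivers \eqref{eq:MultilinearEstimate} after choosing $\varepsilon < s - s^{*}$.

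The two steps demanding care are, first, the multilinear almost orthogonality, where one must thin $\{a\}$ by a factor depending on $m$ and the lattice dimensions and track the resulting combinatorial overlap, and second, the extraction of the $T^{\delta}$ factor, which is not present in the fixed-time statement of Theorem~\ref{thm:FixedTimeStrichartzGeneral} and is most cleanly obtained by slightly oversampling in time, i.e.\ running the Strichartz estimate at $L^{q}_t \mathcal{L}^{2m}_x$ for $q$ slightly larger than $2m$ and paying the drop back to $L^{2m}_t$ by H\"older's inequality on $[0,T]$. The extra room needed for this oversampling is furnished by the strict inequality $s > s^{*}$, so both obstacles can be absorbed into the $\varepsilon$-loss already built into the statement.
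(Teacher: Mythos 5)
Your proposal is correct and follows essentially the same route as the paper: an almost-orthogonal decomposition of the highest-height factor into $Q^{a}_{M_2}$-blocks, H\"older into $L^{2m}_{t,x}$, then Theorem~\ref{thm:FixedTimeStrichartzGeneral} at $p=2m$ combined with Galilean invariance (Proposition~\ref{prop:GalileanInvariance}) and the transfer principle, with the $a$-sum collapsed by the orthogonality of the $Y^0_T$-norm. The only divergence is how the factor $T^{\delta}$ is produced --- you oversample in time ($L^{q}_t$ with $q>2m$ plus H\"older on $[0,T]$), whereas the paper interpolates the main bound with the trivial $L^2_t\mathcal{L}^2_x\times L^\infty_t\mathcal{L}^\infty_x$ estimate \eqref{eq:AuxMultiEstimateII}; both mechanisms are legitimate and spend the same slack $s>s^{*}$.
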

\begin{proof}
By convolution constraint and almost orthogonality, we can decompose the frequencies of maximal height $M_1$ into frequencies of height $M_2$ up to Galilean invariance. We write
\begin{equation*}
\begin{split}
&\quad \| R_{M_1} u_1 \ldots R_{M_m} u_m \|_{L_t^2([0,T],\mathcal{L}^2_x(\R^d))}^2 \\
&\lesssim \sum_{a:M_2-\text{separated}} \| Q_{M_2}^a R_{M_1} u_1 R_{M_2} u_2 \ldots R_{M_m} u_m \|^2_{L_t^2([0,T],\mathcal{L}^2_x(\R^d))}.
\end{split}
\end{equation*}
Now we apply H\"older's inequality
\begin{equation*}
\begin{split}
&\quad \| Q_{M_2}^a R_{M_1} u_1 \ldots R_{M_m} u_m \|_{L_t^2([0,T],\mathcal{L}^2_x(\R^d))} \\
&\leq \| Q_{M_2}^a R_{M_1} u_1 \|_{L_t^{2m}([0,T],\mathcal{L}^{2m}_x} \prod_{i=2}^m \| R_{M_i} u_i \|_{L_t^{2m}([0,T],\mathcal{L}^{2m}_x(\R^d))}
\end{split}
\end{equation*}
and next Theorem \ref{thm:FixedTimeStrichartzGeneral} together with Proposition \ref{prop:GalileanInvariance} and the transfer principle: This allows us to estimate the above by
\begin{equation}
\label{eq:AuxMultiEstimateI}
\begin{split}
&\quad \| Q_{M_2}^a R_{M_1} u_1 \|_{L_t^{2m}([0,T],\mathcal{L}^{2m}_x} \prod_{i=2}^m \| R_{M_i} u_i \|_{L_t^{2m}([0,T],\mathcal{L}^{2m}_x(\R^d))} \\
&\lesssim M_2^{2s} (M_3 \ldots M_m)^s \| Q_{M_2}^a R_{M_1} u_1 \|_{Y^0_T} \prod_{i=2}^m \| R_{M_i} u_i \|_{Y^0_T}.
\end{split}
\end{equation}
We interpolate with the following estimate, which is a consequence of H\"older's inequality:
\begin{equation}
\label{eq:AuxMultiEstimateII}
\begin{split}
&\quad \| R_{M_1} u_1 \ldots R_{M_m} u_m \|_{L_t^2([0,T],\mathcal{L}^2_x(\R^d))} \\
&\leq \| R_{M_1} u_1 \|_{L_t^2([0,T],\mathcal{L}^2_x(\R^d))} \prod_{i=2}^d \| R_{M_i} u_i \|_{L_t^{\infty}([0,T],\mathcal{L}^\infty_x(\R^d))} \\
&\leq T^{\frac{1}{2}} (M_2 \ldots M_d)^{C} \prod_{i=1}^m \| R_{M_i} u_i \|_{Y_T^0}.
\end{split}
\end{equation}
Above $C=\nu_1+\ldots+\nu_d$.
Interpolation of \eqref{eq:AuxMultiEstimateI} and \eqref{eq:AuxMultiEstimateII} yields that there for any $\varepsilon>0$ there is $\delta>0$ such that
\begin{equation*}
\| R_{M_1} u_1 \ldots R_{M_m} u_m \|_{L_t^2([0,T],\mathcal{L}^2_x(\R^d))} \lesssim T^\delta M_2^{2s+\varepsilon} (M_3 \ldots M_m)^{s+\varepsilon} \prod_{i=1}^m \| R_{M_i} u_i \|_{Y_T^0}.
\end{equation*}
Redefining $s \to s+\varepsilon$ finishes the proof.
\end{proof}
\begin{remark}
The trilinear estimate for $d=1$ reads
\begin{equation*}
\| R_{M_1} u_1 R_{M_2} u_2 R_{M_3} u_3 \|_{L_t^2([0,T],\mathcal{L}^2_x(\R^d))} \lesssim T^\delta M_2^{2s} M_3^s \prod_{i=1}^3 \| R_{M_i} u_i \|_{Y_T^0}
\end{equation*}
for $s>\frac{b}{3}$. We remark that an alternative estimate is provided by
\begin{equation*}
\| R_{M_1} u_1 R_{M_2} u_2 R_{M_3} u_3 \|_{L_t^2([0,T],\mathcal{L}^2_x(\R^d))} \lesssim T^{\frac{1}{4}} M_2^{2r} M_3^{\frac{\nu}{2}} \prod_{i=1}^3 \| R_{M_i} u_i \|_{Y_T^0}
\end{equation*}
for $r > \frac{b}{4}$. If $M_3 \ll M_2$, then this estimate is more favorable than the one recorded above and will also give an improvement of the local well-posedness result for the quintic NLS. We omit the details since it does not appear to provide a sharp estimate.
\end{remark}

\subsection{Proof of local well-posedness}

Recall that $\Lambda = \omega \cdot \Z^{\nu} \subseteq \R$ with $\omega_i \in \R_{>0}$ linearly independent over $\Q$, and $b = \nu - 1$. We consider the Cauchy problem:
\begin{equation}
\label{eq:CubicNLSDetailed}
\left\{ \begin{array}{cl}
i \partial_t u + \partial_{x}^2 u &= \pm |u|^2 u, \quad (t,x) \in \R \times \R, \\
u(0) &= u_0 \in \mathcal{H}^s_{\Lambda}(\R).
\end{array} \right.
\end{equation}

We formulate a detailed version of Theorem \ref{thm:LWPNLS}:
\begin{theorem}[Local~well-posedness~of~NLS]
\label{thm:LWPNLSDetailed}
Let $s > \frac{b}{2}$. Then there is a real-analytic data-to-solution mapping for \eqref{eq:CubicNLSDetailed}
\begin{equation*}
S_T^s : \mathcal{H}^s_{\Lambda} \to Y^s_T \subseteq C([0,T],\mathcal{H}^s_{\Lambda}), \quad u_0 \mapsto u,
\end{equation*}
for some $T=T(s,\| u_0 \|_{\mathcal{H}^s_\Lambda})$, which is lower semi-continuous, decreasing in the second variable, and satisfies  $\liminf_{a \to 0} T(s,a) \gtrsim 1$.
\end{theorem}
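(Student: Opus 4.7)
The plan is to solve the Duhamel equation
\begin{equation*}
u(t) = e^{it\partial_x^2} u_0 \mp i\int_0^t e^{i(t-s)\partial_x^2}(|u|^2 u)(s)\, ds
\end{equation*}
by Picard iteration in a ball of $Y^s_T$. The linear piece is controlled by the embedding $\|\chi_T e^{it\partial_x^2}u_0\|_{Y^s}\lesssim \|u_0\|_{\mathcal{H}^s_\Lambda}$ recorded in the function-space package, so the whole argument reduces to producing a cubic estimate
\begin{equation*}
\|\chi_T\mathcal{I}(u_1\bar u_2 u_3)\|_{Y^s}\lesssim T^\delta\prod_{i=1}^{3}\|u_i\|_{Y^s_T}
\end{equation*}
for some $\delta>0$. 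Via the duality characterisation of $Y^s$, this is equivalent to the quadrilinear bound $|\int_0^T\!\langle u_1\bar u_2 u_3,\bar v\rangle_{\mathcal{L}^2_x}\,dt|\lesssim T^\delta \prod_{i=1}^{3}\|u_i\|_{Y^s_T}\|v\|_{Y^{-s}}$ uniformly in test functions $v\in Y^{-s}$ of unit norm.

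I would decompose each factor dyadically in height, $u_i=\sum_{C_i}R_{C_i}u_i$ and $v=\sum_{C_4}R_{C_4}v$. The mean pairing on $\mathcal{L}^2_x$ vanishes unless $n_1-n_2+n_3=n_4$ in $\Z^\nu$, which by non-resonance of $\omega$ forces $C_4\lesssim \max(C_1,C_2,C_3)$. Sorting so that $C_1\ge C_2\ge C_3$, I apply Cauchy--Schwarz in a bilinear pairing chosen so that each bilinear factor contains one of the two smallest heights:
\begin{equation*}
\Big|\!\int\!\!\int R_{C_1}u_1\,\overline{R_{C_2}u_2}\,R_{C_3}u_3\,\overline{R_{C_4}v}\Big|\le \|R_{C_1}u_1 R_{C_3}u_3\|_{L^2_t\mathcal{L}^2_x}\,\|R_{C_2}u_2 R_{C_4}v\|_{L^2_t\mathcal{L}^2_x}.
\end{equation*}
Proposition~\ref{prop:BilinearStrichartzEstimateSEQ} applied twice produces the dyadic constant $T^{1/2}C_3^{b/2+\varepsilon}\min(C_2,C_4)^{b/2+\varepsilon}$ multiplied by $\prod_i\|R_{C_i}u_i\|_{Y^0_T}\|R_{C_4}v\|_{Y^0_T}$, and passage to $Y^s_T/Y^{-s}_T$ introduces a further weight $C_1^{-s}C_2^{-s}C_3^{-s}C_4^{+s}$.

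The critical step is the dyadic summation, which splits into two geometric regimes. In the \emph{dominant} regime $C_4\sim C_1$ one has $\min(C_2,C_4)=C_2$, leaving the factor $C_2^{b/2-s+\varepsilon}C_3^{b/2-s+\varepsilon}$, which is summable over $C_3\le C_2\le C_1$ precisely when $s>b/2$. In the \emph{off-diagonal} regime $C_4\ll C_1$, the resonance $n_1-n_2+n_3=n_4$ forces the two largest heights to satisfy $C_1\sim C_2$; summing $C_4^{b/2+s+\varepsilon}$ up to $C_1$ then yields $C_1^{b/2-s+\varepsilon}C_3^{b/2-s+\varepsilon}$, summable under the same condition. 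A final Cauchy--Schwarz in the remaining dyadic indices against the orthogonality $\|u\|_{Y^s_T}^2=\sum_{C}\|R_Cu\|_{Y^s_T}^2$ closes the trilinear estimate with a positive power of $T$.

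With this estimate in hand, the contraction mapping principle in a ball of $Y^s_T$ of radius comparable to $\|u_0\|_{\mathcal{H}^s_\Lambda}$ yields a unique solution whenever $T^\delta\|u_0\|_{\mathcal{H}^s_\Lambda}^2$ is small enough; this produces $T=T(s,\|u_0\|_{\mathcal{H}^s_\Lambda})$ decreasing in its second argument with $\liminf_{a\to 0}T(s,a)\gtrsim 1$, and lower semi-continuity follows from standard stability of the fixed point under perturbations of $u_0$. Real analyticity of $u_0\mapsto u$ is automatic since the Picard map is polynomial in $(u,\bar u)$. The main obstacle I anticipate is the off-diagonal regime $C_4\ll C_1$: the use of the quasi-periodic resonance relation to conclude $C_1\sim C_2$ is essential, for otherwise the surplus $C_4^{b/2+s+\varepsilon}$ cannot be reabsorbed, and a naive application of the trilinear estimate in Proposition~\ref{prop:MultilinearEstimate} would not close the contraction at the threshold $s>b/2$.
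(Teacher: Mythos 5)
Your proposal is correct and follows essentially the same route as the paper: duality in $Y^s$, dyadic decomposition in heights, a case split according to whether the dual height is comparable to or much smaller than the maximal height (the latter forcing the two largest heights to coincide by the convolution constraint), two applications of the bilinear estimate of Proposition \ref{prop:BilinearStrichartzEstimateSEQ}, dyadic summation under $s>\frac{b}{2}$, and then the Bejenaru--Tao quantitative contraction scheme. The only differences are cosmetic bookkeeping (you keep $v\in Y^{-s}$ and track the weights $C_1^{-s}C_2^{-s}C_3^{-s}C_4^{s}$, while the paper extracts $C^s$ and tests against $Y^0_T$; your bilinear pairings differ from the paper's in labeling but yield the same gains $C_{\mathrm{min}}^{b/2+\varepsilon}$ in each regime).
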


\begin{proof}[Proof~of~Theorem~\ref{thm:LWPNLSDetailed}]
We obtain local-in-time solutions as fixed points of the integral equation for $s>\frac{b}{2}$ given by
\begin{equation*}
\Phi_{u_0}: Y_T^s \to Y_T^s, \quad u(t) =  \chi_T(t) e^{it \partial_x^2} u_0 - i \chi_T(t) \int_0^t e^{i(t-s) \partial_x^2} (|u|^2 u )(s) ds.
\end{equation*}
$T$ will be chosen as $T=T(\| u_0 \|_{\mathcal{H}_{\Lambda}^s})$. Details regarding the application of the contraction mapping principle can be found in the exposition on \emph{quantitative well-posedness} by Bejenaru--Tao \cite{BejenaruTao2006}.

\smallskip

Note that for $u_0 \in \mathcal{H}^s_{\Lambda}$ we have
\begin{equation*}
\| e^{it \partial_x^2} u_0 \|_{L_T^\infty \mathcal{H}^s_x} \lesssim \|u_0 \|_{\mathcal{H}^s_x}.
\end{equation*}
So it remains to establish a trilinear estimate (cf. \cite{BejenaruTao2006}), which is the content of the following proposition:
\begin{proposition}
\label{prop:TrilinearEstimateNLS}
Let $s > \frac{b}{2}$. Then the following trilinear estimate holds for $s \geq s' > \frac{b}{2}$, and $T \in (0,1]$:
\begin{equation}
\label{eq:TrilinearEstimateNLS}
\big\|  \chi_T(t) \int_0^t e^{i(t-s) \partial_x^2} (u_1 \overline{u_2} u_3)(s) ds \big\|_{Y_T^s} \lesssim T^{\frac{1}{4}} \| u_1 \|_{Y^s_T} \| u_2 \|_{Y^{s'}_T} \| u_3 \|_{Y^{s'}_T}.
\end{equation}
\end{proposition}
\begin{proof}
To this end, we use duality to write
\begin{equation*}
\big\|  \chi_T(t) R_C \int_0^t e^{i(t-s) \partial_x^2} (u_1 \overline{u_2} u_3)(s) ds \big\|_{Y_T^s} \lesssim C^s \sup_{\| v \|_{Y_T^0} = 0} \iint \overline{R_C v} R_C (u_1 \overline{u_2} u_3).
\end{equation*}
In the following we omit the complex conjugation, as it can be readily checked that the estimates do not depend on the conjugation. This follows from the invariance of the $L^4$-norm under complex conjugation.

We carry out a height frequency decomposition of $R_C(u_1 u_2 u_3)$: Write
\begin{equation*}
R_C(u_1 u_2 u_3) = \sum_{C_1,C_2,C_3} R_C( R_{C_1} u_1 R_{C_2} u_2 R_{C_3} u_3),
\end{equation*}
and it is easy to see that $C_{\max} = \max(C_1,C_2,C_3) \gtrsim C$ by otherwise impossible frequency interaction.

\medskip

We analyze two cases:\\
$(I)$ $C_{\max} \sim C$ and $(II)$ $C_{\max} \gg C$.

In case $(I)$ we suppose that $C_1 = C_{\max}$. Applying Proposition \ref{prop:BilinearStrichartzEstimateSEQ} twice yields
\begin{equation*}
\begin{split}
\iint R_C v R_{C_1} u_1 R_{C_2} u_2 R_{C_3} u_3 dx dt &\leq \| R_C v R_{C_2} u_2 \|_{L_t^2([0,T],\mathcal{L}^2_x)} \| R_{C_1} u_1 R_{C_3} u_3 \|_{L_t^2([0,T],\mathcal{L}^2_x)} \\
&\lesssim_\varepsilon T^{\frac{1}{4}} C_2^{\frac{1}{2}+\varepsilon} C_3^{\frac{1}{2}+\varepsilon} \| R_C v \|_{Y_T^0} \prod_{i=1}^3 \| R_{C_i} u_i \|_{Y^0_T}.
\end{split}
\end{equation*}
At this point \eqref{eq:TrilinearEstimateNLS} follows from dyadic summation in $C_2, C_3$ and square summation in $C \sim C_1$.

\smallskip

$(II)$ In this case we have $C_{\text{med}} = \max( \{C_1,C_2,C_3 \} \backslash C_{\max}) \sim C_{\max}$ by otherwise impossible frequency interaction. Suppose for definiteness that $C_1 \sim C_2 \sim C_{\max}$. Applying two bilinear Strichartz estimates from Proposition \ref{prop:BilinearStrichartzEstimateSEQ} gives
\begin{equation*}
\begin{split}
\iint R_C v R_{C_1} u_1 R_{C_2} u_2 R_{C_3} u_3 dx dt &\leq \| R_C v R_{C_1} u_1 \|_{L_t^2([0,T],\mathcal{L}^2_x)} \| R_{C_2} u_2 R_{C_3} u_3 \|_{L_t^2([0,T],\mathcal{L}^2_x)} \\
&\lesssim_\varepsilon T^{\frac{1}{4}} C^{\frac{1}{2}+\varepsilon} C_3^{\frac{1}{2}+\varepsilon} \| R_C v \|_{Y_T^0} \prod_{i=1}^3 \| R_{C_i} u_i \|_{Y^0_T}.
\end{split}
\end{equation*}
Summing in $C$, $C_3$ and square summation in $C_1 \sim C_2$ yields the claim.
\end{proof}
With the trilinear estimate at hand, quantitative well-posedness as described by Be\-je\-naru-\--Tao \cite{BejenaruTao2006} is immediate. The proof of Theorem \ref{thm:LWPNLSDetailed} is complete.
\end{proof}

\begin{remark}[Persistence~of~regularity]
It is a consequence of Proposition \ref{prop:TrilinearEstimateNLS} that for $s' \geq s$, and $u_0 \in \mathcal{H}^{s'}_\Lambda$ the solution exists up to $T=T(s,\| u_0 \|_{\mathcal{H}^s_{\Lambda}})$ in $C([0,T],\mathcal{H}^{s'}_\Lambda)$. We can also extend the persistence of regularity to exponential weights like in the previous works by Papenburg \cite{Papenburg2024} and Damanik \emph{et al.} \cite{DamanikLiXu2024A,DamanikLiXu2024B}:
\begin{equation*}
\| u_0 \|^2_{\mathcal{H}^{\kappa,\omega}_\Lambda} = \sum_{n \in \Z^{\nu}} e^{2 \kappa |n|} |\hat{u}_0(\langle n \rangle_\omega)|^2.
\end{equation*}
So, we conclude that for real analytic initial data with $\| u_0 \|_{\mathcal{H}^{\kappa,\omega}_\Lambda} < \infty$ the solution persists to be real-analytic with the same exponential Fourier decay.
\end{remark}

We formulate a detailed local well-posedness result for the nonlinear Schr\"odinger equation with algebraic nonlinearity with quasi-periodic initial data in $\R^d$ with $m \geq 2$:
\begin{equation}
\label{eq:PowerNLSDetailed}
\left\{ \begin{array}{cl}
i \partial_t u + \partial_{x}^2 u &= \pm |u|^{2(m-1)} u, \quad (t,x) \in \R \times \R^d, \\
u(0) &= u_0 \in \mathcal{H}^s_\Lambda(\R^d).
\end{array} \right.
\end{equation}
Let $\Lambda$ be like in \eqref{eq:HigherDimensionalLattice} with $b_i = \nu_i-1$ and density parameter $b=b_1+\ldots+b_d$.

\smallskip

\begin{theorem}
\label{thm:LWPGenearlNLSDetailed}
Let $m \geq 3$ for $d=1$, and $m \geq 2$ for $d \geq 2$. Let $s^* = 2b \big( \frac{1}{2} - \frac{1}{2m} \big) + \big( d - \frac{d+2}{m} \big)$ and $s>s^*$. Then there is a real-analytic data-to-solution mapping for \eqref{eq:PowerNLSDetailed}
\begin{equation*}
S_T^s : \mathcal{H}^s_\Lambda(\R) \to Y_T^s \subseteq C([0,T],\mathcal{H}^s_\Lambda), \quad u_0 \mapsto u,
\end{equation*}
with $T=T(s,\| u_0 \|_{\mathcal{H}^s_\Lambda})$, which is lower semi-continuous, decreasing,  and satisfies \\ $\liminf_{a \to 0} T(s,a) \gtrsim 1$.
\end{theorem}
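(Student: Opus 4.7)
The proof follows the blueprint of Theorem \ref{thm:LWPNLSDetailed} verbatim: we set up the Picard iteration for
$$\Phi_{u_0}(u)(t) = \chi_T(t) e^{it \Delta} u_0 - i \chi_T(t) \int_0^t e^{i(t-s) \Delta}(|u|^{2(m-1)} u)(s) \, ds$$
in $Y_T^s$ with $T = T(s, \|u_0\|_{\mathcal{H}^s_\Lambda})$, and invoke Bejenaru--Tao \cite{BejenaruTao2006} to convert the requisite analytic multilinear estimate into quantitative well-posedness, real-analytic dependence on the initial data, and the continuity properties of the lifespan. Everything therefore reduces to a $(2m-1)$-linear estimate
$$\bigg\| \chi_T(t) \int_0^t e^{i(t-s) \Delta}\prod_{j=1}^{2m-1} u_j^{\sharp_j}(s)\, ds \bigg\|_{Y_T^s} \lesssim T^\delta \prod_{j=1}^{2m-1} \|u_j\|_{Y_T^s}, \qquad s > s^*,$$
where $u_j^{\sharp_j}$ denotes $u_j$ or its complex conjugate; the conjugations play no role below, since the norms involved are invariant under them.

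By duality against $v$ with $\|v\|_{Y_T^{-s}} \le 1$ and dyadic localization of every factor, this reduces to controlling
$$C^s \bigg| \int_0^T \!\!\int_{\R^d} R_C v \cdot \prod_{j=1}^{2m-1} R_{C_j} u_j \, dx \, dt \bigg|,$$
which is then summed against the weights $\prod_j C_j^{-s}$ coming from $\|R_{C_j} u_j\|_{Y_T^0} \lesssim C_j^{-s} \|u_j\|_{Y_T^s}$. The convolution constraint forces $C \lesssim \max_j C_j$, producing the two familiar cases \textbf{(I)} $C_{\max} \sim C$ and \textbf{(II)} $C_{\max} \gg C$; in the latter, the two largest $C_j$ are automatically comparable. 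In both cases the plan is to partition the $2m$ factors $\{v, u_1, \ldots, u_{2m-1}\}$ into two groups of $m$ each, chosen so that the two largest heights (among $C, C_1, \ldots, C_{2m-1}$) lie in \emph{different} groups. Cauchy--Schwarz in $L^2_t \mathcal{L}^2_x$ then factors the integral into a product of two $L^2_t \mathcal{L}^2_x$-norms of $m$-fold products, each of which we bound by Proposition \ref{prop:MultilinearEstimate} at any $s' > b\big(\tfrac{1}{2} - \tfrac{1}{2m}\big) + \tfrac{d}{2} - \tfrac{d+2}{2m} = s^*/2$.

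The decisive structural feature of Proposition \ref{prop:MultilinearEstimate} is that the \emph{largest} height within each group enters with exponent zero; only the second-highest carries the double weight $M_{(2)}^{2s'}$ and the lower ranks the single weight $M_{(\cdot)}^{s'}$. Because the two global maxima are placed in different groups, no exponent loss falls on the two largest dyadic frequencies. The hypothesis $s > s^* = 2 \cdot (s^*/2)$ then permits the choice of $s' \in (s^*/2, s/2)$, so that, after combining with the global weight $C^s \prod_j C_j^{-s}$, every surviving exponent ($2s' - s$ on a second-highest, $s' - s$ on a lower rank, $-s$ on a free maximum, summed against $C \lesssim C_{\max}$) is strictly negative and the dyadic sums are geometric. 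The main obstacle is the combinatorial verification that, across all orderings of $(C, C_1, \ldots, C_{2m-1})$, a partition with the stated separation property exists and yields negative exponents on every dyadic index; this is a straightforward generalization of the two-case bookkeeping carried out in the proof of Proposition \ref{prop:TrilinearEstimateNLS}. With the $(2m-1)$-linear estimate in place, persistence of regularity, real-analyticity of the data-to-solution map, and continuity of $T(s, \cdot)$ follow exactly as in Theorem \ref{thm:LWPNLSDetailed}.
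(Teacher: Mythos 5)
Your proposal is correct and follows essentially the same route as the paper: duality against $v \in Y^0_T$, dyadic height decomposition, the two cases $C_{\max}\sim C$ and $C_{\max}\sim C_{\text{sec}}\gg C$, and a split of the $2m$ factors into two groups of $m$ with the two largest heights separated so that each escapes the weight in Proposition \ref{prop:MultilinearEstimate} (the paper's exponent $r>b(\tfrac12-\tfrac{1}{2m})+\tfrac{d}{2}-\tfrac{d+2}{2m}=s^*/2$ is exactly your $s'$), followed by dyadic summation as in Proposition \ref{prop:TrilinearEstimateNLS}. The only cosmetic difference is that on the diagonal $C\sim C_{\max}$ in case (I) the net exponent is zero rather than strictly negative, and summability there comes from Cauchy--Schwarz over comparable dyadic blocks exactly as in the cubic case you cite, which is what the paper does.
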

\begin{proof}
With the linear estimate following like above, for the proof it suffices to show the multilinear estimate with $s \geq s' > s^*$:
\begin{equation*}
\big\| \int_0^t e^{i(t-s) \partial_x^2} (u_1 \overline{u}_2 \ldots u_{2m-3} \overline{u}_{2m-2} u_{2m-1} ) dx ds \big\|_{Y_T^s} \lesssim T^\delta \| u_1 \|_{Y^s_T} \prod_{i=2}^{2m-1} \| u_i \|_{Y^{s'}_T}.
\end{equation*}
Invoking duality, we find
\begin{equation*}
\begin{split}
&\quad \big\| R_C \int_0^t e^{i(t-s) \partial_x^2} (u_1 \overline{u}_2 \ldots \overline{u}_{2m-2} u_{2m-1} ) ds \big\|_{Y^s_T} \\
&\lesssim C^s \sup_{\| v \|_{Y^0_T} = 1} \iint R_C \overline{v} R_C (u_1 \overline{u}_2 \ldots u_{2m-1} ) dx ds.
\end{split}
\end{equation*}
We omit the complex conjugation in the following to ease notation as the estimates do not depend on this. We decompose
\begin{equation*}
u_i = \sum_{C_i \in 2^{\N_0}} R_{C_i} u_i
\end{equation*}
and respecting the convolution constraint, it suffices to show multilinear estimates for
\begin{equation*}
\iint R_C v R_{C_1} u_1 \ldots R_{C_{2m-1}} u_{2m-1} dx ds.
\end{equation*}
Let $C_1^* \geq C_2^* \geq \ldots \geq C_{2m-1}^*$ denote a decreasing rearrangement. We need to consider the two cases $C_1^* \sim C$ $(I)$, and $C_1^* \sim C_2^* \gg C$ $(II)$. We turn to the estimate of $(I)$: By applying H\"older's inequality and Proposition \ref{prop:MultilinearEstimate} it follows that
\small
\begin{equation*}
\begin{split}
&\quad \big| \iint R_C v R_{C_1} u_1 \ldots R_{C_{2m-1}} u_{2m-1} dx ds \big| \\
&\leq \| R_C v R_{C_2^*} u_2^* R_{C_3^*} u_3^* \ldots R_{C_m^*} u_m^* \|_{L_T^2 \mathcal{L}^2_x} \| R_{C_1^*} u_1^* R_{C_{m+1}^*} u_{m+1}^* \ldots R_{C_{2m-1}^*} u_{2m-1}^* \|_{L_T^2 \mathcal{L}^2_x} \\
&\lesssim T^{2 \delta} (C_2^*)^{2r} (C_3^* \ldots C_m^*)^r (C_{m+1}^*)^{2r} (C_{m+2}^* \ldots C_{2m-1}^*)^r \| R_C v \|_{Y^0_T} \prod_{i=1}^{2m-1} \| R_{C_i} u_i \|_{Y_T^0}
\end{split}
\end{equation*}
\normalsize
for $r > b \big( \frac{1}{2} - \frac{1}{2m} \big) + \frac{d}{2} - \frac{d+2}{2m}$. The claim follows then from straight-forward dyadic summation.

We turn to Case $(II)$: In the following denote with $C_1^{**} \geq C_2^{**} \geq \ldots C_{2m}^{**}$. We use again H\"older's inequality to obtain
\small
\begin{equation*}
\begin{split}
&\quad \big| \iint R_C v R_{C_1} u_1 \ldots R_{2m-1} u_{2m-1} dx ds \big| \\
&\leq \| R_{C_1^*} u_1^* R_{C_3^*} u_3^* \ldots R_{C_{m+1}^*} u_{m+1}^* \|_{L_T^2 \mathcal{L}^2_x} \| R_C v R_{C_2^*} u_2^* R_{C_{m+2}^*} u_{m+2}^* \ldots R_{C_{2m-1}^*} u_{2m-1}^* \|_{L_T^2 \mathcal{L}^2_x} \\
&\lesssim T^{2\delta} (C_3^{**})^{2r} (C_4^{**})^{2r} (C_5^{**} \ldots C_{2m}^{**})^r \| R_C v \|_{Y^0_T} \prod_{i=1}^{2m-1} \| R_{C_i} u_i \|_{Y^0_T}.
\end{split}
\end{equation*}
\normalsize
With this estimate at hand, the claim is immediate from dyadic summation. The proof is complete.
\end{proof}

\section{Examples}
\label{section:Examples}
In this section we supplement the Strichartz estimates and the local well-posedness result for the cubic nonlinear Schr\"odinger equation on $\Lambda \subseteq \R^d$, $d \in \{1,2\}$, with examples, which show sharpness of the regularity up to endpoints. For simplicity, suppose that the coefficients are of unit length $\omega^{(j)}_i \in (1/2,1]$. 

\subsection{Sharpness of Strichartz estimates on finite times}

The examples are variations of a common theme: For almost-periodic functions at given height $|n| \sim C$, there are many frequencies $|\langle n \rangle_\omega | \lesssim 1$, which are consequently not strongly oscillating for finite times. We only consider the one-dimensional case $\Lambda = \omega \cdot \Z^{\nu}$. The generalization of the $\mathcal{L}^4$-estimate to the case $\Lambda \subseteq \R^2$ is straight-forward.
\begin{proposition}
Let $p \in \{4,6\}$. The estimate
\begin{equation}
\label{eq:NecessaryStrichartz}
\| e^{it \partial_x^2} R_C f \|_{L_t^p([0,1],\mathcal{L}^p_x)} \lesssim C^s \| f \|_{\mathcal{L}^2_x}
\end{equation}
fails for $s<(\nu-1) \big( \frac{1}{2}-\frac{1}{p} \big)$.
\end{proposition}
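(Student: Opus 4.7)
\medskip
\noindent\textbf{Proof plan.} The plan is to construct a Knapp/Dirichlet-type counterexample exploiting the density estimate from the introduction. I would first fix a small absolute constant $\eta>0$ (to be determined by $p$ in the final step). Among the $\sim C^{\nu}$ lattice points $n\in\Z^{\nu}$ with $|n|\sim C$, the image $\langle n\rangle_\omega$ lies in an interval of length $\lesssim C$, so pigeonholing over $\sim C/\eta$ sub-intervals of length $\eta$ would produce an interval $I\subset\R$ and a subset
\[
A \subseteq \{n\in\Z^{\nu} : |n|\sim C,\;\langle n\rangle_\omega \in I\}, \qquad |A|\gtrsim_\eta C^{\nu-1},
\]
with the matching upper bound coming from the paper's density estimate. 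Then set $f(x)=\sum_{n\in A} e^{i\langle n\rangle_\omega x}$, so that $R_C f = f$, $\sigma(f)\subseteq\Lambda$, and $\|f\|_{\mathcal{L}^2_x}=|A|^{1/2}\sim C^{(\nu-1)/2}$.

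\medskip
For $p=2k\in\{4,6\}$, expanding $|e^{it\partial_x^2}f|^{2k}$ and applying the mean on $\R$ should give
\[
\|e^{it\partial_x^2}f\|_{\mathcal{L}^{2k}_x}^{2k} \;=\; \sum_{\substack{(\vec n,\vec m)\in A^k\times A^k \\ n_1+\cdots+n_k \,=\, m_1+\cdots+m_k}} e^{-it\Phi(\vec n,\vec m)},
\]
with $\Phi=\sum_i\langle n_i\rangle_\omega^2-\sum_j\langle m_j\rangle_\omega^2$. Letting $\bar\lambda$ denote the center of $I$ and setting $\mu_i=\langle n_i\rangle_\omega-\bar\lambda$, $\nu_j=\langle m_j\rangle_\omega-\bar\lambda$ (each of absolute value $\leq\eta/2$), the resonance constraint $\sum n_i=\sum m_j$ forces $\sum\mu_i=\sum\nu_j$, killing the cross term $2\bar\lambda(\sum\mu_i-\sum\nu_j)$ and reducing $\Phi$ to $\sum_i\mu_i^2-\sum_j\nu_j^2$ with $|\Phi|\leq k\eta^2/2$. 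Choosing $\eta$ so small that $k\eta^2/2\leq 1/2$ would make each $t$-integral $\int_0^1\cos(t\Phi)\,dt=\sin(\Phi)/\Phi\geq 2\sin(1/2)>0$ uniformly in $(\vec n,\vec m)$, so
\[
\|e^{it\partial_x^2}f\|_{L^p_t([0,1],\mathcal{L}^p_x)}^p \;\gtrsim\; E_k(A), \qquad E_k(A):=\#\bigl\{(\vec n,\vec m)\in A^{2k}:\textstyle\sum_i n_i=\sum_j m_j\bigr\}.
\]

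\medskip
Finally, the $k$-fold sumset $kA$ lies in $\{n\in\Z^{\nu}:|n|\lesssim C,\;\langle n\rangle_\omega\in kI\}$ with $|kI|=k\eta\leq 1$, so the same density estimate gives $|kA|\lesssim C^{\nu-1}$. Cauchy--Schwarz on the representation function of $kA$ then yields $E_k(A)\geq|A|^{2k}/|kA|\gtrsim C^{(2k-1)(\nu-1)}$, and consequently $\|e^{it\partial_x^2}R_C f\|_{L^p_t L^p_x}\gtrsim C^{(\nu-1)(1-1/p)}$. Dividing by $\|f\|_{\mathcal{L}^2_x}\sim C^{(\nu-1)/2}$ would force $s\geq(\nu-1)(1/2-1/p)$, so \eqref{eq:NecessaryStrichartz} must fail for any strictly smaller $s$ upon taking $C\to\infty$. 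The main obstacle I anticipate is the additive-energy lower bound, which requires the density estimate to be used twice -- once to produce many lattice points inside $A$, and once to control the sumset $kA$ -- with Cauchy--Schwarz bridging the two; the phase control, by contrast, is a clean consequence of the resonance condition once the frequencies are tightly clustered.
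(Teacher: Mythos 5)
Your proposal is correct and takes essentially the same route as the paper: both arguments use $\sim C^{\nu-1}$ frequencies of height $\sim C$ whose values $\langle n \rangle_\omega$ cluster in an interval of bounded length, note $\| f \|_{\mathcal{L}^2_x} \sim C^{\frac{\nu-1}{2}}$, and bound the $L^p$-norm from below by counting solutions of the convolution constraint $\sum_i n_i = \sum_j m_j$ (which, by non-resonance, is forced in $\Z^{\nu}$). The differences are only in execution: the paper builds the cluster explicitly near the zero frequency (choosing $n_\nu$ so that $|\langle n \rangle_\omega| \lesssim 1$), so the additive energy $\sim C^{(2k-1)(\nu-1)}$ is read off directly from the box structure, whereas you produce the cluster by pigeonholing and lower-bound the energy via Cauchy--Schwarz against the sumset $kA$ together with the counting bound, and your cancellation of the linear phase term through $\sum \mu_i = \sum \nu_j$ makes the control of the time integral more explicit than the paper's ``no strong oscillation'' shortcut.
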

\begin{proof}
We consider
\begin{equation*}
f = \sum_{\substack{|n| \sim C, \\ |\langle n \rangle_\omega| \lesssim 1}} e^{i \langle n \rangle_\omega x}.
\end{equation*}
With $\langle n \rangle_\omega = n \cdot \omega$, $\omega_i \neq 0$, for any $n_1, \ldots, n_{\nu-1} \sim C$, we can choose $n_{\nu} \in \Z$ such that $|\langle n \rangle_\omega| \leq  1$. Note that $|n_d| \sim C$. Clearly, $\| f \|_{\mathcal{L}^2_x} \sim C^{\frac{\nu-1}{2}}$. We compute for $g = \sum_{n \in \Z^{\nu}} e^{i \langle n \rangle_\omega x} a_n$ that
\begin{equation*}
\| g \|^4_{\mathcal{L}^4_x} = \sum_{n \in \Z^{\nu}} \sum_{\substack{n_1,n_3, \\ n= n_1+n_2=n_3+n_4}} a_{n_1} a_{n_2} \overline{a}_{n_3} \overline{a}_{n_4}.
\end{equation*}
Consequently,
\begin{equation*}
\| f \|^4_{\mathcal{L}^4_x} \sim C^{3(\nu-1)}.
\end{equation*}
This shows that \eqref{eq:NecessaryStrichartz} for $p=4$
can only be valid for $s \geq \frac{\nu-1}{4}$. This establishes the claim for $p=4$.

\smallskip

Moreover,
\begin{equation*}
\| g \|^6_{\mathcal{L}^6_x} = \sum_{n \in \Z^{\nu}} \sum_{\substack{n_1,n_2,n_4,n_5, \\ n= n_1+n_2+n_3=n_4+n_5+n_6}} a_{n_1} a_{n_2} a_{n_3} \overline{a}_{n_4} \overline{a}_{n_5} \overline{a}_{n_6}.
\end{equation*}
For this reason,
\begin{equation*}
\| f \|_{\mathcal{L}^6_x}^6 \sim C^{5(\nu-1)}.
\end{equation*}
This shows that \eqref{eq:NecessaryStrichartz} for $p=6$ can only be true for $s \geq \frac{\nu-1}{3}$. The proof is complete.
\end{proof}

\subsection{Ill-posedness of the nonlinear Schr\"odinger equation}

We consider the cubic nonlinear Schr\"odinger equation on $\Lambda \subseteq \R^d$, $d \in \{1,2\}$. We let $\Lambda = \omega^{(1)} \cdot \Z^{\nu_1} \times \omega^{(2)} \cdot \Z^{\nu_2}$ and $b=(\nu_1-1)+(\nu_2-1)$.
\begin{proposition}
The first Picard iterate 
\begin{equation}
\label{eq:FirstPicardIterate}
\big\| \int_0^t e^{i(t-s) \Delta} \big( |e^{is \Delta} f|^2 e^{is \Delta} f \big) ds \big\|_{C_T \mathcal{H}^s_\Lambda} \lesssim \| f \|^3_{\mathcal{H}^s_\Lambda}.
\end{equation}
is unbounded for $s< \frac{b}{2}$.
\end{proposition}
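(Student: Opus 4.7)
My plan is to exhibit a sequence $f_C \in \mathcal{H}^s_\Lambda$ along which the ratio of the first Picard iterate's $\mathcal{H}^s_\Lambda$-norm to $\|f_C\|^3_{\mathcal{H}^s_\Lambda}$ diverges as $C \to \infty$, directly contradicting \eqref{eq:FirstPicardIterate}. The candidate is the same as for the Strichartz sharpness example in the preceding subsection: for $d=1$ I set
\begin{equation*}
S_C = \{ k \in \Z^{\nu} : |k| \sim C, \, |\langle k \rangle_\omega| \leq 1 \}, \qquad f_C(x) = \sum_{k \in S_C} e^{i \langle k \rangle_\omega x},
\end{equation*}
and take the product construction on the two factors of $\Lambda$ for $d=2$. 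The rank counting argument of Section~\ref{section:FixedTimeEstimates} gives $|S_C| \sim C^b$, so $\|f_C\|_{\mathcal{H}^s_\Lambda}^2 \sim C^{b+2s}$ and $\|f_C\|_{\mathcal{H}^s_\Lambda}^3 \sim C^{3b/2 + 3s}$.

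Next I would compute the Fourier coefficients of the Picard iterate directly. Expanding $|e^{is\Delta} f_C|^2 e^{is\Delta} f_C$ and integrating in $s$ yields
\begin{equation*}
\widehat{u_2}(\mu, t) = e^{-it|\mu|^2} \sum_{\substack{(k_1, k_2, k_3) \in S_C^3 \\ k_1 - k_2 + k_3 = k}} t \, e^{it\phi/2} \mathrm{sinc}(t\phi/2),
\end{equation*}
where $\mu = \langle k \rangle_\omega$ and $\phi = |\mu|^2 - |\langle k_1 \rangle_\omega|^2 + |\langle k_2 \rangle_\omega|^2 - |\langle k_3 \rangle_\omega|^2$; the constraint on the sum uses non-resonance of $\omega$ to translate the frequency identity $\mu_1 - \mu_2 + \mu_3 = \mu$ into $k_1 - k_2 + k_3 = k$. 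Every contributing triple has $|\langle k_i \rangle_\omega|, |\mu| = O(1)$, hence $|\phi| = O(1)$, and for $t$ a small absolute constant $\mathrm{Re}\bigl(e^{it\phi/2} \mathrm{sinc}(t\phi/2)\bigr) \geq 1/2$ uniformly. This gives the clean lower bound
\begin{equation*}
|\widehat{u_2}(\mu, t)| \geq \tfrac{t}{2} \, \#\{ (k_1, k_2, k_3) \in S_C^3 : k_1 - k_2 + k_3 = k \}.
\end{equation*}

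The heart of the proof is then an additive count for $S_C$. Since $S_C$ sits in a $(\nu-1)$-dimensional lattice slab intersected with the shell $|k| \sim C$ --- morally a discrete $(\nu-1)$-dimensional ball of radius $C$ --- its third additive energy satisfies $E_3(S_C) \gtrsim |S_C|^3$, with $\gtrsim |S_C|$ targets $k$ admitting $\gtrsim |S_C|^2 \sim C^{2b}$ representations as $k = k_1 - k_2 + k_3$. I would make this rigorous by restricting to a lattice box $\{ k : k_i \in [K, 2K], \; 1 \leq i \leq \nu-1 \}$ with $k_\nu$ chosen so that $|\langle k \rangle_\omega|$ lies in an $O(1)$ window, taking the target $k$ from a middle sub-box; a convolution count shows that a positive fraction of $(k_1, k_2) \in S_C^2$ yields $k_3 = k - k_1 + k_2 \in S_C$. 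Combining this with the earlier lower bound gives $\|u_2(t, \cdot)\|_{\mathcal{H}^s_\Lambda}^2 \gtrsim t^2 C^{5b + 2s}$, whence
\begin{equation*}
\frac{\|u_2\|_{C_T \mathcal{H}^s_\Lambda}}{\|f_C\|^3_{\mathcal{H}^s_\Lambda}} \gtrsim T \cdot C^{b - 2s},
\end{equation*}
which diverges as $C \to \infty$ whenever $s < b/2$. The step I expect to be the main obstacle is the additive count, in particular handling the mild drift $|\langle k_3 \rangle_\omega| \leq 3$ under $k_3 = k - k_1 + k_2$ with $|\langle k_i \rangle_\omega| \leq 1$, which is absorbed by a harmless enlargement of the $\omega$-tolerance in the definition of $S_C$; once this count is in place, the small-time expansion above precludes cancellation in $\widehat{u_2}$ and the contradiction closes.
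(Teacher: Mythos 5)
Your proposal is correct and takes essentially the same route as the paper: the same concentrated data $f=\sum_{|n|\sim C,\,|\langle n\rangle_\omega|\lesssim 1}e^{i\langle n\rangle_\omega x}$, the observation that the resonance phase is $O(1)$ so the Duhamel coefficients satisfy $|c(t,n)|\gtrsim t\cdot\#\{n=n_1-n_2+n_3\}$ for small $t$, and the count of $\gtrsim C^{b}$ output frequencies each with $\gtrsim C^{2b}$ representations, yielding the ratio $\gtrsim T\,C^{b-2s}$ and hence failure for $s<\frac{b}{2}$. The only difference is that you make explicit the additive counting and the tolerance enlargement needed for $|\langle k_3\rangle_\omega|\leq 3$, a step the paper asserts without detail.
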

\begin{proof}
The choice of data is the same as in the previous section. We have
\begin{equation*}
\big( \big| e^{is \Delta} f \big|^2 e^{is \Delta} f \big) \widehat (n) = \sum_{n=n_1-n_2+n_3} e^{is (\langle n_1 \rangle_\omega^2 - \langle n_2 \rangle_\omega^2 + \langle n_3 \rangle_\omega^2)},
\end{equation*}
which gives
\begin{equation*}
\begin{split}
&\quad \int_0^t e^{i(t-s)\Delta} \big| e^{is \Delta} f \big|^2 e^{is \Delta} f ds \\ &= \sum_n e^{i t \langle n \rangle_\omega^2} e^{i x \langle n \rangle_\omega} \underbrace{\sum_{n=n_1-n_2+n_3} \int_0^t e^{is(\langle n_1 \rangle_\omega^2 - \langle n_2 \rangle^2_\omega + \langle n_3 \rangle_\omega^2 - \langle n \rangle_\omega^2)} ds}_{c(t,n)}.
\end{split}
\end{equation*}
Let $\Omega(n_1,n_2,n_3) = \langle n_1 \rangle_\omega^2 - \langle n_2 \rangle^2_\omega + \langle n_3 \rangle_\omega^2 - \langle n_1-n_2+n_3 \rangle_\omega^2$.
We compute the integral to be
\begin{equation*}
\int_0^t e^{is \Omega(n_1,n_2,n_3)} ds = 
\begin{cases}
\frac{e^{it \Omega} -1}{i \Omega}, &\quad \Omega \neq 0, \\
t, &\quad \Omega = 0.
\end{cases}
\end{equation*}
Since $|\Omega| \lesssim 1$, we have for $0 \leq t \leq c \ll 1$ the uniform estimate
\begin{equation*}
|c(t,n)| \gtrsim t \sum_{n=n_1-n_2+n_3} 1.
\end{equation*}
We have $|n_i| \sim C$. So there are $\gtrsim C^{b}$ output frequencies of height $C$ with coefficients satisfying the bound $|c(t,n)| \gtrsim t \cdot C^{2b}$. This gives
\begin{equation*}
\big\| \int_0^t e^{i(t-s) \Delta} \big( |e^{is \Delta} f|^2 e^{is \Delta} f \big) ds \big\|_{C_T \mathcal{H}^s_\Lambda} \gtrsim T C^s C^{2b} C^{\frac{b}{2}}.
\end{equation*}
Secondly, $\| f \|_{\mathcal{H}^s_\Lambda} \sim C^{\frac{b}{2}+s}$. Consequently, \eqref{eq:FirstPicardIterate} can only be true for $s \geq \frac{b}{2}$.
\end{proof}

We conclude with two remarks:
\begin{remark}
Note that it is actually enough to show sharpness of the local well-posedness result to infer sharpness of the $L^4$-Strichartz estimate. Indeed, as soon as there is an improved Strichartz estimate, the improvement of the local well-posedness theory follows readily following along the above arguments.  

Secondly, comparing with the periodic case, it is tempting to conjecture that the local well-posedness result is also sharp in the general quasi-periodic case $\Lambda \subseteq \R^d$ for $d \geq 3$. It remains unclear how to corroborate this by examples though.
\end{remark}

\begin{remark}[Quintic~NLS]
We remark that the same computation shows that the quintic nonlinear Schr\"odinger equation posed on $\Lambda = \omega \cdot \Z^{\nu}$
\begin{equation*}
\left\{ \begin{array}{cl}
i \partial_t u+ \partial_{x}^2 u &= |u|^4 u, \quad (t,x) \in \R \times \R, \\
u(0) &= u_0 \in \mathcal{H}^s_\Lambda
\end{array} \right.
\end{equation*}
can only be $C^3$-well-posed for $s > \frac{1}{2}$. The well-posedness result we showed for $s>\frac{2}{3}$ is actually not sharp: If the frequency distribution is given by
\begin{equation*}
A = |\iint R_{M_1} u_1 R_{M_2} u_2 R_{M_3} u_3 R_{M_4} u_4 R_{M_5} u_5 R_{M_6} u_6 dx dt|
\end{equation*}
with $M_1 \sim M_2 \gg M_3 \sim M_4 \gg M_5 \sim M_6$ the application of two trilinear estimates gives
\begin{equation*}
A \lesssim M_3^{2r} M_4^{2r} M_5^r M_6^r \prod_{i=1}^6 \| R_{M_i} u_i \|_{Y_T^0}.
\end{equation*}
for $r>\frac{1}{3}$. We have the alternative estimate from two bilinear $L^2$-Strichartz estimates:
\begin{equation*}
A \lesssim M_3^{2r} M_4^{2r} M_5 M_6 \prod_{i=1}^6 \| R_{M_i} u_i \|_{Y_T^0}.
\end{equation*}
So, choosing a threshold $M_5 \lesssim M_3^\alpha$ will allow us to improve the local well-posedness. But since we cannot reach $s>\frac{1}{2}$ as suggested by the example, we omit the details.
\end{remark}

\section*{Appendix: Quasiperiodic solutions to the KdV equation}

We remark that the above arguments to show local well-posedness for the nonlinear Schr\"odinger equation do not take advantage of the resonance.
In this section we consider the KdV with derivative nonlinearity:
\begin{equation}
\label{eq:KdVAppendix}
\left\{ \begin{array}{cl}
\partial_t u + \partial_x^3 u &= u \partial_x u, \quad (t,x) \in \R \times \R, \\
u(0) &= u_0 \in \mathcal{H}^s_\Lambda.
\end{array} \right.
\end{equation}
The systematic study of Strichartz estimates is the first cornerstone to show local well-posedness. The second ingredient already used by Bourgain \cite{Bourgain1993B} is to use large resonance to ameliorate the derivative loss. The resonance function quantifies to what extend free solutions can form a free solution again through the nonlinear interaction and reads for the KdV equation:
\begin{equation*}
\Omega(\xi_1,\xi_2) = (\xi_1+\xi_2)^3 - \xi_1^3 - \xi_2^3 = 3 (\xi_1 + \xi_2) \xi_1 \xi_2.
\end{equation*}
The Fourier restriction norm, or the refined adapted spaces, can recover the square root of the resonance function. Consider a frequency interaction with output frequency $N \in 2^{\Z}$ and input frequencies $N_1$, $N_2 \in 2^{\Z}$. Let $N_{\max} = \max(N,N_1,N_2)$, and $N_{\min} = \min(N,N_1,N_2)$. The resonance relation recovered above shows that for $|\xi_i| \sim N_i$ and $|\xi_1+\xi_2| \sim N$ we have 
\begin{equation*}
|\Omega(\xi_1,\xi_2)| \sim N_{\max}^2 N_{\min}.
\end{equation*}
The purpose of this section is devoted to explain how Tsugawa's result is a consequence of the Strichartz estimates at hand and the resonance analysis due to Bourgain \cite{Bourgain1993B}. This will help for future analysis of more involved models.

\smallskip

We also mention the recent result of Papenburg \cite{Papenburg2024} who showed unconditional local well-posedness for exponentially decaying Fourier coefficients for more general dispersive equations with derivative nonlinearity. However, the solutions were only verified to satisfy a weaker exponential Fourier decay.

\subsection{Local well-posedness in the periodic case}

The resonance relation highlights that interactions with low frequencies are problematic.
By subtracting the mean-value and derivative nonlinearity, we can suppose that the mean is identically vanishing for all times: $\hat{u}(t,0) = 0$. With the $L^4$-Strichartz estimates on the torus
\begin{equation*}
\| e^{t \partial_x^3} f \|_{L_t^4([0,T],L^4(\T))} \lesssim T^{\frac{1}{8}} \| f \|_{L^2(\T)}
\end{equation*}
it is nowadays standard to close the contraction mapping argument. 

 On the torus we define the function spaces\footnote{Here we use a Besov variant in the frequencies to simplify summability.}:
\begin{equation*}
\| u \|_{Y^s_T} = \sum_{n \in \Z \backslash 0} \langle n \rangle^s \| e^{it \langle n \rangle_\omega^3} \hat{u}(t,n) \|_{V^2_T}.
\end{equation*}
For the nonlinear estimate, it suffices to prove after invoking duality for $N_2 \lesssim N_1 \sim N$:
\begin{equation}
\label{eq:TrilinearEstimateTorus}
\big| \iint_{[0,T] \times \T} P_N v P_{N_1} u_1 P_{N_2} u_2 dx dt \big| \lesssim T^\delta N^{-1} \| P_N v \|_{Y^0_T} \| P_{N_1} u_1 \|_{Y^0_T} N_2^s \| P_{N_2} u_2 \|_{Y^0_T}.
\end{equation}
The high frequency gain will compensate the derivative loss.

To show \eqref{eq:TrilinearEstimateTorus}, we use the resonance relation. For one function, say $v$, we can suppose that the modulation is larger than the resonance:\footnote{We gloss over the technicality of considering finite times. Roughly speaking, for finite times modulations can be assumed to have a minimum size.}
\begin{equation*}
(\xi,\tau) \in \text{supp}(\hat{v}(\xi,\tau)) \Rightarrow | \tau - \xi^3 | \gtrsim N^2 N_2.
\end{equation*}

By H\"older's inequality and the $L^4$-Strichartz estimate we find
\begin{equation*}
\begin{split}
\iint_{[0,T] \times \T} P_{N} v P_{N_1} u_1 P_{N_2} u_2 dx dt &\leq \| P_N v \|_{L^2_{t,x}} \|P_{N_1} u_1 \|_{L^4_{t,x}} \| P_{N_2} u_2 \|_{L^4_{t,x}} \\
&\lesssim T^{\frac{1}{4}} (N^2 N_2)^{-\frac{1}{2}} \| P_N v \|_{Y^0_T} \prod_{i=1}^2 \| P_{N_i} u_i \|_{Y^0_T},
\end{split}
\end{equation*}
which is \eqref{eq:TrilinearEstimateTorus} for $s = -\frac{1}{2}$.

\subsection{Local well-posedness in the quasiperiodic case}

Next, we extend the argument to \eqref{eq:KdVAppendix} on $\Lambda = \omega \cdot \Z^{\nu}$, $\omega \in \R^{\nu}_{>0}$ non-resonant. Let $s_1,s_2 \in \R$. We define the norm to estimate solutions by
\begin{equation*}
\| u \|_{Y_T^{s_1,s_2}} = \sum_{n \in \Z^{\nu} \backslash 0} \langle n \rangle_\omega^{s_1} \langle n \rangle^{s_2} \| e^{it \langle n \rangle_\omega^3} \hat{u}(t,n) \|_{V^2_T}.
\end{equation*}
As already observed by Tsugawa, there is a significant difference with low frequencies compared to the periodic case. Removing the zero frequency in the periodic case is enough to ensure that the frequencies have at least unit modulus. This is clearly no longer true in the quasiperiodic case and suggests to penalize low frequencies by homogeneous Sobolev norms with negative regularity. Note that in the decaying case on the real line the very low frequencies can be estimated by their small measure and dispersive effects, both absent in the quasiperiodic case. 

The generalized trilinear estimate in the quasiperiodic case reads 
\begin{equation}
\label{eq:TrilinearEstimateQuasiperiodicKdV}
\begin{split}
&\quad \lim_{L \to \infty} \frac{1}{2L} \iint_{[0,T] \times [-L,L]} P_N R_C v P_{N_1} R_{C_1} u_1 P_{N_2} R_{C_2} u_2 dx dt \\
&\lesssim C_{\min}^{s_2} N_{\max}^{-1} N_{\min}^{s_1} \|P_N R_C v \|_{Y^0_T} \prod_{i=1}^2 \| P_{N_i} R_{C_i} u_i \|_{Y^0_T}.
\end{split}
\end{equation}
To effectively lower the height, we can carry out an almost orthogonal decomposition to localize also the maximal height to cubes of size $C_{\min}$ (compare Proposition \ref{prop:BilinearStrichartzEstimateSEQ}). With the following variant of the $L^4$-estimate
from \eqref{eq:FixedTimeL4StrichartzAiryIntro}, which is a consequence of the transfer principle, we find
\begin{equation*}
\| P_N Q_C^a u \|_{L_T^4 \mathcal{L}^4_x} \lesssim T^{\frac{1}{8}} C^{\frac{b}{4}} \| u \|_{Y^0_T}.
\end{equation*}
Then \eqref{eq:TrilinearEstimateQuasiperiodicKdV} follows like above after almost orthogonal decomposition into cubes of low height, applying H\"older's inequality and the $L^4$-Strichartz estimate recorded in the above display. This essentially recovers Tsugawa's result \cite[Theorem~1.1]{Tsugawa2012}. The $L^4$-estimate was implicitly proved in \cite[Lemma~3.1]{Tsugawa2012}. 

\subsection{Summary}

It is well-known since the works of Bourgain \cite{Bourgain1993A,Bourgain1993B} how to combine the resonance relation with Strichartz estimates. The resonance relation remains the same, and it remains to find substitutes for the Strichartz estimates in the quasi-periodic case. Above we have demonstrated how Strichartz estimates depending on the height follow from decoupling and counting arguments. We hope to apply this approach to further models.

\section*{Acknowledgements}

Financial support from the Humboldt foundation (Feodor-Lynen fellowship) and partial support by the NSF grant DMS-2054975 is gratefully acknowledged. Some of this work was carried out at Bielefeld University in June 2024, and I would like to thank Sebastian Herr and his working group for their kind hospitality.

\bibliographystyle{plain}

\end{document}